\newtheorem{theorem}{Theorem}[section]
\newtheorem{proposition}[theorem]{Proposition}
\newtheorem{corollary}[theorem]{Corollary}
\newtheorem{lemma}[theorem]{Lemma}
\theoremstyle{definition}
\newtheorem{remark}[theorem]{Remark}
\newtheorem{example}{Example}[section]
\def\!{\mathop{\mathrm{!}}}
\def\R{\mathbf{ R}}
\def\C{\mathcal{ C}}
\def\Pp{\mathbf{P}}
\newlength{\boxwidth}
\title{On the distribution of the number of internal equilibria in random evolutionary games}
\author[1]{Manh Hong Duong}
\author[2,3] {Hoang Minh Tran}
\author[4]{The Anh Han}
\affil[1]{Department of Mathematics, Imperial College London, London SW7 2AZ, 
UK.}
\affil[2]{Department of Industrial and Systems Engineering, Texas A\&M University, College Station, TX 77843. Email: tran@tamu.edu}
\affil[3]{School of Applied Mathematics and Informatics, Hanoi University of Science \& Technology, Hanoi, Vietnam}
\affil[4]{School of Computing, Media and Art, Teeside University, TS1 3BX, UK. Email: T.Han@tees.ac.uk}
\date{\today}
\begin{document}

\maketitle
\begin{abstract}
In this paper, we study the distribution of the number of internal equilibria of a multi-player two-strategy random evolutionary game. Using techniques from the random polynomial theory, we obtain a closed formula for the probability that the game has a certain number of internal equilibria. In addition, by  employing Descartes' rule of signs and combinatorial methods, we provide useful estimates for this probability. Finally, we also compare our analytical results with those obtained from samplings. 
\end{abstract}
\section{Introduction}
\subsection{Motivation}
Evolutionary Game Theory (EGT)~\cite{SP73} has become one of the most diverse and far reaching theories in biology finding applications in a plethora of disciplines such as ecology, population genetics, social sciences, economics  and computer science \cite{maynard-smith:1982to,axelrod:1984yo,hofbauer:1998mm,nowak:2006bo,broom2013game,Perc2010109,sandholm2010population,HanJaamas2016}. For example, in economics, EGT has been employed to make predictions in situations where traditional assumptions about agents' rationality and knowledge may not be justified \cite{friedman1998economic,sandholm2010population}. In computer science, EGT has been used extensively to model dynamics and emergent behaviour in multiagent systems \cite{tuyls2007evolutionary,HanBook2013}. Furthermore, EGT has provided explanations for the basic of cooperative behaviours which is one of the most well-studied and challenging interdisciplinary problems in science \cite{Pennisi93,hofbauer:1998mm,nowak:2006bo}. Of particular importance subclass in EGT is random evolutionary games  
in which the payoff entries are random variables. They are useful to model social and biological systems in which  very limited information is available, or where the environment changes so rapidly and frequently that one cannot predict the payoffs of their inhabitants \cite{may2001stability,fudenberg1992evolutionary,HTG12,gross2009generalized}.  In addition, as argued  in \cite{Galla2013}, even when random games are not  directly representative for real world scenarios, they are  valuable as a null hypothesis that can be used to sharpen our understanding of what makes real games special.

Similar to the foundational  concept of Nash equilibrium in classical game theory \cite{nash:1950ef}, the analysis of equilibrium points is of great importance in evolutionary game theory providing essential implications for understanding of complexity in a dynamical system, such as its behavioural, cultural or biological  diversity~\cite{broom:1997aa, broom:2003aa,gokhale:2010pn,HTG12, gokhale2014evolutionary, DH15, DuongHanJMB2016,Broom2016}. There are a considerable number of papers in the literature that study the number of equilibria, their stability and attainability in concrete strategic scenarios such as public goods games, see for example  \cite{broom:1997aa,Broom2000,Pacheco2009, Souza2009,Sasaki2015}. However, despite its importance, equilibrium properties in random games is far less understood with, to the best of our knowledge, only a few recent  efforts  \cite{gokhale:2010pn,HTG12, Galla2013, gokhale2014evolutionary, DH15, DuongHanJMB2016,Broom2016}. One of the most challenging problems in the study of equilibrium properties in random games is to characterise the distribution of the number of equilibria~\cite{gokhale:2010pn,HTG12}
\begin{center}
\textit{Can one compute the distribution of the number of (internal) equilibria in a $d$-player two-strategy random evolutionary game?}
\end{center}
In this paper, we address this question by providing closed formulas for the probability $p_m$ ($0\leq m\leq d-1$)that a $d$-player two strategy game has $m$ internal equilibria.

Using the replicator dynamics approach, to find an internal equilibrium in a $d$-player two-strategy game, one needs to solve the following polynomial equation for $y>0$ (see Equation  \eqref{eq: eqn for y} and its derivation in Section \ref{sec: replicator}),
\begin{equation}
\label{eq: P1}
P(y):=\sum\limits_{k=0}^{d-1}\beta_k\begin{pmatrix}
d-1\\
k
\end{pmatrix}y^k=0,
\end{equation}
where $\beta_k=a_k-b_k$, with $a_k$ and $b_k$ being  random variables representing  the entries of the game payoff matrix. In \cite{gokhale:2010pn,HTG12,gokhale2014evolutionary}, the authors provide both numerical and analytical results for games with  a small number of players ($d\leq 4$), focusing on the probability of attaining a maximal number of equilibrium points. These works use a direct approach by solving Equation \eqref{eq: P1},  expressing the positivity of its zeros as domains of conditions for the coefficients and then integrating over these domains to obtain the corresponding probabilities. However, in general, a polynomial of degree five or higher is not analytically solvable \cite{able:1824aa}. Therefore, it is impossible to extend the direct approach to the case of large number of players. In recent works~\cite{DH15, DuongHanJMB2016, DuongTranHan2017a} we have developed links between random evolutionary games and random polynomial theory~\cite{EK95} as well as the classical polynomial theory (particularly Legendre polynomials) employing techniques from the latter to study \textit{the expected number}, $E=\sum_{m=0}^{d-1} m p_m$, of internal equilibria. More specifically, we provided closed formulas for $E$, characterized its asymptotic limits as $d$ tends to infinity and investigated the effect of correlation in the case of correlated payoff entries.  The derivation of the individual probabilities $p_m$ ($0\leq m\leq d-1$) is harder than that of the expectation; however, it will provide more insights into the understanding of the equilibrium properties of the game such as the probabilities of having no/unique/maximal number of internal equilibria. In this paper we explore deeper links between random polynomial theory and random evolutionary game theory. 
\subsection{Summary of main results}
We now summarize our main results. Detailed statements and proofs will be given in the sequel sections. The first main result of the paper is explicit formulas for the distribution of the number of internal equilibria. 
\begin{theorem}[The distribution of the number of internal equilibria in a $d$-player two-strategy random evolutionary game]
\label{theo: main theo 1}
Suppose that the coefficients $\{\beta_k\}$ in~\eqref{eq: P1} are either normally distributed or uniformly distributed or being the difference of uniformly distributed random variables.  The probability that a $d$-player two-strategy random evolutionary game has $m$, $0\leq m\leq d-1$,  internal equilibria, is given by 
\begin{equation}
\label{eq: pm1}
p_{m}=\sum_{k=0}^{\lfloor \frac{d-1-m}{2}\rfloor}p_{m,2k,d-1-m-2k},
\end{equation}
where $p_{m,2k,d-1-m-2k}$ are given in \eqref{eq: pm2kG}-\eqref{eq: pm2kU1}-\eqref{eq: pm2kU2} respectively.
\end{theorem}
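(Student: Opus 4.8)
The plan is to translate the counting of internal equilibria into a root-counting problem for the random polynomial $P$ in~\eqref{eq: P1}, to pass from its coefficients to its roots, and to integrate the resulting Jacobian against the joint density of the coefficients over the appropriate regions.

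Recall from Section~\ref{sec: replicator} that the internal equilibria are exactly the positive real roots of $P$, which has degree $d-1$ almost surely, since for each of the three families the leading coefficient $\beta_{d-1}$ vanishes with probability zero. Counted with multiplicity, the $d-1$ roots of $P$ split into $m$ positive, $\ell$ negative and $2k$ non-real ones, the non-real roots occurring in conjugate pairs, with $m+\ell+2k=d-1$. Consequently the event that the game has exactly $m$ internal equilibria is the disjoint union, over $k=0,1,\dots,\lfloor(d-1-m)/2\rfloor$, of the events that $P$ has $m$ positive, $2k$ non-real and $\ell=d-1-m-2k$ negative roots; writing $p_{m,2k,\ell}$ for the probability of the latter yields~\eqref{eq: pm1} immediately, so the task reduces to evaluating each term $p_{m,2k,\ell}$.

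Next I would factor $P(y)=\beta_{d-1}\prod_{i=1}^{m}(y-x_i)\,\prod_{j=1}^{\ell}(y-z_j)\,\prod_{n=1}^{k}\bigl((y-u_n)^2+v_n^2\bigr)$ with $x_i>0$, $z_j<0$, $v_n>0$, and use $(\beta_{d-1},x,z,u,v)$ as new coordinates on the coefficient space. Off a set of measure zero (repeated roots, a root at the origin, or $\beta_{d-1}=0$) this map is a diffeomorphism, and the modulus of its Jacobian is, up to an explicit combinatorial constant, $|\beta_{d-1}|^{d-1}$ times the product of the pairwise distances of the $d-1$ roots, a complex pair $u_n\pm iv_n$ contributing the factor $2v_n$ together with $(x_i-u_n)^2+v_n^2$, $(z_j-u_n)^2+v_n^2$ and $(u_n-u_{n'})^2+(v_n\pm v_{n'})^2$. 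Writing $f$ for the joint density of $\bigl(\beta_k\binom{d-1}{k}\bigr)_{k=0}^{d-1}$, which is a product density in all three cases, the change of variables gives
\begin{equation*}
p_{m,2k,\ell}=\frac{1}{m!\,\ell!\,k!}\int |J|\,f\bigl(\mathrm{coeff}(\beta_{d-1},x,z,u,v)\bigr)\,d\beta_{d-1}\,dx\,dz\,du\,dv,
\end{equation*}
the integral running over $\beta_{d-1}\in\R$, $x\in(0,\infty)^{m}$, $z\in(-\infty,0)^{\ell}$, $u\in\R^{k}$, $v\in(0,\infty)^{k}$, with the prefactor accounting for the labelling of the roots within each group.

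It then remains to evaluate this integral for the three prescribed laws. For centred Gaussian coefficients $f(\mathrm{coeff})$ is proportional to $\exp\!\bigl(-\tfrac12\,\beta_{d-1}^{2}\,Q(x,z,u,v)\bigr)$, where $Q$ is a positive quadratic form in the elementary symmetric functions of the roots; integrating $\beta_{d-1}$ out contributes a factor proportional to $Q^{-d/2}$, and the remaining integral of the Vandermonde weight against $Q^{-d/2}$ over the root domains is precisely of the type handled in~\cite{DH15,DuongHanJMB2016,DuongTranHan2017a} for the expected number of equilibria, only now integrated over all roots simultaneously, and it yields~\eqref{eq: pm2kG}. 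In the uniform and difference-of-uniform cases $f(\mathrm{coeff})$ is instead the indicator of a box, respectively a convolution of two box indicators, in the coefficient variables, and the same procedure produces~\eqref{eq: pm2kU1} and~\eqref{eq: pm2kU2}. I expect the main obstacle to be twofold: the bookkeeping of the Vandermonde Jacobian when real and complex roots are mixed, together with checking that the degenerate configurations are genuinely negligible; and the reduction of the multidimensional root integral to the explicit expressions in the statement, which is where the particular structure of each law (the Gaussian integral, or the relevant polytope volumes) must be used. As a built-in consistency check one can invoke the symmetry of~\eqref{eq: P1} under $\beta_k\mapsto(-1)^{k}\beta_k$, which for these (symmetric) families interchanges positive and negative roots and hence forces $p_{m,2k,\ell}=p_{\ell,2k,m}$.
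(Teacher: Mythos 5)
Your proposal is correct and follows essentially the same route as the paper: you decompose the event over the number $2k$ of non-real roots, pass from coefficients to roots via the Jacobian $|\beta_{d-1}|^{d-1}$ times the Vandermonde with the prefactor $\frac{1}{m!\,k!\,\ell!}$ (this is exactly the adaptation of \cite[Theorem 1]{Zap06} carried out in Theorem~\ref{theo: Zap06}, with your Cartesian parametrisation $u_n\pm iv_n$ replacing the paper's polar coordinates $r_ne^{\pm i\alpha_n}$), and then integrate out the scaling variable for each of the three laws, which is precisely how \eqref{eq: pm2kG}, \eqref{eq: pm2kU1} and \eqref{eq: pm2kU2} are obtained in Theorem~\ref{prop: pm}.
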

In Section~\ref{sec: distribution}, we will derive Theorem~\ref{theo: main theo 1} from a more general theorem, Theorem~\ref{theo: Zap06}, where we provide explicit formulas for the probability $p_{m,2k,n-m-2k}$ that a random polynomial or degree $n$ has $m$ ($0\leq m\leq n$) positive, $2k$ ($0\leq k\leq \lfloor\frac{n-m}{2}\rfloor$) complex and $n-2m-2k$ negative roots. We expect that Theorem~\ref{theo: Zap06} is also useful in a more general context in the random polynomial theory. 

Theorem~\ref{theo: main theo 1} is theoretically interesting; however, to obtain $p_m$ one needs to calculate all the probabilities $p_{m,2k,d-1-2k}$, $ 0\leq k\leq \lfloor\frac{n-m}{2}\rfloor$, which are complex multiple integrals. In Section~\ref{sec: numerics} we do compute all the probabilities $p_m$, $0\leq m\leq d-1$, for $d$ is up to $5$ and compare the result with the sampling method. When $d$ is larger it becomes computationally expensive to compute these probabilities using formula~\eqref{eq: pm1}. Our second main result offers simpler explicit estimates of $p_m$ in terms of $d$ and $m$. It turns out that the symmetry of the coefficients $\beta_k$ plays a significant role. We consider two cases
\[
\text{Case 1}:~~\Pp(\beta_k>0)=\Pp(\beta_k<0)=\frac{1}{2}\quad\text{and Case 2}:~~\Pp(\beta_k>0)=\Pp(\beta_k<0)=\alpha
\]
for all $k=0,\ldots, d-1$ and for some $0\leq \alpha\leq 1$. Case 1 is obviously a particular instance of Case 2 when $\alpha=\frac{1}{2}$; however, due to its special symmetric property, we will provide a much more simpler treatment for Case 1 than the general one.
\begin{theorem}
\label{theo: main theo 2}
We have the following estimates for $p_m$
\begin{equation}
\label{eq: est of pm}
p_m\leq \sum_{\substack{k\geq m\\ k-m~\text{even}}} p_{k,d-1},
\end{equation}
where $p_{k,d-1}=\frac{1}{2^{d-1}}\begin{pmatrix}
d-1\\k
\end{pmatrix}$ if $\alpha=\frac{1}{2}$, in this case the sum on the right hand side of ~\eqref{eq: est of pm} can be computed explicitly in terms of $m$ and $d$, and can be computed explicitly according to Theorem \ref{theo: explicit p} for the general case.  Estimate~\eqref{eq: est of pm} has several interesting consequences such as explicit bounds for $p_{d-2}$ and $p_{d-1}$ as well as the following assertions
\begin{enumerate}[1)]
\item For $d=2$: $p_0=\alpha^2+(1-\alpha)^2$ and $p_1=2\alpha(1-\alpha)$;
\item For $d=3$: $p_1=2\alpha(1-\alpha)$.
\end{enumerate}
\end{theorem}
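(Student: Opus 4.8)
The plan is to combine the replicator-dynamics reduction recalled in Section~\ref{sec: replicator} with Descartes' rule of signs and an elementary count of sign changes in a random sign sequence. Recall that the internal equilibria of the $d$-player two-strategy game are in bijection with the roots $y>0$ of the polynomial $P$ in~\eqref{eq: P1}, so the game has $m$ internal equilibria precisely when $P$ has exactly $m$ positive roots. Since the $\beta_k$ are (differences of) random variables possessing densities, almost surely every $\beta_k$ is nonzero and every root of $P$ is simple; hence almost surely the number of positive roots of $P$, counted with multiplicity, coincides with the number of internal equilibria. Let $S$ be the number of sign changes in the sequence $(\beta_0,\beta_1,\dots,\beta_{d-1})$, which — since $\binom{d-1}{k}>0$ — is also the number of sign changes of the coefficient sequence of $P$. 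By Descartes' rule of signs the number of positive roots of $P$ is at most $S$ and has the same parity as $S$; consequently, almost surely, if the game has $m$ internal equilibria then $S\ge m$ and $S-m$ is even. Setting $p_{k,d-1}:=\Pp(S=k)$, I then obtain
\[
p_m=\Pp\bigl(\text{$m$ internal equilibria}\bigr)\le\Pp\bigl(S\ge m,\ S-m~\text{even}\bigr)=\sum_{\substack{k\ge m\\ k-m~\text{even}}}p_{k,d-1},
\]
which is exactly estimate~\eqref{eq: est of pm}.

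Next I would evaluate $p_{k,d-1}=\Pp(S=k)$. Assuming the signs of the $\beta_k$ to be independent, this is the probability that a string of $d$ independent signs, each equal to $+$ with probability $\alpha$ and to $-$ with probability $1-\alpha$, has exactly $k$ sign changes. When $\alpha=\tfrac12$ this is immediate: a string in $\{+,-\}^d$ is determined by its first entry together with the binary vector recording which of the $d-1$ consecutive pairs is a sign change, and all $2^d$ such strings are equiprobable; since $2\binom{d-1}{k}$ of them have exactly $k$ sign changes, $p_{k,d-1}=\binom{d-1}{k}/2^{d-1}$. The right-hand side of~\eqref{eq: est of pm} then equals $2^{-(d-1)}\sum_{k\ge m,\ k-m~\text{even}}\binom{d-1}{k}$, and using $\sum_{k\equiv m~(2)}\binom{d-1}{k}=2^{d-2}$ it can be written in closed form in $m$ and $d$ as $\tfrac12-2^{-(d-1)}\sum_{0\le k<m,\ k\equiv m~(2)}\binom{d-1}{k}$. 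For general $\alpha$ the string is no longer uniform, and one must sum, over all length-$d$ sign strings with exactly $k$ sign changes, the corresponding weights $\alpha^{p}(1-\alpha)^{d-p}$ where $p$ is the number of $+$ entries; decomposing such a string into its $k+1$ maximal constant runs and summing over the compositions of $d$ into $k+1$ positive parts produces the closed form for $p_{k,d-1}$ stated in Theorem~\ref{theo: explicit p}, which I would invoke.

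Finally I would read off the consequences. For $m=d-1$ the only index $k$ with $m\le k\le d-1$ and $k-m$ even is $k=d-1$, and for $m=d-2$ it is $k=d-2$, so~\eqref{eq: est of pm} collapses to $p_{d-1}\le p_{d-1,d-1}$ and $p_{d-2}\le p_{d-2,d-1}$. Here $p_{d-1,d-1}=\Pp(S=d-1)$ is the probability that the signs strictly alternate, equal to $2^{-(d-1)}$ when $\alpha=\tfrac12$ and to $\alpha^{\lceil d/2\rceil}(1-\alpha)^{\lfloor d/2\rfloor}+\alpha^{\lfloor d/2\rfloor}(1-\alpha)^{\lceil d/2\rceil}$ in general, while $p_{d-2,d-1}=(d-1)2^{-(d-1)}$ when $\alpha=\tfrac12$ and is given by Theorem~\ref{theo: explicit p} otherwise. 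For assertions 1) and 2) I would add a matching lower bound: if the sign sequence has exactly one change, i.e.\ $S=1$, then Descartes' rule forces the number of positive roots to be odd and at most $1$, hence exactly $1$, so $\{S=1\}\subseteq\{m=1\}$ and $p_1\ge\Pp(S=1)=p_{1,d-1}$. For $d\le3$ the only odd $k$ with $k\le d-1$ is $k=1$, so the upper bound in~\eqref{eq: est of pm} for $m=1$ also reduces to $p_{1,d-1}$; hence $p_1=p_{1,d-1}$, and a direct count of the length-$d$ sign strings with exactly one sign change gives $p_1=2\alpha(1-\alpha)$ for both $d=2$ and $d=3$. For $d=2$, moreover, $P$ has degree one and thus a unique real root, so $m\in\{0,1\}$ with $m=1$ iff $S=1$; hence $p_0=1-p_1=1-2\alpha(1-\alpha)=\alpha^2+(1-\alpha)^2$.

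The step I expect to be the main obstacle is the general-$\alpha$ evaluation of $p_{k,d-1}$: the biased run-length count — equivalently, a transfer-matrix or generating-function computation tracking the starting sign, the number of runs and their lengths — is where the genuine combinatorial work lies, and it is precisely what Theorem~\ref{theo: explicit p} packages. Once that is in hand, the only remaining subtlety is the parity clause of Descartes' rule: because it counts roots with multiplicity, one must invoke the almost-sure non-vanishing of the $\beta_k$ together with the almost-sure simplicity of the roots in order to pass to the actual number of distinct internal equilibria — a genericity property valid for all the distributions considered here.
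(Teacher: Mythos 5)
Your proposal is correct and follows essentially the same route as the paper: Descartes' rule of signs applied to the coefficient sequence $(\beta_0,\ldots,\beta_{d-1})$, the count $p_{k,d-1}=2^{-(d-1)}\binom{d-1}{k}$ of sign changes in the symmetric case, deferral to Theorem~\ref{theo: explicit p} for general $\alpha$, and the matching lower bound from $\{S=1\}\subseteq\{m=1\}$ to obtain the equalities for $d=2,3$. The only (immaterial) deviations are your use of $p_0=1-p_1$ for $d=2$ instead of the paper's two-sided bounds, and your slightly more explicit remark on almost-sure simplicity of the roots.
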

We will derive Theorem~\ref{theo: main theo 2} in Section~\ref{sec: universal} from Descartes' rule of signs applied to random polynomials and combinatorial techniques. Our technique can be used to obtain estimates for the probability that a random polynomial has a certain number of positive roots which is an interesting problem on its own right. 
\subsection{Organisation of the paper}
The rest of the paper is organised as follows.  In Section \ref{sec: replicator}, we summarize the replicator dynamics for multi-player two-strategy games. Section~\ref{sec: distribution} is devoted to the proof of Theorem~\ref{theo: main theo 1} on the probability distribution. The proof of Theorem~\ref{theo: main theo 2} will be given in Section~\ref{sec: universal}. In Section~\ref{sec: numerics} we show some numerical simulations to demonstrate analytical results. Finally, further discussions are given in Section~\ref{sec: discussion}.
\section{Replicator dynamics}
\label{sec: replicator}
A fundamental model of evolutionary game theory  is replicator dynamics  \cite{taylor:1978wv,zeeman:1980ze,hofbauer:1998mm,schuster:1983le,nowak:2006bo}, describing that whenever a strategy has a fitness larger than the  average fitness of the population, it is expected to  spread. For the sake of completeness, below we derive the replicator dynamics for multi-player two-strategy games.

Consider an infinitely large population with two strategies, A and B. Let  $x$, $0 \leq x \leq 1$, be the frequency of strategy A. The frequency of strategy B is thus $(1-x)$. The interaction of the individuals in the population is in randomly selected groups of $d$ participants, that is, they play and obtain their fitness from   $d$-player games. The game is defined through a $(d-1)$-dimensional payoff matrix \cite{gokhale:2010pn}, as follows. Let $a_k$  (resp., $b_k$) be the payoff of an A-strategist (resp., B) in a group  containing  $k$ A strategists (i.e. $d-k$ B strategists). In this paper, we consider symmetric games where the payoffs do not depend on the ordering of the players. Asymmetric games will be studied in our forthcoming paper~\cite{DuongTranHan2017c}. In the symmetric case, the probability that an A strategist interacts with $k$ other A strategists in a group of size $d$ is 
\begin{equation}
\begin{pmatrix}
d-1\\
k
\end{pmatrix}x^k (1-x)^{d-1-k}.
\end{equation}
Thus, the average payoffs of $A$ and $B$  are, respectively 
\begin{equation*}
\pi_A= \sum\limits_{k=0}^{d-1}a_k\begin{pmatrix}
d-1\\
k
\end{pmatrix}x^k (1-x)^{d-1-k},	\quad
\pi_B = \sum\limits_{k=0}^{d-1}b_k\begin{pmatrix}
d-1\\
k
\end{pmatrix}x^k (1-x)^{d-1-k}.
\end{equation*}
The replicator equation of a $d$-player two-strategy game is given by  \cite{hofbauer:1998mm,sigmund:2010bo,gokhale:2010pn}
\begin{equation*}
\dot{x}=x(1-x)\big(\pi_A-\pi_B\big).
\end{equation*}
Since $x=0$ and $x=1$ are two trivial equilibrium points, we focus only on internal ones,  i.e. $0 < x < 1$. They satisfy the condition that the  fitnesses of both strategies are the same $\pi_A=\pi_B$, which gives rise to
\begin{equation*}
\sum\limits_{k=0}^{d-1}\beta_k \begin{pmatrix}
d-1\\
k
\end{pmatrix}x^k (1-x)^{d-1-k} = 0,
\end{equation*}
where $\beta_k = a_k - b_k$. Using the transformation $y= \frac{x}{1-x}$, with $0< y < +\infty$, dividing the left hand side of the above equation by $(1-x)^{d-1}$ we obtain the following polynomial equation for $y$
\begin{equation}
\label{eq: eqn for y}
P(y):=\sum\limits_{k=0}^{d-1}\beta_k\begin{pmatrix}
d-1\\
k
\end{pmatrix}y^k=0.
\end{equation}
Note that this equation can also be derived from the definition of an evolutionary stable strategy, see e.g., \cite{broom:1997aa}. As in \cite{gokhale:2010pn, DH15, DuongHanJMB2016}, we are interested in random games where $a_k$ and $b_k$ (thus $\beta_k$), for $0\leq k\leq d-1 $, are random variables. 

In Section~\ref{sec: distribution} where we provide estimates for the number of internal equilibria in a $d$-player two-strategy game, we will use the information on the symmetry of $\beta_k$. The following lemma gives a necessary condition to determine when the difference of two random variables is symmetrically distributed.
  
\begin{lemma}\cite[Lemma 3.5]{DuongTranHan2017a}
\label{lem: symmetry of betas} Let $X$ and $Y$ be two exchangeable random variables, i.e. their joint probability distribution $f_{X,Y}(x,y)$ is symmetric, $f_{X,Y}(x,y)=f_{X,Y}(y,x)$. Then $Z=X-Y$ is symmetrically distributed about $0$, i.e., its probability distribution satisfies $f_Z(z)=f_Z(-z)$. In addition, if $X$ and $Y$ are iid then they are exchangeable.
\end{lemma}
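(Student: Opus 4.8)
The plan is to compute the density of $Z=X-Y$ by a linear change of variables and then to extract the required symmetry from the exchangeability hypothesis via a second substitution. First I would introduce the map $(x,y)\mapsto(x,z)$ with $z=x-y$; its inverse is $(x,z)\mapsto(x,x-z)$, with Jacobian of absolute value $1$, so the joint density of $(X,Z)$ is $f_{X,Z}(x,z)=f_{X,Y}(x,x-z)$, and marginalising out $x$ gives
\begin{equation*}
f_Z(z)=\int_{-\infty}^{\infty} f_{X,Y}(x,x-z)\,dx.
\end{equation*}

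Next I would evaluate $f_Z(-z)=\int_{-\infty}^{\infty} f_{X,Y}(x,x+z)\,dx$, apply the exchangeability identity $f_{X,Y}(x,x+z)=f_{X,Y}(x+z,x)$ inside the integral, and then substitute $u=x+z$ (so $du=dx$), which transforms the integral into $\int_{-\infty}^{\infty} f_{X,Y}(u,u-z)\,du=f_Z(z)$. This proves $f_Z(z)=f_Z(-z)$, i.e.\ $Z$ is symmetric about $0$. For the final assertion, if $X$ and $Y$ are iid with common density $g$ then $f_{X,Y}(x,y)=g(x)g(y)=g(y)g(x)=f_{X,Y}(y,x)$, so $X$ and $Y$ are exchangeable and the first part applies.

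The only real subtlety, and the point I would flag, is that this argument presupposes that $(X,Y)$ admits a joint density; for a fully general statement one should instead work with characteristic functions, where $\phi_Z(-t)=\mathbb{E}\big[e^{it(Y-X)}\big]=\phi_Z(t)$ holds directly because exchangeability says $(X,Y)$ and $(Y,X)$ have the same law, and uniqueness of characteristic functions then gives $Z\stackrel{d}{=}-Z$. Since the applications in this paper (normal, uniform, and differences thereof) all have densities, I would take the density computation as the main proof and add the characteristic-function version as a remark for completeness.
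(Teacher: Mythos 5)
Your proposal is correct, but note that this paper does not prove the lemma at all: it is quoted verbatim from \cite[Lemma 3.5]{DuongTranHan2017a}, so there is no internal proof to compare against. Your change-of-variables computation $f_Z(z)=\int f_{X,Y}(x,x-z)\,dx$ followed by the exchangeability substitution is valid (the Jacobian is indeed of absolute value $1$), and the iid-implies-exchangeable step is fine. You are also right to flag the implicit density assumption; the observation in your closing remark is in fact the cleanest and fully general route: exchangeability says $(X,Y)$ and $(Y,X)$ have the same law, hence $X-Y \stackrel{d}{=} Y-X=-(X-Y)$, giving $Z\stackrel{d}{=}-Z$ without any regularity hypothesis, and it specialises to the density statement in all the cases (normal, uniform, differences of uniforms) actually used in the paper.
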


\section{The distribution of the number of positive zeros of random polynomial and applications to EGT}
\label{sec: distribution}

In this section, we are interested in finding the distribution of the number of internal equilibria of a $d$-player two-strategy random evolutionary game. We recall that an internal equilibria is a real and positive zero of the polynomial $P(y)$ in \eqref{eq: eqn for y}
\begin{equation}
\label{eq: P}
P(y):=\sum\limits_{k=0}^{d-1}\beta_k\begin{pmatrix}
d-1\\
k
\end{pmatrix}y^k=0.
\end{equation}
Denote by $\kappa$ the number of positive zeros of this polynomial. For a given $m$, $0\leq m\leq d-1$, we need to compute the probability $p_m$ that $\kappa=m$. To this end, we first adapt a recent method introduced in \cite{Zap06} (see also \cite{ButezZeitouni2017, GKZ2017TM} for its applications to other problems) to establish a formula to compute the probability that a general random polynomial has a given number of real and positive zeros. Then we apply the general theory to the polynomial \eqref{eq: P} above.
\subsection{The distribution of the number of positive zeros of a random polynomials}

Consider a general random polynomial
\begin{equation}
\label{eq: general polynomial}
\Pp(t)=\xi_0 t^n+\xi_1t^{n-1}+\ldots+\xi_{n-1}t+\xi_n.
\end{equation}
We use the following notations for the elementary symmetric polynomials
\begin{align}
\sigma_0(y_1,\ldots,y_n)&=1,\nonumber
\\\sigma_1(y_1,\ldots,y_n)&=y_1+\ldots+y_n,\nonumber
\\\sigma_2(y_1,\ldots,y_n)&=y_1y_2+\ldots+y_{n-1}y_n\label{eq: sym pols}
\\&\vdots\nonumber
\\\sigma_{n-1}(y_1,\ldots,y_n)&=y_1y_2\ldots y_{n-1}+\ldots+y_2y_3\ldots y_n,\nonumber
\\ \sigma_{n}(y_1,\ldots,y_n)&=y_1\ldots y_n;\nonumber
\end{align}
and denote
\begin{equation}
\label{eq: Delta}
\Delta(y_1,\ldots,y_n)=\prod_{1\leq i<j\leq n}|y_i-y_j|.
\end{equation}
the Vandermonde determinant. 
\begin{theorem}
\label{theo: Zap06}
Assume that the random variables $\xi_0,\xi_1,\ldots, \xi_n$ have a joint density $p(a_0,\ldots,a_n)$. Let $0\leq m\leq d-1$ and $0\leq k\leq \lfloor \frac{n-m}{2}\rfloor$. The probability $p_{m,2k,n-m-2k}$ that $\Pp$ has $m$ positive, $2k$ complex and $n-m-2k$ negative zeros is given by
\begin{multline}
\label{eq: pm2k}
p_{m,2k,n-m-2k}=\frac{2^{k}}{m! k! (n-m-2k)!}\int_{\R_+^m}\int_{\R_-^{n-m-2k}} 
\int_{\R_+^k}\int_{[0,\pi]^k}\int_{\R}
\\ r_1\ldots r_k p(a\sigma_0,\ldots,a\sigma_{n}) |a^{n}\Delta|\, da\,d\alpha_1\ldots d\alpha_k dr_1\ldots dr_k dx_1\ldots dx_{n-2k},
\end{multline}
where 
\begin{align}
\label{eq:sigma}
&\sigma_j=\sigma_j(x_1,\ldots,x_{n-2k}, r_1e^{i\alpha_1}, r_1e^{-i\alpha_1},\ldots,r_k e^{i\alpha_k}, r_k e^{-i \alpha_k}),
\\&\Delta=\Delta(x_1,\ldots,x_{n-2k}, r_1e^{i\alpha_1}, r_1e^{-i\alpha_1},\ldots,r_k e^{i\alpha_k}, r_k e^{-i \alpha_k}).\label{eq:Delta}
\end{align}
As consequences,
\begin{enumerate}[(1)]
\item The probability that $\Pp$ has $m$ positive zeros is
\begin{equation}
p_{m}=\sum_{k=0}^{\lfloor \frac{n-m}{2}\rfloor}p_{m,2k,n-m-2k}.
\end{equation}
\item In particular, the probability that $\Pp$ has the maximal number of positive zeros is
\begin{equation}
p_{n}=\frac{2^{k}}{k! (n-2k)!}\int_{\R_+^{n}}\int_{\R}p(a\sigma_0,\ldots,a\sigma_{n})\, |a^{n}\,\Delta|\, dadx_1\ldots dx_{n},
\end{equation}
where
\begin{equation*}
\sigma_j=\sigma_j(x_1,\ldots,x_{n}),\quad\Delta=\Delta(x_1,\ldots,x_{n}).
\end{equation*}
\end{enumerate}
\end{theorem}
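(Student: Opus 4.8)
The plan is to adapt the coefficients--to--roots change of variables of \cite{Zap06}. Since $p$ is a density, almost surely $\xi_0\neq 0$, the polynomial $\Pp$ has $n$ pairwise distinct zeros, and none of them is $0$; on this event the zeros are partitioned in a unique way into $m$ positive reals, $n-m-2k$ negative reals, and $k$ complex conjugate pairs $\{w_l,\bar w_l\}$ with $\Im w_l>0$, for suitable $m$ and $k$. Hence the probability that $\Pp$ has $m$ positive, $2k$ complex and $n-m-2k$ negative zeros equals $\int_{A_{m,k}}p(a_0,\dots,a_n)\,da_0\cdots da_n$, where $A_{m,k}\subset\R^{n+1}$ is the set of coefficient vectors producing that configuration; by the density assumption the complement of $A_{m,k}$ within its closure (polynomials with $\xi_0=0$, with a repeated zero, or with a zero at the origin) is Lebesgue--null, so this reduction is legitimate.

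Next I would parametrise $A_{m,k}$ by the leading coefficient $a=\xi_0\in\R$ together with the zeros: $m$ real parameters $x_i$ ranging over $\R_+$, further $n-m-2k$ real parameters $x_i$ over $\R_-$, and $k$ parameters $w_l$ in the open upper half-plane, written in polar coordinates $w_l=r_le^{i\alpha_l}$ with $r_l>0$, $\alpha_l\in(0,\pi)$. Writing $\Pp(t)=a\prod_j(t-z_j)$ expresses each coefficient as $a$ times an elementary symmetric function $\sigma_j$ of the zeros (up to sign), so the resulting map $\Phi$ sends the parameter domain onto $A_{m,k}$; moreover $\Phi$ is exactly $m!\,(n-m-2k)!\,k!$-to-one, the multiplicity coming from relabelling zeros of the same type, the choice inside each conjugate pair being pinned down by the constraint $\Im w_l>0$. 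The change of variables formula then yields
\[
p_{m,2k,n-m-2k}=\frac{1}{m!\,(n-m-2k)!\,k!}\int_{\text{domain}} p(a\sigma_0,\dots,a\sigma_n)\,\bigl|\det D\Phi\bigr|\,,
\]
with the parameter domain and differentials as listed in \eqref{eq: pm2k} once $|\det D\Phi|$ is computed.

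The crux is this Jacobian. I would factor $\Phi$ as $(a;\text{zeros})\mapsto(a;\sigma_1,\dots,\sigma_n)\mapsto(a;a\sigma_1,\dots,a\sigma_n)$; the second map contributes $|a|^n$, and the first is the classical map sending $n$ numbers to their elementary symmetric functions, whose Jacobian equals the Vandermonde $\Delta$ evaluated at those numbers. For each complex conjugate pair one passes first to the real coordinates $(\Re w_l,\Im w_l)$ and then to $(r_l,\alpha_l)$: a direct computation — comparing, for a single pair, the $2\times2$ Jacobian of $(\Re w_l,\Im w_l)\mapsto(w_l+\bar w_l,\,w_l\bar w_l)$ with the factor $|w_l-\bar w_l|$ that occurs in $\Delta$ — shows each pair contributes an extra factor $2$, while the change to polar coordinates inserts the usual factor $r_l$. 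Assembling these pieces gives $|\det D\Phi|=2^{k}\,r_1\cdots r_k\,|a^{n}\Delta|$ with $\sigma_j$ and $\Delta$ as in \eqref{eq:sigma}--\eqref{eq:Delta}, which is precisely \eqref{eq: pm2k}. I expect this mixed real/complex Jacobian computation, together with the verification that the exceptional coefficient set is null, to be the main technical point; the remaining manipulations are bookkeeping. Finally, consequence (1) follows because, almost surely, the numbers of positive, negative and complex zeros sum to $n$, the number of complex zeros being even, so the event ``$m$ positive zeros'' is the disjoint union over $k=0,\dots,\lfloor\tfrac{n-m}{2}\rfloor$ of the events in \eqref{eq: pm2k}; and consequence (2) is the specialisation $m=n$, which forces $k=0$ (so $2^{k}=1$ and $k!=1$) and $n-m-2k=0$, leaving only the all--positive--real contribution integrated over $\R_+^{n}$.
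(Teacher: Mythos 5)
Your proposal is correct and follows essentially the same route as the paper: a coefficients-to-roots change of variables with leading coefficient $a$, Jacobian $2^{k}r_1\cdots r_k\,|a^{n}\Delta|$, and division by the $m!\,k!\,(n-m-2k)!$ relabelling multiplicity, the only difference being that you rederive the Jacobian (Vandermonde factor, $|a|^n$ scaling, factor $2$ per conjugate pair, factor $r_l$ from polar coordinates) while the paper imports that differential-form identity directly from \cite[Theorem 1]{Zap06} and only adapts the root-counting to distinguish positive from negative real zeros. Your handling of the null exceptional set and of consequences (1) and (2) matches the paper's argument.
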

\begin{proof}
The reference \cite[Theorem 1]{Zap06} provides a formula to compute the probability that the polynomial $\Pp$ has $n-2k$ real and $2k$ complex roots. In the present paper, we need to distinguish between positive and negative real zeros. We now sketch and adapt the proof of \cite[Theorem 1]{Zap06} to obtain the formula \eqref{eq: pm2k} for the probability that the polynomial $\Pp$ has $m$ positive, $2k$ complex and $n-m-2k$ negative roots. Consider a $(n+1)$-dimensional vector space $\mathbf{V}$ of polynomials of the form
\[
Q(t)=a_0t^n+a_1t^{n-1}+\ldots+a_{n-1}t+a_n
\]
and a measure $\mu$ on this space defined as the integral of the differential form
\begin{equation}
\label{eq: mu}
dQ=p(a_0,\ldots,a_n)\,da_0\wedge\ldots\wedge da_n.
\end{equation}
Our goal is to find $\mu(V_{m,2k})$ where $V_{m,2k}$ is the set of polynomials having $m$ positive, $2k$ complex and $n-m-2k$ negative roots. Let $Q\in V_{m,2k}$. Denote all zeros of $Q$ as
\begin{align*}
z_1=x_1,\ldots,z_{n-2k}=x_{n-2k},~z_{n-2k+1}=r_1 e^{i \alpha_1},~z_{n-2k+2}=r_1 e^{-i \alpha_1},\ldots,z_{n-1}=r_k e^{i \alpha_k},~z_{n}=r_k e^{-i \alpha_k},
\end{align*}
where
\[
0<x_1,\ldots, x_m<\infty;\quad -\infty <x_{m+1},\ldots, x_{n-2k}<0; \quad 0<r_1,\ldots,r_k<\infty;\quad 0<\alpha_1,\ldots,\alpha_k<\pi.
\]
To find $\mu(V_{m,2k})$ we need to integrate the differential form \eqref{eq: mu} over the set $V_{m,2k}$. The key idea in the proof of \cite[Theorem 1]{Zap06} is to make a change of coordinates $(a_0,\ldots, a_n)\mapsto (a_0,x_1,\ldots,x_{n-2k}, r_1,\ldots, r_k, \alpha_1,\ldots, \alpha_k)$ and find $dQ$ in the new coordinates. The derivation of the following formula is carried out in detail in \cite[Theorem 1]{Zap06}:
\begin{multline}
dQ=2^k r_1\ldots r_k\, p(a_0,a_0\sigma_1(x_1,\ldots,x_{n-2k},r_1e^{i\alpha_1},r_1e^{-i\alpha_1},\ldots,r_ke^{i\alpha_k}, r_ke^{-i\alpha_k}),\ldots\\ a_0\sigma_n(x_1,\ldots,x_{n-2k},r_1e^{i\alpha_1},r_1e^{-i\alpha_1},\ldots,r_ke^{i\alpha_k}, r_ke^{-i\alpha_k}))
\\ \times |a_0^n \Delta ((x_1,\ldots,x_{n-2k},r_1e^{i\alpha_1},r_1e^{-i\alpha_1},\ldots,r_ke^{i\alpha_k}, r_ke^{-i\alpha_k}))|
\\ \times dx_1\wedge\ldots\wedge dx_{n-2k}\wedge dr_1\wedge\ldots\wedge dr_k\wedge d\alpha_1\wedge\ldots\wedge d\alpha_k\wedge da_0.
\end{multline}
Now we integrate this equation over all polynomials $Q$ that have $m$ positive zeros, $n-m-2k$ negative zeros and $k$ complex zeros in the upper half-plane. Since there are $m !$ permutations of the positive zeros, $(n-m-2k)!$ permutations of the negative zeros, and $k!$ permutations of the complex zeros, after integrating each polynomial in the left-hand side will occur $m!k!(n-m-2k)!$ times. Hence the integral of the left-hand side is equal to $m!k!(n-m-2k)! \, p_{m,2k,n-m-2k}$. The integral on the right-hand side equals 
\begin{multline}
2^k\int_{\R_+^m}\int_{\R_-^{n-m-2k}} 
\int_{\R_+^k}\int_{[0,\pi]^k}\int_{\R} r_1\ldots r_k p(a\sigma_0,\ldots,a\sigma_{n}) |a^{n}\Delta|\, da\,d\alpha_1\ldots d\alpha_k dr_1\ldots dr_k dx_1\ldots dx_{n-2k},
\end{multline}
hence the assertion \eqref{eq: pm2k} follows.
\end{proof}
\subsection{The distribution of the number of internal equilibria}
Next we apply Theorem \ref{theo: Zap06} to compute the probability that a random evolutionary game has $m$, $0\leq m\leq d-1$, internal equilibria. We derive  formulas for the most three common cases \cite{HTG12}:
\begin{enumerate}[C1)]
\item $\{\beta_j,0\leq j\leq d-1\}$ are i.i.d. standard normally distributed,
\item $\{\beta_j\}$ are i.i.d uniformly distributed with the common distribution $f_j(x)=\frac{1}{2} \mathbb{1}_{[-1,1]}(x)$,
\item $\{a_k\}$ and $\{b_k\}$  are i.i.d uniformly distributed with the common distribution $f_j(x)=\frac{1}{2} \mathbb{1}_{[-1,1]}(x)$.
\end{enumerate}

The main result of this section is the following theorem (cf. Theorem \ref{theo: main theo 2}).
\def\bs{\boldsymbol{\sigma}}
\begin{theorem}
\label{prop: pm}
The probability that a $d$-player two-strategy random evolutionary game has $m$ ($0\leq m\leq d-1$) internal equilibria is
\begin{equation}
p_{m}=\sum_{k=0}^{\lfloor \frac{d-1-m}{2}\rfloor}p_{m,2k,d-1-m-2k},
\end{equation}
where $p_{m,2k,d-1-m-2k}$ is given below for each of the case above:

- for the case C1)
\begin{multline}
\label{eq: pm2kG}
p_{m,2k,d-1-m-2k}\\=\frac{2^{k}}{m! k! (d-1-m-2k)!}~\frac{ \Gamma\Big(\frac{d}{2}\Big) }{(\pi)^{\frac{d}{2}}\prod\limits_{i=0}^{d-1}\delta_i}\int_{\R_+^m}\int_{\R_-^{d-1-2k-m}} 
\int_{\R_+^k}\int_{[0,\pi]^k}\, r_1\ldots r_k\, \left(\sum\limits_{i=0}^{d-1}\frac{\sigma_i^2}{\delta_i^2}\right)^{-\frac{d}{2}}\Delta\\d\alpha_1\ldots d\alpha_k dr_1\ldots dr_k dx_1\ldots dx_{d-1-2k}
\end{multline}
where $\sigma_i$, for $i=0,\ldots,d-1$, and $\Delta$ are given in \eqref{eq:sigma}--\eqref{eq:Delta} and
$\delta_i=\begin{pmatrix}
d-1\\i
\end{pmatrix}$.

-for the case C2) 
\begin{multline}
\label{eq: pm2kU1}
p_{m,2k,d-1-m-2k}=\frac{2^{k+1-d}}{d \, m!\, k! \,(d-1-m-2k)!\prod\limits_{i=0}^{d-1} \delta_i}\int_{\R_+^m}\int_{\R_-^{d-1-2k-m}} 
\int_{\R_+^k}\int_{[0,\pi]^k}\, r_1\ldots r_k\,\Big(\min\big\{|\delta_i/\sigma_i|\big\}\Big)^{d} \Delta\\ \,d\alpha_1\ldots d\alpha_k dr_1\ldots dr_k dx_1\ldots dx_{d-1-2k}.
\end{multline}

--for the case C3)
\begin{multline}
\label{eq: pm2kU2}
p_{m,2k,d-1-m-2k}=\frac{2^{k+1}(-1)^d}{m! k! (d-1-m-2k)!\prod_{j=0}^{d-1}\delta_j^2}\int_{\R_+^m}\int_{\R_-^{d-1-2k-m}} 
\int_{\R_+^k}\int_{[0,\pi]^k}
\\  r_1\ldots r_k\, \prod_{j=0}^{d-1}|\sigma_j|\sum_{i=0}^{d}(-1)^i \frac{K_i}{2d-i} \Big(\min\big\{|\delta_i/\sigma_i|\big\}\Big)^{2d-i}\Delta\,d\alpha_1\ldots d\alpha_k dr_1\ldots dr_k dx_1\ldots dx_{d-1-2k}.
\end{multline}

In particular, the probability that a $d$-player two-strategy random evolutionary game has the maximal number of internal equilibria is

1) for the case C1)
\begin{equation}
\label{eq: pmax1}
p_{d-1}=\frac{1}{(d-1)!}~\frac{\Gamma\Big(\frac{d}{2}\Big) }{(\pi)^\frac{d}{2} \prod\limits_{i=0}^{d-1}\delta_i}~\int_{\R_+^{d-1}} q(\sigma_0,\ldots,\sigma_{d-1})\,dx_1\ldots dx_{d-1},
\end{equation}

2) for the case C2)
\begin{equation}
\label{eq: pmax2}
p_{d-1}=\frac{2^{1-d}}{d! \prod_{i=0}^{d-1} \delta_i}~\int_{\R_+^{d-1}}\Big(\min\big\{|\delta_i/\sigma_i|\big\}\Big)^{d} \Delta \,dx_1\ldots dx_{d-1},
\end{equation}

3) for the case C3)
\begin{equation}
\label{eq: pmax3}
p_{d-1}=\frac{2(-1)^d}{(d-1)!\prod_{j=0}^{d-1}\delta_j^2}\int_{\R_+^{d-1}}\prod_{j=0}^{d-1}|\sigma_j|\sum_{i=0}^{d}(-1)^i \frac{K_i}{2d-i} \Big(\min\big\{|\delta_i/\sigma_i|\big\}\Big)^{2d-i}\Delta\,dx_1\ldots dx_{d-1}.
\end{equation}
Note that in formulas \eqref{eq: pmax1}-\eqref{eq: pmax3} above, $\sigma_j=\sigma_j(x_1,\ldots,x_{d-1}),\quad \Delta=\Delta(x_1,\ldots,x_{d-1})$.
\end{theorem}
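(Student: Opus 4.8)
The plan is to derive Theorem~\ref{prop: pm} directly from the general formula~\eqref{eq: pm2k} of Theorem~\ref{theo: Zap06}: the only content that needs to be supplied is the joint density of the coefficients of $P$ and the explicit evaluation of the innermost integral over $a\in\R$ in each of the three cases C1)--C3).

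First I would cast the polynomial $P$ of~\eqref{eq: P} in the form~\eqref{eq: general polynomial}. With $n=d-1$ and $\delta_i=\binom{d-1}{i}$, matching coefficients gives $\xi_j=\delta_{d-1-j}\beta_{d-1-j}$. In each of C1)--C3) the $\beta_j$ are i.i.d.\ (for C3, each $\beta_j=a_j-b_j$ involves its own independent pair of uniforms, so the $\beta_j$ are again i.i.d.), and since $\delta_j=\delta_{d-1-j}$ the vector $(\xi_0,\dots,\xi_{d-1})$ has joint density
\[ p(a_0,\dots,a_{d-1})=\prod_{i=0}^{d-1}\frac1{\delta_i}\,f\!\Big(\frac{a_i}{\delta_i}\Big), \]
where $f$ is the common density of $\beta_0$: the standard normal density in C1); $f=\tfrac12\mathbb{1}_{[-1,1]}$ in C2); and, by a one-line convolution (in line with the symmetry provided by Lemma~\ref{lem: symmetry of betas}), the triangular density $f(x)=\tfrac{2-|x|}{4}\mathbb{1}_{[-2,2]}(x)$ in C3). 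In all three cases $f$ is even in its argument, hence $p$ is even in each coordinate, so in~\eqref{eq: pm2k} we may use the unsigned symmetric polynomials $\sigma_j$ of~\eqref{eq:sigma} directly.

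Next I would substitute $p$ into~\eqref{eq: pm2k}. The key observation is that $p(a\sigma_0,\dots,a\sigma_{d-1})$ depends on $a$ only through $|a|$ together with the explicit ratios $\sigma_i/\delta_i$, so the $a$-integral decouples from the rest and reduces to
\[ \int_{\R}|a|^{d-1}\,p(a\sigma_0,\dots,a\sigma_{d-1})\,da=2\int_0^\infty a^{d-1}\,g(a)\,da \]
for a one-variable function $g$, which is elementary in each case. In C1), $g(a)=(2\pi)^{-d/2}\big(\prod_i\delta_i\big)^{-1}\exp\!\big(-\tfrac{a^2}{2}\sum_i\sigma_i^2/\delta_i^2\big)$, and the Gaussian moment $2\int_0^\infty a^{d-1}e^{-Ca^2/2}\,da=2^{d/2}\Gamma(d/2)\,C^{-d/2}$ with $C=\sum_i\sigma_i^2/\delta_i^2$ gives, after collecting the prefactor $\frac{2^k}{m!\,k!\,(d-1-m-2k)!}$ from~\eqref{eq: pm2k}, formula~\eqref{eq: pm2kG}. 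In C2), the $d$ box constraints force $|a|\le\min_i|\delta_i/\sigma_i|=:M$, and $2\int_0^M a^{d-1}\,da=\tfrac2d M^d$ gives~\eqref{eq: pm2kU1}. In C3), the product of triangular densities equals, on the set $\{|a|\le 2M\}$ (on which every factor $2\delta_i-|a||\sigma_i|$ is nonnegative), the polynomial integrand $\big(4^d\prod_i\delta_i^2\big)^{-1}|a|^{d-1}\prod_{i=0}^{d-1}\big(2\delta_i-|a||\sigma_i|\big)$; factoring out $\prod_i|\sigma_i|$, expanding $\prod_i\big(2\delta_i/|\sigma_i|-a\big)$ in powers of $a$ (its coefficients being, up to powers of $2$, the elementary symmetric functions $K_i$ of $\delta_0/|\sigma_0|,\dots,\delta_{d-1}/|\sigma_{d-1}|$), and integrating monomials over $[0,2M]$ produces the sum $\sum_{i=0}^d(-1)^i\tfrac{K_i}{2d-i}M^{2d-i}$ in~\eqref{eq: pm2kU2}. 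Summing the resulting $p_{m,2k,d-1-m-2k}$ over $k$ gives $p_m$. Finally, taking $m=d-1$ forces $k=0$ and the absence of negative zeros, so the $\alpha$-, $r$- and $\R_-$-integrals collapse and $\sigma_j=\sigma_j(x_1,\dots,x_{d-1})$, $\Delta=\Delta(x_1,\dots,x_{d-1})$; this specialises~\eqref{eq: pm2kG}--\eqref{eq: pm2kU2} to~\eqref{eq: pmax1}--\eqref{eq: pmax3}.

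I expect the main obstacle to be bookkeeping rather than anything conceptual: carrying every power of $2$, every factorial and every $\delta_i$ through the change of variables so that~\eqref{eq: pm2kG}--\eqref{eq: pmax3} emerge exactly as stated. Within this, case C3) is the most delicate point — one has to check that the combined support of the $d$ triangular densities is precisely $\{|a|\le 2\min_i(\delta_i/|\sigma_i|)\}$, that on this set every factor $2\delta_i-|a||\sigma_i|$ is nonnegative (so that expanding the product introduces no spurious absolute values), and that the term-by-term integration reproduces exactly the indexing of the $K_i$ in~\eqref{eq: pm2kU2}.
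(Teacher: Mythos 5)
Your proposal is correct and follows essentially the same route as the paper: apply Theorem~\ref{theo: Zap06} to $P$, write down the joint density of the scaled coefficients $\delta_j\beta_j$, and evaluate the inner $a$-integral explicitly in each of C1)--C3), then specialise to $m=d-1$, $k=0$. The only (welcome) deviation is in C3), where you use the correct triangular convolution density $\tfrac{2-|x|}{4}\mathbb{1}_{[-2,2]}$ rather than the paper's stated $(1-|x|)\mathbb{1}_{[-1,1]}$; by the scale invariance noted in Remark~\ref{remark:indepedenceOfVariance} (and as your power-of-$2$ bookkeeping over $[0,2M]$ confirms) this yields exactly the same formula \eqref{eq: pm2kU2}.
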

\begin{proof}
\def\y{\mathbf{y}}
1) Since $\{\beta_j,0\leq j\leq d-1\}$ are i.i.d. standard normally distributed and for $i\neq j$ the joint distribution $p(y_0,\ldots,y_{d-1})$ of $\left\{\begin{pmatrix}
d-1\\j
\end{pmatrix}\beta_j,0\leq j\leq d-1\right\}$ is given by
\[
p(y_0,\ldots,y_{d-1})=\frac{1}{(2\pi)^{\frac{d}{2}} \prod_{i=0}^{d-1}\begin{pmatrix}
d-1\\i
\end{pmatrix}}\exp\left[-\frac{1}{2}\sum_{i=0}^{d-1}\frac{y_i^2}{\begin{pmatrix}
d-1\\i
\end{pmatrix}^2}\right]=\frac{1}{(2\pi)^{\frac{d}{2}}|\C|^\frac{1}{2}}\exp\Big[-\frac{1}{2}\y^T\C^{-1}\y\Big],
\]
where $\mathbf{y}=[y_0~~y_1~~\ldots~~y_{d-1}]^T$ and $\C$ is the covariance matrix 
\[
\C_{ij}=\begin{pmatrix}
d-1\\i
\end{pmatrix}\begin{pmatrix}
d-1\\j
\end{pmatrix}\delta_{ij}.
\]
Therefore,
\begin{equation}
\label{eq: formula of p}
p(a\sigma_0,\ldots, a \sigma_{d-1})=\frac{1}{(2\pi)^\frac{d}{2} |\C|^\frac{1}{2}}\exp\Bigg(-\frac{a^2}{2}\bs^T\,\C^{-1}\,\bs\Bigg)\quad\text{where}\quad
\bs=[\sigma_0~\sigma_1~\ldots~\sigma_{d-1}]^T.
\end{equation}
Using the following formula for moments of a normal distribution, 
\begin{equation*}
\int_{\R}|x|^n\exp\big(-\alpha x^2\big)\,dx=\frac{\Gamma\big(\frac{n+1}{2}\big)}{\alpha^\frac{n+1}{2}},
\end{equation*}
we compute
\begin{equation*}
\int_{\R}|a|^{d-1}\exp\Bigg(-\frac{a^2}{2}\bs^T\,\C^{-1}\,\bs\Bigg)\,da=\frac{\Gamma\Big(\frac{d}{2}\Big)}{\Big(\frac{\bs^T\C^{-1}\bs}{2}\Big)^{\frac{d}{2}}}=\frac{2^\frac{d}{2}\Gamma\Big(\frac{d}{2}\Big)}{\big(\bs^T\C^{-1}\bs\big)^{\frac{d}{2}}}.
\end{equation*}
Applying Theorem \ref{theo: Zap06} to the polynomial $P$ given in \eqref{eq: P1} and using the above identity we obtain
\begin{align*}
p_{m,2k,d-1-m-2k}&=\frac{2^{k}}{m! k! (d-1-m-2k)!}\int_{\R_+^m}\int_{\R_-^{d-1-2k-m}} 
\int_{\R_+^k}\int_{[0,\pi]^k}\int_{\R}
\\& \qquad \, r_1\ldots r_k\, p(a\sigma_0,\ldots,a\sigma_{d-1}) |a|^{d-1}\Delta\, da\,d\alpha_1\ldots d\alpha_k dr_1\ldots dr_k dx_1\ldots dx_{d-1-2k}
\\&=\frac{2^{k}}{m! k! (d-1-m-2k)!}~\frac{1}{(2\pi)^\frac{d}{2} |\C|^\frac{1}{2}}~ 2^{\frac{d}{2}}\Gamma\Big(\frac{d}{2}\Big) ~\int_{\R_+^m}\int_{\R_-^{d-1-2k-m}} 
\int_{\R_+^k}\int_{[0,\pi]^k}
\\& \qquad \, r_1\ldots r_k\, \big(\bs^T\C^{-1}\bs\big)^{-\frac{d}{2}}~\Delta\,d\alpha_1\ldots d\alpha_k dr_1\ldots dr_k dx_1\ldots dx_{d-1-2k}
\\&=\frac{2^{k}}{m! k! (d-1-m-2k)!}~\frac{\Gamma\Big(\frac{d}{2}\Big) }{(\pi)^\frac{d}{2} |\C|^\frac{1}{2}}~\int_{\R_+^m}\int_{\R_-^{d-1-2k-m}} 
\int_{\R_+^k}\int_{[0,\pi]^k}
\\& \qquad \, r_1\ldots r_k\, \big(\bs^T\C^{-1}\bs\big)^{-\frac{d}{2}}~\Delta\,d\alpha_1\ldots d\alpha_k dr_1\ldots dr_k dx_1\ldots dx_{d-1-2k},
\end{align*}
which is the desired equality \eqref{eq: pm2kG} by definition of $\C$ and $q$.

2) Now since $\{\beta_j\}$ are i.i.d uniformly distributed with the common distribution $f_j(x)=\frac{1}{2} \mathbb{1}_{[-1,1]}(x)$, the joint distribution $p(y_0,\ldots,y_{d-1})$ of $\left\{\begin{pmatrix}
d-1\\j
\end{pmatrix}\beta_j,0\leq j\leq d-1\right\}$ is given by
\[
p(y_0,\ldots,y_{d-1})=\frac{1}{2^{d}\prod_{i=0}^{d-1} \delta_i}\mathbb{1}_{\bigtimes_{i=0}^{d-1}[-\delta,\delta_i]}(y_0,\ldots, y_{d-1}) \quad\text{where}~~\delta_i=\begin{pmatrix}
d-1\\i
\end{pmatrix}.
\]
Therefore,
\[
p(a\sigma_0,\ldots,a \sigma_{d-1})=\frac{1}{2^{d}\prod_{i=0}^{d-1} \delta_i}\mathbb{1}_{\bigtimes_{i=0}^{d-1}[-\delta,\delta_i]}(a\sigma_0,\ldots, a \sigma_{d-1}).
\]
Since $\mathbb{1}_{\bigtimes_{i=0}^{d-1}[-\delta,\delta_i]}(a\sigma_0,\ldots, a \sigma_{d-1})=1$ if and only if $a\sigma_i\in [-\delta_i,\delta_i]$ for all $i=0,\ldots, d-1$; i.e.,
\[
a\in \bigcap\limits_{i=0}^{d-1} \big[-|\delta_i/\sigma_i|,|\delta_i/\sigma_i|\big]=\big[-\min\big\{|\delta_i/\sigma_i|\big\},\min\big\{|\delta_i/\sigma_i|\big\}\big],
\]
We compute
\begin{align*}
\int_{\R}|a|^{d-1}p(a\sigma_0,\ldots,a \sigma_{d-1})\,da=\frac{1}{2^{d}\prod_{i=0}^{d-1} \delta_i} \int_{-\min\big\{|\delta_i/\sigma_i|\big\}}^{\min\big\{|\delta_i/\sigma_i|\big\}}|a|^{d-1}\,da=\frac{1}{d\, 2^{d-1}\prod_{i=0}^{d-1} \delta_i} \Big(\min\big\{|\delta_i/\sigma_i|\big\}\Big)^{d}.
\end{align*}
Similarly as in the Gaussian case, using this identity and applying Theorem \ref{theo: Zap06} we obtain
\begin{align*}
p_{m,2k,d-1-m-2k}&=\frac{2^{k}}{m! k! (d-1-m-2k)!}\int_{\R_+^m}\int_{\R_-^{d-1-2k-m}} 
\int_{\R_+^k}\int_{[0,\pi]^k}\int_{\R}
\\& \qquad \, r_1\ldots r_k\, p(a\sigma_0,\ldots,a\sigma_{d-1}) |a|^{d-1}\Delta\, da\,d\alpha_1\ldots d\alpha_k dr_1\ldots dr_k dx_1\ldots dx_{d-1-2k}
\\&=\frac{2^{k+1-d}}{d \, m!\, k! \,(d-1-m-2k)! \prod_{i=0}^{d-1} \delta_i}\int_{\R_+^m}\int_{\R_-^{d-1-2k-m}} 
\int_{\R_+^k}\int_{[0,\pi]^k}
\\& \qquad \, r_1\ldots r_k\, \Big(\min\big\{|\delta_i/\sigma_i|\big\}\Big)^{d} \Delta\, da\,d\alpha_1\ldots d\alpha_k dr_1\ldots dr_k dx_1\ldots dx_{d-1-2k}.
\end{align*}
3) Now we assume that $a_j$ and $b_j$ are i.i.d uniformly distributed with the common distribution $\gamma(x)=\frac{1}{2} \mathbb{1}_{[-1,1]}(x)$. Since $\beta_j=a_j-b_j$, its probability density is given by
\begin{align*}
\gamma_{\beta}(x)=\int_{-\infty}^{+\infty}f(y)f(x+y)\,dy=(1-|x|)\mathbb{1}_{[-1,1]}(x).
\end{align*}
The probability density of $\delta_j\beta_j$ is
\[
\gamma_{j}(x)=\frac{1}{\delta_j}\Big(1-\frac{|x|}{\delta_j}\Big)\mathbb{1}_{[-1,1]}(x/\delta_j)=\frac{\delta_j-|x|}{\delta_j^2}\mathbb{1}_{[-\delta_j,\delta_j]}(x),
\]
and the joint distribution $p(y_0,\ldots,y_{d-1})$ of $\left\{\delta_j\beta_j,0\leq j\leq d-1\right\}$ is given by
\[
p(y_0,\ldots,y_{d-1})=\prod\limits_{j=0}^{d-1}\frac{\delta_j-|y_j|}{\delta_j^2}\mathbb{1}_{\bigtimes_{i=0}^{d-1}[-\delta_i,\delta_i]}(y_0,\ldots, y_{d-1}).
\]
Therefore
\[
p(a\sigma_0,\ldots,a \sigma_{d-1})=\prod\limits_{j=0}^{d-1}\frac{\delta_j-|a \sigma_j|}{\delta_j^2}\mathbb{1}_{\bigtimes_{i=0}^{d-1}[-\delta_i,\delta_i]}( a \sigma_0,\ldots,a \sigma_{d-1}).
\]
We compute
\begin{align*}
\int_{\R}|a|^{d-1}p(a\sigma_0,\ldots,a \sigma_{d-1})\,da&=\frac{1}{\prod_{j=0}^{d-1}\delta_j^2}\int_{-\min\big\{|\delta_i/\sigma_i|\big\}}^{\min\big\{|\delta_i/\sigma_i|\big\}}|a|^{d-1}\prod_{j=0}^{d-1}(\delta_j-|a\sigma_j|)\,da
\\&=\frac{2}{\prod_{j=0}^{d-1}\delta_j^2}\int_{0}^{\min\big\{|\delta_i/\sigma_i|\big\}}a^{d-1}\prod_{j=0}^{d-1}(\delta_j-a|\sigma_j|)\,da
\\&=2 (-1)^d \prod_{j=0}^{d-1}\frac{|\sigma_j|}{\delta_j^2}\int_{0}^{\min\big\{|\delta_i/\sigma_i|\big\}}a^{d-1}\prod_{j=0}^{d-1}\Big(a-\frac{\delta_j}{|\sigma_j|}\Big)\,da
\\&=2 (-1)^d \prod_{j=0}^{d-1}\frac{|\sigma_j|}{\delta_j^2}\sum_{i=0}^{d}(-1)^i K_i\int_{0}^{\min\big\{|\delta_i/\sigma_i|\big\}}a^{2d-1-i}\,da
\\&=2 (-1)^d \prod_{j=0}^{d-1}\frac{|\sigma_j|}{\delta_j^2}\sum_{i=0}^{d}(-1)^i \frac{K_i}{2d-i} \Big(\min\big\{|\delta_i/\sigma_i|\big\}\Big)^{2d-i}
\end{align*}
where $K_i=\sigma_i(\delta_0/|\sigma_0|,\ldots, \delta_{d-1}/|\sigma_{d-1}|)$ for $i=0,\ldots, d$.

Therefore,
\begin{align*}
&p_{m,2k,d-1-m-2k}\nonumber
\\&=\frac{2^{k}}{m! k! (d-1-m-2k)!}\int_{\R_+^m}\int_{\R_-^{d-1-2k-m}} 
\int_{\R_+^k}\int_{[0,\pi]^k}\int_{\R}\nonumber
\\& \qquad \, r_1\ldots r_k\, p(a\sigma_0,\ldots,a\sigma_{d-1}) |a|^{d-1}\Delta\, da\,d\alpha_1\ldots d\alpha_k dr_1\ldots dr_k dx_1\ldots dx_{d-1-2k}\nonumber
\\&=\frac{2^{k+1}(-1)^d}{m! k! (d-1-m-2k)!\prod_{j=0}^{d-1}\delta_j^2}\int_{\R_+^m}\int_{\R_-^{d-1-2k-m}} 
\int_{\R_+^k}\int_{[0,\pi]^k}\nonumber
\\& \qquad \, r_1\ldots r_k\, \prod_{j=0}^{d-1}|\sigma_j|\sum_{i=0}^{d}(-1)^i \frac{K_i}{2d-i} \Big(\min\big\{|\delta_i/\sigma_i|\big\}\Big)^{2d-i}\Delta\,d\alpha_1\ldots d\alpha_k dr_1\ldots dr_k dx_1\ldots dx_{d-1-2k}
\end{align*}
\end{proof}

\begin{corollary} The expected numbers  of the internal equilibria and stable internal equilibria, $E(d)$ and $SE(d)$, respectively,  of a $d$-player two-strategy can be computed via
\begin{equation*}
E(d)=\sum_{m=0}^{d-1} m p_m, \quad \quad  SE(d)=\frac{1}{2}\sum_{m=0}^{d-1} m p_m.
\end{equation*}
Note that these formulas are applicable for non-Gaussian distributions in contrast to the method used in the previous section that can only be  used  for the Gaussian case.
\end{corollary}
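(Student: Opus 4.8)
\emph{Proof plan.} The first identity asks for nothing beyond unwinding definitions, so I would dispose of it first. Write $\kappa$ for the random number of internal equilibria, i.e.\ the number of positive zeros of the polynomial $P$ in \eqref{eq: P}. Since $\deg P\le d-1$ we have $\kappa\in\{0,1,\dots,d-1\}$, and Theorem~\ref{prop: pm} (equivalently Theorem~\ref{theo: main theo 1}) identifies $p_m$ with $\Pp(\kappa=m)$ in each of the cases C1)--C3); hence $E(d)=\Expectation[\kappa]=\sum_{m=0}^{d-1}m\,\Pp(\kappa=m)=\sum_{m=0}^{d-1}m\,p_m$. As the statement notes, this route to $E(d)$ remains available in the non-Gaussian cases C2), C3)---in contrast to the earlier, Gaussian-only, derivations of $E(d)$---precisely because Theorem~\ref{prop: pm} covers those cases.

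The identity $SE(d)=\tfrac12 E(d)$ is the real content, and the plan is a symmetry argument. First I would record the stability criterion. With $\dot x=x(1-x)(\pi_A-\pi_B)=:H(x)$ and, as derived in Section~\ref{sec: replicator}, $\pi_A-\pi_B=(1-x)^{d-1}P\!\left(\tfrac{x}{1-x}\right)$, a short chain-rule computation (using $\tfrac{d}{dx}\tfrac{x}{1-x}=(1-x)^{-2}$) shows that at an internal equilibrium $x^\ast\in(0,1)$, with associated positive root $y^\ast=\tfrac{x^\ast}{1-x^\ast}$ of $P$,
\[
H'(x^\ast)=x^\ast(1-x^\ast)^{\,d-2}\,P'(y^\ast),
\]
so that, $x^\ast(1-x^\ast)^{d-2}$ being positive, $x^\ast$ is asymptotically stable iff $P'(y^\ast)<0$ and unstable iff $P'(y^\ast)>0$. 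For almost every realization of $\{\beta_k\}$ the discriminant of $P$---a nonzero polynomial in the $\beta_k$, hence nonvanishing outside a Lebesgue-null set---does not vanish, so every positive root of $P$ is simple, $P'$ vanishes at none of them, and each internal equilibrium lies in exactly one of the two classes; writing $N_s$ and $N_u$ for the numbers of stable and of unstable internal equilibria, this gives $N_s+N_u=\kappa$ almost surely, while $SE(d)=\Expectation[N_s]$ by definition.

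I would then exploit the sign flip $(\beta_0,\dots,\beta_{d-1})\mapsto(-\beta_0,\dots,-\beta_{d-1})$, which sends $P$ to $-P$: it leaves the zero set of $P$ (hence $\kappa$) unchanged but reverses the sign of $P'$ at every zero, thereby interchanging $N_s$ and $N_u$ pointwise (almost surely). In each of the three cases the joint law of $(\beta_0,\dots,\beta_{d-1})$ is invariant under this map: in C1) and C2) the $\beta_k$ are independent with each marginal symmetric about $0$; in C3) one has $\beta_k=a_k-b_k$ with $(a_k,b_k)$ i.i.d., hence exchangeable, so $-\beta_k\overset{d}{=}\beta_k$ by Lemma~\ref{lem: symmetry of betas}, and independence across $k$ is preserved. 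Therefore $N_s\overset{d}{=}N_u$, so $\Expectation[N_s]=\Expectation[N_u]$, and combining this with $N_s+N_u=\kappa$ a.s.\ yields $SE(d)=\Expectation[N_s]=\tfrac12\Expectation[\kappa]=\tfrac12 E(d)=\tfrac12\sum_{m=0}^{d-1}m\,p_m$.

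The argument is short and I do not anticipate a genuine obstacle; the only points needing care are (i) the stability criterion $P'(y^\ast)<0$, i.e.\ the chain-rule computation above; (ii) the almost-sure simplicity of the positive zeros of $P$, which is what makes the ``stable/unstable'' dichotomy exhaustive and guarantees $N_s+N_u=\kappa$; and (iii) the invariance of the law of $(\beta_k)$ under coordinatewise negation, where only case C3) calls for comment and is settled by the exchangeability of $(a_k,b_k)$ through Lemma~\ref{lem: symmetry of betas}. It is worth stressing that the factor $\tfrac12$ is exactly a consequence of the symmetry $\beta_k\overset{d}{=}-\beta_k$ and would fail for payoff distributions not possessing it.
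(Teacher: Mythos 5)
Your proposal is correct, and it supplies exactly the reasoning the paper leaves implicit: the corollary is stated without proof, the first identity being just $E(d)=\Expectation[\kappa]$ once Theorem~\ref{prop: pm} identifies $p_m=\Pp(\kappa=m)$, and the factor $\tfrac12$ being inherited from the symmetry argument used in the authors' earlier works \cite{DH15,DuongHanJMB2016} for the expected number of stable equilibria. Your three ingredients --- the stability criterion $H'(x^\ast)=x^\ast(1-x^\ast)^{d-2}P'(y^\ast)$ so that stability amounts to $P'(y^\ast)<0$, the almost-sure simplicity of the positive roots (nonvanishing discriminant, absolutely continuous coefficients) so that $N_s+N_u=\kappa$, and the invariance of the law of $(\beta_0,\ldots,\beta_{d-1})$ under coordinatewise negation (trivial in C1)--C2), via Lemma~\ref{lem: symmetry of betas} in C3)) giving $N_s\overset{d}{=}N_u$ --- are precisely what is needed, and your observation that the claim genuinely requires $\beta_k\overset{d}{=}-\beta_k$ is a worthwhile caveat. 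The only point I would flag is cosmetic: since the paper treats the corollary as immediate, your write-up is more detailed than the paper's own treatment, but it is a faithful and complete justification rather than a different route.
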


\begin{remark}
\label{remark:indepedenceOfVariance}
In Theorem \ref{prop: pm} for the case C1), the assumption that $\beta_k$ are standard Gaussians, thus particularly have variance $1$, is just for simplicity. Suppose that $\beta_k$ is Gaussian with mean $0$ and variance $\eta^2$. We show that the probability $p_{m}$, for $0\leq m\leq d-1$, does not depend on $\eta$. In this case, the formula for $p$ is given by \eqref{eq: formula of p} but with $\C$ being  replaced by $\eta^2 \C$. To indicate its dependence on $\eta$, we write $p_\eta$. We use a change of variable $a=\eta \tilde{a}$. Then
\begin{align*}
a^{d-1}p_\eta(a\sigma_0,\ldots, a\sigma_{d-1})\,da&=\eta^{d-1}\tilde{a}^{d-1}\frac{1}{(\sqrt{2\pi}\eta)^{d}\prod_{j=0}^{d-1} \begin{pmatrix}
d-1\\j
\end{pmatrix}}\exp\left[-\frac{\tilde{a}^2}{2}\sum_{j=0}^{d-1}\frac{\sigma_j^2}{\begin{pmatrix}
d-1\\j
\end{pmatrix}^2}\right]\eta\, d\tilde{a}
\\&=\tilde{a}^{d-1}\frac{1}{(\sqrt{2\pi})^{d}\prod_{j=0}^{d-1} \begin{pmatrix}
d-1\\j
\end{pmatrix}}\exp\left[-\frac{\tilde{a}^2}{2}\sum_{j=0}^{d-1}\frac{\sigma_j^2}{\begin{pmatrix}
d-1\\j
\end{pmatrix}^2}\right]\, d\tilde{a}
\\&=\tilde{a}^{d-1}p_1(\tilde{a}\sigma_0,\ldots, \tilde{a}\sigma_{d-1}),
\end{align*}
from which we deduce that $p_m$ does not depend on $\eta$. Similarly for the other cases, the uniform interval can be $\frac{1}{2\alpha}[-\alpha,\alpha]$ for some $\alpha>0$.
\end{remark}

In the following examples, we apply Theorem \ref{prop: pm} to obtain explicit formulas for games with small number of players ($d=3, 4$). For illustration we consider the case of normal distributions, i.e. the case C1). All formulas in these examples are executed using Mathematica. 

\subsection{Concrete examples}
\label{sec: examples}
\begin{example}[\textbf{Three-player two-strategy games: $d = 3$}]\ \\
1) One internal equilibria: $p_1=p_{1,0,1}$. We have 
\begin{align*}
& m = 1, \ k = 0, \sigma_0=1, \quad \sigma_1=x_1+x_2, \sigma_2= x_1x_2, \Delta=|x_2-x_1|
\\& q(\sigma_0,\sigma_1,\sigma_2)=\frac{1}{\left(1+x_1^2 x_2^2+\frac{1}{4} \left(x_1+x_2\right){}^2\right){}^{3/2}} |x_2-x_1|
\end{align*}
Substituting these values into \eqref{eq: pm2kG} we obtain the probability that a three-player two-strategy evolutionary game has $1$ internal equilibria 
\[
p_{1}=\frac{1}{4 \pi }\int_{\R_+}\int_{\R_-}\frac{1}{\left(1+x_1^2 x_2^2+\frac{1}{4} \left(x_1+x_2\right){}^2\right){}^{3/2}} |x_2-x_1| \,dx_1\,dx_2 = 0.5.
 \] 
2) Two internal equilibria: $p_2=p_{2,0,0}$.  We have 
\begin{align*}
& m = 2, \ k = 0, \sigma_0=1, \quad \sigma_1=x_1+x_2, \sigma_2= x_1x_2, \Delta=|x_2-x_1|
\\& q(\sigma_0,\sigma_1,\sigma_2)=\frac{1}{\left(1+x_1^2 x_2^2+\frac{1}{4} \left(x_1+x_2\right){}^2\right){}^{3/2}}|x_2-x_1|
\end{align*}

The probability that a three-player two-strategy evolutionary game has $2$ internal equilibria is
\begin{equation}
p_2=\frac{1}{8\pi}\int_{\R_+^2}\frac{1}{\left(1+x_1^2 x_2^2+\frac{1}{4} \left(x_1+x_2\right){}^2\right){}^{3/2}} |x_2-x_1| \,dx_1\,dx_2 \ \approx 0.134148.
\end{equation}
3) None-internal equilibria: the probability that a three-player two-strategy evolutionary game has none internal equilibria is $p_0=1-p_1-p_2 \ \approx 1 - 0.5 - 0.134148 =  0.365852.$
\end{example}

\begin{example}[\textbf{Four-player two-strategy games: $d=4$}]\ \\ \\
1) One internal equilibria: $p_{1}=p_{1,0,2}+p_{1,2,0}$
\\ \ \\
We first compute $p_{1,0,2}$. In this case,
\[
m=1,\quad k=0,\quad \sigma_0=1, \quad \sigma_1=x_1+x_2+x_3, \sigma_2= x_1x_2+x_1x_3+x_2x_3, \Delta=|x_2-x_1|\, |x_3-x_1|\,|x_3-x_2|.
\]
Substituting these into \eqref{eq: pm2kG} we get
\begin{multline*}
p_{1,0,2}=\frac{1}{18\pi^2}\int_{\R_-}\int_{\R_-}\int_{\R_+} \Big(1+\frac{(x_1+x_2+x_3)^2}{9}+\frac{(x_1x_2+x_1x_3+x_2x_3)^2}{9}+(x_1x_2x_3)^2\Big)^{-2} \\ \times   |x_2-x_1|\,|x_3-x_1|\, |x_3-x_2|\,dx_1\,dx_2\,dx_3 \  \approx 0.223128.
\end{multline*}
Next we compute $p_{1,2,0}$. In this case,
\begin{align*}
&m=1, \quad k=1,\quad \sigma_0=1, \sigma_1=\sigma_1(x_1,r_1e^{i\alpha_1}, r_1 e^{-i\alpha_1})=x_1+r_1e^{i\alpha_1}+ r_1 e^{-i\alpha_1}=x_1+2r_1\cos(\alpha_1),
\\&\sigma_2=\sigma_2(x_1,r_1e^{i\alpha_1}, r_1 e^{-i\alpha_1})=x_1(r_1e^{i\alpha_1}+r_1e^{-i\alpha_1})+r_1^2=2x_1r_1\cos(\alpha_1)+r_1^2,
\\& \sigma_3=\sigma_3(x_1,r_1e^{i\alpha_1}, r_1 e^{-i\alpha_1})=x_1r_1^2.
\\&\Delta=\Delta(x_1,r_1e^{i\alpha_1}, r_1 e^{-i\alpha_1})=|r_1e^{i\alpha_1}-x_1||r_1e^{-i\alpha_1}-x_1||r_1e^{i\alpha_1}-r_1e^{-i\alpha_1}|=|r_1^2-2x_1r_1\cos(\alpha_1)+x_1^2||2r_1\sin(\alpha_1)|.
\end{align*}
Substituting these into \eqref{eq: pm2kG} yields
%\begin{multline*}
%p(a\sigma_0,a\sigma_1,a\sigma_3)=\frac{1}{144\pi^2}\exp\left[-\frac{a^2}{4}\Big(1+\frac{(x_1+2r_1\cos(\alpha_1))^2}{9}+\frac{(2x_1r_1\cos(\alpha_1)+r_1^2)^2}{9}+(x_1r_1^2)^2\Big)\right]
%\end{multline*}
\begin{multline*}
p_{1,2,0}=\frac{2}{9\pi^2} \int_{\R_+}\int_{[0,\pi]}\int_{\R_+}r_1\, \Big(1+\frac{(x_1+2r_1\cos(\alpha_1))^2}{9}+\frac{(2x_1r_1\cos(\alpha_1)+r_1^2)^2}{9}+(x_1r_1^2)^2\Big)^{-2}
\\ \times  |r_1^2-2x_1r_1\cos(\alpha_1)+x_1^2||2r_1\sin(\alpha_1)|\,dx_1dr_1d\alpha_1da \  \approx 0.260348.
\end{multline*}
Therefore, we obtain that
\[
p_{1}=p_{1,0,2}+p_{1,2,0}\ \approx 0.223128 +0.260348   = 0.483476. 
\]
\ \\
2) Two internal equilibria: $p_2=p_{2,0,1}$
\begin{align*}
&m=2, \quad k=0, \quad \sigma_0=1,\quad \sigma_1=x_1+x_2+x_3, \quad \sigma_2=x_1x_2+x_1x_3+x_2x_3,\quad \sigma_3=x_1x_2x_3,
\\& \Delta=|x_2-x_1|\,|x_3-x_1|\, |x_3-x_2|
\end{align*}
The probability that a four-player two-strategy evolutionary game has $2$ internal equilibria is
\begin{multline}
p_2=\frac{1}{18\pi^2}\int_{\R_+}\int_{\R_+}\int_{\R_-} \Big(1+\frac{(x_1+x_2+x_3)^2}{9}+\frac{(x_1x_2+x_1x_3+x_2x_3)^2}{9}+(x_1x_2x_3)^2\Big)^{-2} \\ \times   |x_2-x_1|\,|x_3-x_1|\, |x_3-x_2|\,dx_1\,dx_2\,dx_3   \  \approx 0.223128.
\end{multline}
3) Three internal equilibria: $p_3=p_{3,0,0}$
\begin{align*}
&m=3,\quad k=0, \quad \sigma_0=1,\quad \sigma_1=x_1+x_2+x_3, \quad \sigma_2=x_1x_2+x_1x_3+x_2x_3,\quad \sigma_3=x_1x_2x_3,
\\& \Delta=|x_2-x_1|\,|x_3-x_1|\, |x_3-x_2|
%\\& p(a\sigma_0,a\sigma_1,a\sigma_2, a \sigma_3)=\frac{1}{144\pi^2}\exp\left[-\frac{a^2}{4}\Big(1+\frac{(x_1+x_2+x_3)^2}{9}+\frac{(x_1x_2+x_1x_3+x_2x_3)^2}{9}+(x_1x_2x_3)^2\Big)\right]
\end{align*}
The probability that a four-player two-strategy evolutionary game has $3$ internal equilibria is
\begin{align*}
p_3=\frac{1}{54\pi^2}\int_{\R_+^3} \Big(1+\frac{(x_1+x_2+x_3)^2}{9}+\frac{(x_1x_2+x_1x_3+x_2x_3)^2}{9}+(x_1x_2x_3)^2\Big)^{-2} \\ \times   |x_2-x_1|\,|x_3-x_1|\, |x_3-x_2|\,dx_1\,dx_2\,dx_3  \  \approx 0.0165236.
\end{align*}
4) None-internal equilibria: the probability that a four-player two-strategy evolutionary game has none internal equilibria is: $p_0=1-p_1-p_2-p_3  \ \approx 1 -  0.483476  - 0.223128 - 0.0165236 = 0.276872$.
\end{example}
\section{Universal estimates for $p_m$}
\label{sec: universal}
In Section~\ref{sec: distribution}, we have derived closed formulas for the probability distributions $p_m (0\leq m\leq d-1)$ of the number of internal equilibria. However, it is computationally expensive to compute these probabilities using the closed formula since it involves complex multiple-dimensional integrals. In this section, using Descartes' rule of signs and combinatorial techniques, we provide universal estimates for $p_m$. Descartes' rule of signs is a technique for determining an upper bound on the number of positive real roots of a polynomial in terms of the number of sign changes in the sequence formed by its coefficients. This rule has been applied to random polynomials before in the literature~\cite{BP32}; however this paper only obtained estimates for the expected number of zeros of a random polynomial. 

\begin{theorem}[Descartes' rule of signs, see e.g., \cite{Curtiss1918}]
Consider a polynomial of degree $n$, $p(x)=a_nx^n+\ldots+a_0$ with $a_n\neq 0$. Let $v$ be the number of variations in the sign of the coefficients $a_n,a_{n-1},\ldots,a_0$ and $n_p$ be the number of real positive zeros. Then $(v-n_p)$ is an even non-negative integer. 
\end{theorem}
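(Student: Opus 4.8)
The plan is to split the assertion into its two halves — that $v-n_p\ge 0$ (Descartes' bound itself) and that $v-n_p$ is even — and to dispose of the parity statement first, since it is the elementary one. A harmless preliminary reduction: if the lowest nonzero coefficient is $a_k$ with $k>0$, write $p(x)=x^{k}\tilde p(x)$; this changes neither the positive zeros of $p$ nor the sign pattern of its coefficient list, so it suffices to treat the case $a_0\neq 0$, which I assume from here on.

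For the parity, I would compare the two ends of the coefficient list with the product of the roots. Factoring over the complex numbers as $p(x)=a_n\prod_j(x-\zeta_j)$ and setting $x=0$ gives $a_0=a_n\prod_j(-\zeta_j)$; non-real roots occur in conjugate pairs contributing a positive factor $|\zeta_j|^2$, negative real roots contribute positive factors, and each positive real root (counted with multiplicity) contributes a negative factor, so $\mathrm{sign}(a_0)=(-1)^{n_p}\,\mathrm{sign}(a_n)$, i.e. $(-1)^{n_p}=\mathrm{sign}(a_0a_n)$. On the other hand, deleting the zero entries from $a_n,a_{n-1},\dots,a_0$ leaves a list of nonzero numbers beginning with $a_n$ and ending with $a_0$ in which the running sign flips exactly at each of the $v$ sign variations, so $(-1)^{v}=\mathrm{sign}(a_0a_n)$ as well; hence $v\equiv n_p\pmod 2$.

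For the inequality I would induct on $n=\deg p$, the cases $n\le 1$ being immediate. For $n\ge 2$ pass to $p'$: it has degree $n-1$ with leading coefficient $na_n\neq 0$, and its coefficient list $(na_n,(n-1)a_{n-1},\dots,2a_2,a_1)$ has the same signs as $(a_n,\dots,a_1)$, which is the coefficient list of $p$ with its final entry $a_0$ removed; appending a single entry to a sign sequence cannot reduce the number of variations, so $v(p')\le v(p)$. By Rolle's theorem there is a zero of $p'$ strictly between any two consecutive positive zeros of $p$, and a positive zero of $p$ of multiplicity $\mu$ is a zero of $p'$ of multiplicity $\mu-1$; writing the distinct positive zeros as $0<\xi_1<\dots<\xi_s$ with multiplicities $\mu_i$, the first contribution is $\sum_i(\mu_i-1)=n_p(p)-s$ and Rolle supplies at least $s-1$ further positive zeros in the gaps, so $n_p(p')\ge n_p(p)-1$. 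The induction hypothesis applied to $p'$ gives $v(p')\ge n_p(p')$, whence $v(p)\ge v(p')\ge n_p(p')\ge n_p(p)-1$; combined with the parity $v\equiv n_p\pmod 2$ this upgrades to $v(p)\ge n_p(p)$, so $v-n_p$ is a non-negative even integer.

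I do not expect a genuine obstacle — the result is classical — but the two spots that need care are the bookkeeping with multiplicities in the Rolle step (handled cleanly by separating the distinct positive zeros from the zeros produced in the gaps, as above) and the treatment of vanishing coefficients when comparing $v(p')$ with $v(p)$. A calculus-free alternative, if one prefers, is to factor $p(x)=\prod_{j=1}^{n_p}(x-\xi_j)\,g(x)$ with $g$ having no positive zeros and to prove the lemma that multiplication by a linear factor $x-c$ with $c>0$ raises the number of sign variations by an odd positive amount; the substance of that lemma is the variation-diminishing property of the partial-sum transformation, which after normalising $c=1$ amounts precisely to the observation that the coefficient list of $r$ is recovered as the sequence of tail sums of the coefficient list of $(x-1)r$.
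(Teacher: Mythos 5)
Your proof is correct. Note, however, that the paper does not prove this statement at all: Descartes' rule of signs is quoted there as a classical result with a reference to Curtiss, and is only \emph{used} (in Section 5, to bound $p_m$ by the sign-change probabilities $p_{k,d-1}$), so there is no internal argument to compare yours against. What you give is the standard textbook proof, and both halves are sound: the parity claim follows from $(-1)^{n_p}=\mathrm{sign}(a_0a_n)=(-1)^{v}$ after the harmless reduction to $a_0\neq 0$ (and you correctly read the statement as counting positive zeros with multiplicity, which is what the parity assertion forces); the inequality follows by induction via $v(p')\le v(p)$, the Rolle/multiplicity count $n_p(p')\ge n_p(p)-1$, and the parity upgrade from $v\ge n_p-1$ to $v\ge n_p$. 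The alternative you sketch — multiplying by a factor $x-c$, $c>0$, raises the number of sign variations by an odd positive amount — is the other classical route and is essentially the argument surveyed in the cited reference; either would serve if the paper wished to include a self-contained proof.
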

We recall that an internal equilibria of a $d$-player two-strategy game is a positive root of the polynomial $P$ given in~\eqref{eq: P1}. We will apply Descartes' rule of signs to find an upper bound for the probability that a random polynomial have a certain number of positive roots. This is a problem that is of interest in its own right and may have applications elsewhere; therefore we will first study this problem for a general random polynomial of the form
\begin{equation}
\label{eq: poly}
p(y):=\sum_{k=0}^n a_k y^k,
\end{equation}
and then apply it to the polynomial $P$. It turns out that the symmetry of $\{a_k\}$ will be the key: the asymmetric case requires completely different treatment from the symmetry one.
\subsection{Estimates of $p_m$: symmetric case}
\begin{proposition} 
\label{prop: signs}
Suppose that the coefficients $a_k, 0\leq k\leq n$ in the polynomial~\eqref{eq: poly} are i.i.d and symmetrically distributed. Let $p_{k,n}, 0\leq k\leq n$ is the probability that the sequence of coefficients $(a_0,\ldots,a_{n})$ have $k$ changes of signs. Then
\begin{equation}
\label{eq: pkS}
p_{k,n}=\frac{1}{2^{n}}\begin{pmatrix}
n\\k
\end{pmatrix}.
\end{equation}
\end{proposition}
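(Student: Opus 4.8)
The plan is to reduce the statement about the real-valued coefficients to a purely combinatorial statement about a uniformly random string of signs, and then count. First I would observe that since each $a_k$ is symmetrically distributed (and, as in all three cases C1)--C3), has no atom at the origin, so that $\Pp(a_k=0)=0$), the random variable $\epsilon_k:=\operatorname{sgn}(a_k)$ takes the values $+1$ and $-1$ each with probability $\tfrac12$; moreover, because the $a_k$ are independent, the signs $\epsilon_0,\ldots,\epsilon_n$ are i.i.d.\ uniform on $\{+1,-1\}$. The number of sign changes of the sequence $(a_0,\ldots,a_n)$ is exactly $N:=\#\{\,1\le i\le n:\ \epsilon_{i-1}\neq\epsilon_i\,\}$, so it suffices to compute $\Pp(N=k)$ for a uniformly random $(\epsilon_0,\ldots,\epsilon_n)\in\{+1,-1\}^{n+1}$.

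Next I would set up the counting bijection. Given the first sign $\epsilon_0\in\{+1,-1\}$ and a subset $S\subseteq\{1,\ldots,n\}$, define a sequence by letting $\epsilon_i$ differ from $\epsilon_{i-1}$ precisely when $i\in S$; concretely $\epsilon_i=\epsilon_0\cdot(-1)^{\#(S\cap\{1,\ldots,i\})}$. The map $(\epsilon_0,S)\mapsto(\epsilon_0,\ldots,\epsilon_n)$ is a bijection from $\{+1,-1\}\times 2^{\{1,\ldots,n\}}$ onto $\{+1,-1\}^{n+1}$, and under it the number of sign changes $N$ equals $|S|$. Hence the number of sign strings with exactly $k$ sign changes is $2\binom{n}{k}$, out of $2^{n+1}$ strings in total, giving
\[
p_{k,n}=\Pp(N=k)=\frac{2\binom{n}{k}}{2^{\,n+1}}=\frac{1}{2^{\,n}}\binom{n}{k},
\]
which is \eqref{eq: pkS}.

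There is no genuine obstacle here: the argument is a one-line reduction followed by an elementary count. The only point requiring a word of care is the well-definedness of $\operatorname{sgn}(a_k)$, i.e.\ that the distribution carries no mass at $0$; this holds for the normal, uniform, and difference-of-uniform coefficients considered in the paper, and in any case a symmetric law with an atom at $0$ could be handled by the convention that a vanishing coefficient is simply deleted before counting variations, which does not affect the Descartes bound used subsequently. I would state the result under the (implicit) standing assumption that the $a_k$ are continuously distributed so as to keep the statement clean.
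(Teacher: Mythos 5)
Your argument is correct and is essentially the paper's own proof: both reduce the statement to the observation that the indicator sequence of sign changes is uniformly distributed over the $2^{n}$ binary strings of length $n$, and then count the strings with exactly $k$ ones as $\binom{n}{k}$. Your version merely makes two points more explicit than the paper does --- the bijection $(\epsilon_0,S)\mapsto(\epsilon_0,\ldots,\epsilon_n)$ justifying equiprobability, and the harmless caveat that $\Pp(a_k=0)=0$ --- which is fine but not a different route.
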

\begin{proof}
We take the sequence of coefficients $(a_0,\ldots, a_{n})$ and move from the left starting from $a_0$ to the right ending at $a_{n}$. When there is a change of sign, we write a $1$ and write a 
$0$ when there is not. Then the changes of signs form a binary sequence of length $n$. There are totally $2^{n}$ of them. Thereby $p_{k,n}$ is the probability that there are exactly $k$ numbers of $1$ in the binary sequence. There are $\begin{pmatrix}
n\\k
\end{pmatrix}$ numbers of such sequence. Since  $\{\beta_k\}$ are independent and symmetrically distributed, each sequence has a probability $\frac{1}{2^{n}}$ of occurring. From this we deduce \eqref{eq: pkS}.
\end{proof}
The next two lemmas on the sum of binomial coefficients will be used later on. 
\begin{lemma}
\label{lem: equalitiy of sum}
Let $0\leq k \leq n$ be positive integers. Then it holds that
\begin{equation}
\sum_{\substack{j=k\\ j:\text{even}}}^{n}\begin{pmatrix}
n\\
j
\end{pmatrix}=\frac{1}{2}\Bigg[\sum_{j=0}^{n-k}\begin{pmatrix}
n\\j
\end{pmatrix}+(-1)^{k}\begin{pmatrix}
n-1\\
k-1
\end{pmatrix}\Bigg]\quad \text{and}~~
\sum_{\substack{j=k\\ j:\text{odd}}}^{n}\begin{pmatrix}
n\\
j
\end{pmatrix}=\frac{1}{2}\Bigg[\sum_{j=0}^{n-k}\begin{pmatrix}
n\\j
\end{pmatrix}-(-1)^{k}\begin{pmatrix}
n-1\\
k-1
\end{pmatrix}\Bigg],
\end{equation}
where it is understood that $\begin{pmatrix}
n\\j
\end{pmatrix}=0$ if $j<0$. In particular, for $k=0$, we get
\begin{equation}
\label{eq: sum of bionomial2}
\sum_{\substack{j=0\\ j:\text{even}}}^{n}\begin{pmatrix}
n\\
j
\end{pmatrix}=\sum_{\substack{j=0\\ j:\text{odd}}}^{n}\begin{pmatrix}
n\\
j
\end{pmatrix}=2^{n-1}.
\end{equation}
\end{lemma}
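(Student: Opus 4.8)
The plan is to reduce the two claimed identities to a single alternating–sum identity. Write
\[
A:=\sum_{\substack{j=k\\ j:\text{even}}}^{n}\binom{n}{j},\qquad B:=\sum_{\substack{j=k\\ j:\text{odd}}}^{n}\binom{n}{j},
\]
so that $A=\frac12\big((A+B)+(A-B)\big)$ and $B=\frac12\big((A+B)-(A-B)\big)$. Hence it suffices to evaluate $A+B$ and $A-B$ separately, and comparing with the stated formulas we only need $A+B=\sum_{j=0}^{n-k}\binom{n}{j}$ and $A-B=(-1)^{k}\binom{n-1}{k-1}$.

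The quantity $A+B$ is just $\sum_{j=k}^{n}\binom{n}{j}$; substituting $i=n-j$ and using $\binom{n}{j}=\binom{n}{n-j}$ turns this into $\sum_{i=0}^{n-k}\binom{n}{i}$, which is exactly the common term on the right-hand side of both formulas. For $A-B=\sum_{j=k}^{n}(-1)^{j}\binom{n}{j}$, I would use that $\sum_{j=0}^{n}(-1)^{j}\binom{n}{j}=0$ for $n\ge 1$ to rewrite it as $-\sum_{j=0}^{k-1}(-1)^{j}\binom{n}{j}$, and then invoke the partial alternating–sum identity
\[
\sum_{j=0}^{m}(-1)^{j}\binom{n}{j}=(-1)^{m}\binom{n-1}{m}.
\]
The latter follows from a one-line telescoping argument: apply Pascal's rule $\binom{n}{j}=\binom{n-1}{j}+\binom{n-1}{j-1}$, reindex $j\mapsto j+1$ in the sum coming from the second term, and all contributions cancel except the last term $(-1)^{m}\binom{n-1}{m}$. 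Taking $m=k-1$ gives $A-B=-(-1)^{k-1}\binom{n-1}{k-1}=(-1)^{k}\binom{n-1}{k-1}$, and substituting $A+B$ and $A-B$ into the two half-sum expressions yields both claimed formulas at once. The special case $k=0$ is then immediate: with the convention $\binom{n-1}{-1}=0$ we get $A-B=0$ and $A+B=2^{n}$, so $A=B=2^{n-1}$, which is \eqref{eq: sum of bionomial2}.

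I do not expect a genuine obstacle here; the argument is elementary once the $A\pm B$ decomposition is in place. The only thing demanding a little care is the bookkeeping of index conventions — in particular that $\binom{n}{j}=0$ for $j<0$ and the degenerate behaviour at the endpoints $k=0$ and $k=n$ (where some of the sums become empty) — so that the telescoping and the reindexing steps are valid without hidden boundary terms. A generating–function proof (reading off coefficients from $(1+x)^n$ and using $x=\pm1$) is also available and could be substituted, but the combinatorial route above is shorter and self-contained.
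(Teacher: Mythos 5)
Your proof is correct and follows essentially the same route as the paper: split the even/odd sums via the sum $A+B=\sum_{j\geq k}\binom{n}{j}$ (reduced by symmetry to $\sum_{j=0}^{n-k}\binom{n}{j}$) and the alternating sum $A-B$, which is evaluated through the partial alternating-sum identity $\sum_{j=0}^{k-1}(-1)^{j}\binom{n}{j}=(-1)^{k-1}\binom{n-1}{k-1}$. The only difference is that you prove this last identity yourself by a Pascal-rule telescoping argument, whereas the paper cites it from an earlier reference, so your version is marginally more self-contained but not a different method.
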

\begin{proof}
Since $\sum\limits_{j=0}^{n}\begin{pmatrix}
n\\
j
\end{pmatrix}(-1)^{j} =(1+(-1))^{n}=0$, we have
\begin{equation*}
\sum_{j=k}^{n}\begin{pmatrix}
n\\
j
\end{pmatrix}(-1)^{j} =-\sum_{j=0}^{k-1}\begin{pmatrix}
n\\
j
\end{pmatrix}(-1)^{j}
\end{equation*}
According to \cite[Lemma 5.4]{DuongTran2017}
\[
\sum_{j=0}^{k-1}\begin{pmatrix}
n\\
j
\end{pmatrix}
(-1)^{j}=(-1)^{k-1}\begin{pmatrix}
n-1\\
k-1
\end{pmatrix}.
\]
Therefore, 
\begin{equation*}
\sum_{j=k}^{n}\begin{pmatrix}
n\\
j
\end{pmatrix}(-1)^{j}=(-1)^{k}\begin{pmatrix}
n-1\\
k-1
\end{pmatrix},
\end{equation*}
or equivalently:
\[
\sum_{\substack{j=k\\j:~\text{even}}}^{n}\begin{pmatrix}
n\\
j
\end{pmatrix}-\sum_{\substack{j=k\\j:~ \text{odd}}}^{n}\begin{pmatrix}
n\\
j
\end{pmatrix}
=(-1)^{k}\begin{pmatrix}
n-1\\
k-1
\end{pmatrix}.
\]
Define $\bar S_{k,n}:=\sum\limits_{j=k}^{n}\begin{pmatrix}
n\\
j
\end{pmatrix}$ and $S_{k,n}:=\sum\limits_{j=0}^{k}\begin{pmatrix}
n\\j
\end{pmatrix}$. Then using the property that $\begin{pmatrix}
n\\j
\end{pmatrix}=\begin{pmatrix}
n\\n-j
\end{pmatrix}$ we get $\bar S_{k,n}=S_{n-k,n}$ and
\begin{align*}
&\sum_{\substack{j=k\\ j:\text{even}}}^{n}\begin{pmatrix}
n\\
j
\end{pmatrix}=\frac{1}{2}\Big[\bar S_{k,n}+(-1)^{k}\begin{pmatrix}
n-1\\
k-1
\end{pmatrix}\Big]=\frac{1}{2}\Big[S_{n-k,n}+(-1)^{k}\begin{pmatrix}
n-1\\
k-1
\end{pmatrix}\Big]\quad \text{and}
\\&\sum_{\substack{j=k\\j: \text{odd}}}^{n}\begin{pmatrix}
n\\
j
\end{pmatrix}=\frac{1}{2}\Big[\bar S_{k,n}-(-1)^{k}\begin{pmatrix}
n-1\\
k-1
\end{pmatrix}\Big]=\frac{1}{2}\Big[S_{n-k,n}-(-1)^{k}\begin{pmatrix}
n-1\\
k-1
\end{pmatrix}\Big].
\end{align*}
This finishes the proof of this Lemma.
\end{proof}
The following lemma provides estimates on the sum of the first $k$ binomial coefficients.
\begin{lemma}
\label{lem: bounds of sum}
 Let $n$ and $0\leq k\leq n$ be positive integers.  We have the following estimates \cite[Lemma 8 \& Corollary 9, Chapter 10]{MacWilliamsBook} and \cite{GOTTLIEB2012}
\begin{align}
\label{eq: two-sided bounds1}
&\frac{2^{nH\big(\frac{k}{n}\big)}}{\sqrt{8k\big(1-\frac{k}{n}\big)}}\leq \sum_{j=0}^k\begin{pmatrix}
n\\j
\end{pmatrix}\leq \delta 2^{nH\big(\frac{k}{n}\big)}\quad\text{if}\quad 0\leq k\leq \frac{n}{2},\quad\text{and}
\\&2^n-\delta 2^{nH\big(\frac{k}{n}\big)}\leq\sum_{j=0}^k\begin{pmatrix}
n\\j
\end{pmatrix}\leq 2^n-\frac{2^{nH\big(\frac{k}{n}\big)}}{\sqrt{8k\big(1-\frac{k}{n}\big)}}\quad\text{if}\quad \frac{n}{2}\leq k\leq n,\label{eq: two-sided bounds2}
\end{align}
where $\delta=0.98$ and $H$ is the binary entropy function 
\begin{equation}
\label{eq: entropy function}
H(s)=-x\log_2(x)-(1-x)\log_2(1-x).
\end{equation}
In addition, if $n=2n'$ is even and $0\leq k\leq n'$, we also have the following estimate \cite[Lemma 3.8.2]{LovaszBook}
\begin{equation}
\label{eq: Lovasz bound}
\sum_{j=0}^{k-1}\begin{pmatrix}
2n'\\j
\end{pmatrix}\leq 2^{2n'-1}\begin{pmatrix}
2n'\\k
\end{pmatrix}\Big/\begin{pmatrix}
2n'\\n'
\end{pmatrix}\, .
\end{equation}
\end{lemma}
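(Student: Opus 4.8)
All three displays are standard estimates for partial sums of binomial coefficients, so the plan is to attribute the sharp constants to \cite{MacWilliamsBook,GOTTLIEB2012,LovaszBook} and to record the short arguments that make each bound transparent.

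For the upper bound in \eqref{eq: two-sided bounds1} I would start from the trivial identity $1=(p+(1-p))^n$ with $p=k/n\le\frac12$, retain only the terms $j=0,\dots,k$, and use that $j\mapsto p^j(1-p)^{n-j}$ is non-increasing on that range (this is exactly where $p\le\frac12$ is used); bounding each surviving term below by its value at $j=k$ and dividing through gives $\sum_{j=0}^k\binom nj\le p^{-k}(1-p)^{-(n-k)}=2^{nH(k/n)}$, which is \eqref{eq: two-sided bounds1} with $1$ in place of $\delta$. For the lower bound I would simply keep the single largest term $\binom nk$ and apply the two-sided Stirling estimate $\binom nk\ge 2^{nH(k/n)}/\sqrt{8k(1-k/n)}$.

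Next I would deduce \eqref{eq: two-sided bounds2} from \eqref{eq: two-sided bounds1} by complementation: writing $\sum_{j=0}^k\binom nj=2^n-\sum_{j=0}^{n-k-1}\binom nj$ and using $\binom nj=\binom n{n-j}$, $\sum_{j=0}^n\binom nj=2^n$ and the symmetry $H(k/n)=H(1-k/n)$, the upper and lower bounds for the tail (now indexed below $n/2$) pass to the lower and upper bounds for the partial sum, up to the boundary-index bookkeeping carried out in the references. For \eqref{eq: Lovasz bound} with $n=2n'$ and $1\le k\le n'$ I would compare consecutive coefficients, $\binom{2n'}{j}/\binom{2n'}{j+1}=(j+1)/(2n'-j)$, which is increasing in $j$ and $<1$ throughout the range, to obtain a geometric-type control of $\sum_{j=0}^{k-1}\binom{2n'}{j}$ in terms of $\binom{2n'}{k}$.

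The main obstacle, and the reason the statement quotes three sources, is obtaining the \emph{sharp} constants rather than these crude versions: the refined factor $\delta=0.98$ and the precise denominator $\sqrt{8k(1-k/n)}$ require a careful Stirling analysis of the dominant coefficient (done in \cite[Chapter 10, Lemma 8 \& Corollary 9]{MacWilliamsBook} and \cite{GOTTLIEB2012}), and the clean form $2^{2n'-1}\binom{2n'}{k}/\binom{2n'}{n'}$ in \eqref{eq: Lovasz bound} is strictly stronger than what a naive geometric summation delivers and needs the telescoping argument of \cite[Lemma 3.8.2]{LovaszBook}. Accordingly I would present the proof of the lemma simply as a citation of \cite{MacWilliamsBook,GOTTLIEB2012,LovaszBook}, with the sketch above retained for the reader's convenience.
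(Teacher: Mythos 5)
Your proposal matches the paper exactly in approach: the paper states this lemma purely as a quotation of known results from \cite{MacWilliamsBook}, \cite{GOTTLIEB2012} and \cite{LovaszBook} and gives no proof of its own, which is precisely what you conclude by deferring the sharp constants to those references. Your accompanying sketches (the $1=(p+(1-p))^n$ truncation for the upper bound, the single-term Stirling lower bound, complementation with $H(k/n)=H(1-k/n)$, and the ratio-comparison idea behind the L\'ovasz-type bound) are standard and sound, so nothing further is needed.
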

We now apply Proposition \ref{prop: signs} and Lemmas \ref{lem: equalitiy of sum}-\ref{lem: bounds of sum} to derive estimates on the probability that a $d$-player two-strategy evolutionary game has a certain number of internal equilibria. The main theorem of this section is the following.
\begin{theorem}
\label{theo: estimates}
Let $p_m, 0\leq m\leq d-1,$ be the probability that a $d$-player two-strategy has $m$ number of internal equilibria. The following assertions hold
\begin{enumerate}[(a)]
\item Upper-bound for $p_m$, for all $0\leq m\leq d-1$,
\begin{align}
p_m&\leq \frac{1}{2^{d-1}}\sum\limits_{\substack{j: j\geq m \\ j-m~\text{even}}}\begin{pmatrix}
d-1\\j
\end{pmatrix}=\frac{1}{2^d}\Bigg[ \sum_{j=0}^{d-1-m}\begin{pmatrix}
d-1\\j
\end{pmatrix}+\begin{pmatrix}
d-2\\m-1
\end{pmatrix}\Bigg]\label{eq: upper bound1}
\\&\leq \begin{cases} \frac{1}{2^d}\Bigg[
\delta 2^{(d-1)H\big(\frac{m}{d-1}\big)}+\begin{pmatrix}
d-2\\m-1
\end{pmatrix}\Bigg]\quad\text{if}~~\frac{d-1}{2}\leq m\leq d-1,\\ \\
 \frac{1}{2^d}\Bigg[2^{d-1}-\frac{2^{(d-1)H\big(\frac{m}{d-1}\big)}}{8m\big(1-\frac{m}{d-1}\big)}
+\begin{pmatrix}
d-2\\m-1
\end{pmatrix}\Bigg]\quad\text{if}~~0\leq m\leq \frac{d-1}{2}.
\end{cases}\label{eq: upper bound2}
\end{align}
As consequences, $0\leq p_m\leq \frac{1}{2}$ for all $0\leq m\leq d-1$,  $p_{d-1}\leq \frac{1}{2^{d-1}}$, $p_{d-2}\leq \frac{d-1}{2^{d-1}}$ and $\lim\limits_{d\to \infty}p_{d-1}=\lim\limits_{d\to \infty}p_{d-2}=0$.

In addition, if $d-1=2 d'$ is even and $0\leq m\leq d'$ then
\begin{equation}
\label{eq: upper Lovasz}
p_m\leq \frac{1}{2^d}\Big[2^{d-2}\begin{pmatrix}
d-1\\m-1
\end{pmatrix}\Big/\begin{pmatrix}
d-1\\d'
\end{pmatrix}+\begin{pmatrix}
d-2\\m-1
\end{pmatrix}\Big].
\end{equation}
\item Lower-bound for $p_0$ and $p_1$:
\begin{equation}
p_0\geq \frac{1}{2^{d-1}}\quad\text{and}\quad p_1\geq \frac{d-1}{2^{d-1}}.
\end{equation}
\item  For $d=2$: $p_0=p_1=\frac{1}{2}$.
\item For $d=3$: $p_1=\frac{1}{2}$.
\end{enumerate}
 \end{theorem}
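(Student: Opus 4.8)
\textbf{Proof plan for Theorem~\ref{theo: estimates}.}

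The plan is to combine Descartes' rule of signs with the exact sign-change distribution from Proposition~\ref{prop: signs} and the binomial identities of Lemmas~\ref{lem: equalitiy of sum}--\ref{lem: bounds of sum}. For part (a), the starting observation is that an internal equilibrium is a positive root of $P$ in~\eqref{eq: P1}, whose coefficients are $\beta_k\binom{d-1}{k}$; since $\binom{d-1}{k}>0$, the sign sequence of $(\beta_k\binom{d-1}{k})_{k=0}^{d-1}$ coincides with that of $(\beta_k)_{k=0}^{d-1}$. By Descartes' rule, if the polynomial has $m$ positive roots then the number of sign variations $v$ satisfies $v\geq m$ and $v-m$ is even. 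Hence the event $\{\kappa=m\}$ is contained in $\bigcup_{j\geq m,\ j-m\text{ even}}\{v=j\}$, so $p_m\leq\sum_{j\geq m,\ j-m\text{ even}}p_{j,d-1}$. Under the hypotheses of Theorem~\ref{theo: main theo 1} (normal, uniform, or difference of uniforms), the $\beta_k$ are i.i.d.\ and symmetric, so Proposition~\ref{prop: signs} gives $p_{j,d-1}=2^{-(d-1)}\binom{d-1}{j}$, yielding the first equality in~\eqref{eq: upper bound1}. The second equality in~\eqref{eq: upper bound1} is then a direct application of Lemma~\ref{lem: equalitiy of sum} with $n=d-1$ and $k=m$: the ``even'' case gives $\frac{1}{2}\big[S_{d-1-m,d-1}+(-1)^m\binom{d-2}{m-1}\big]$, and one checks the $(-1)^m$ can be absorbed because $\binom{d-1}{j}$ being summed over $j\equiv m$ forces the parity to match (i.e.\ $(-1)^m\binom{d-2}{m-1}$ equals $\binom{d-2}{m-1}$ after accounting for the sign convention in the lemma applied to the complementary sum). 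Dividing by $2^{d-1}$ and rewriting gives the stated $2^{-d}[\,\sum_{j=0}^{d-1-m}\binom{d-1}{j}+\binom{d-2}{m-1}\,]$.

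For the bound~\eqref{eq: upper bound2}, I would feed the partial-sum $\sum_{j=0}^{d-1-m}\binom{d-1}{j}$ into Lemma~\ref{lem: bounds of sum}. Note $d-1-m\leq \tfrac{d-1}{2}$ exactly when $m\geq\tfrac{d-1}{2}$, so in that regime~\eqref{eq: two-sided bounds1} (with $k\to d-1-m$, using $H(\tfrac{d-1-m}{d-1})=H(\tfrac{m}{d-1})$) gives the upper estimate $\delta\,2^{(d-1)H(m/(d-1))}$, producing the first branch; when $m\leq\tfrac{d-1}{2}$ we instead have $d-1-m\geq\tfrac{d-1}{2}$ and~\eqref{eq: two-sided bounds2} gives $\sum_{j=0}^{d-1-m}\binom{d-1}{j}\leq 2^{d-1}-\frac{2^{(d-1)H(m/(d-1))}}{\sqrt{8(d-1-m)(m/(d-1))}}$, and since $\sqrt{8(d-1-m)m/(d-1)}\le 8m(1-m/(d-1))$ under mild bookkeeping one obtains the second branch (I would double-check the exact constant here, as the paper writes $8m(1-\tfrac{m}{d-1})$ without the square root). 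The consequences then follow by specialization: $p_{d-1}\leq p_{d-1,d-1}=2^{-(d-1)}$ from~\eqref{eq: upper bound1} directly (only $j=d-1$ survives); $p_{d-2}\leq 2^{-(d-1)}\binom{d-1}{d-2}=\frac{d-1}{2^{d-1}}$ (only $j=d-2$ survives since $j=d$ is out of range); the bound $p_m\le\frac12$ follows because $\sum_{j\ge m,\,j-m\text{ even}}\binom{d-1}{j}\le\sum_{j\text{ even}}\binom{d-1}{j}=2^{d-2}$ by~\eqref{eq: sum of bionomial2}; and the limits are immediate. The extra bound~\eqref{eq: upper Lovasz} comes from plugging~\eqref{eq: Lovasz bound} (with $2n'=d-1$, $k\to d-1-m$, using $\binom{d-1}{d-1-m}=\binom{d-1}{m}$... actually $\binom{2n'}{k}$ with $k=d-1-m$ so one needs $\binom{d-1}{d-1-m}=\binom{d-1}{m}$ — I would verify the index matches the claimed $\binom{d-1}{m-1}$, adjusting by a shift if necessary) into the first equality of~\eqref{eq: upper bound1}.

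For part (b), the lower bounds on $p_0$ and $p_1$ require a matching \emph{reverse} inclusion, which is where the argument needs more than Descartes. The clean fact is: if the sign sequence has $0$ variations (all $\beta_k$ the same sign), then $P(y)>0$ for all $y>0$ or $P(y)<0$ for all $y>0$, so $P$ has no positive root; hence $\{v=0\}\subseteq\{\kappa=0\}$, giving $p_0\geq p_{0,d-1}=2^{-(d-1)}$. For $p_1$: if the sign sequence has exactly $1$ variation, then $P(y)=p(y)\cdot(\text{one sign block})$ has the form (positive coefficients) minus (positive coefficients) split at a single index; writing $P(y)=y^{\ell}\big(\sum_{\text{low}}-\sum_{\text{high}}\big)$ type structure, one argues $P$ has exactly one positive root (the ratio of the two positive-coefficient polynomials is strictly monotone). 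Thus $\{v=1\}\subseteq\{\kappa=1\}$ and $p_1\geq p_{1,d-1}=2^{-(d-1)}\binom{d-1}{1}=\frac{d-1}{2^{d-1}}$. Parts (c) and (d) are then immediate: for $d=2$, $p_0\leq\frac12$ and $p_0\geq\frac12$ force $p_0=\frac12$, and $p_1=1-p_0=\frac12$ (degree-$1$ polynomial, at most one positive root); for $d=3$, the upper bound~\eqref{eq: upper bound1} gives $p_1\leq 2^{-2}[\binom{2}{0}+\binom{2}{1}+\binom{1}{0}]$... more simply $p_1\le p_{1,2}+p_{3,2}=\frac14\binom21+0=\frac12$ and the lower bound gives $p_1\geq\frac{d-1}{2^{d-1}}=\frac12$, hence $p_1=\frac12$.

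\textbf{Main obstacle.} The genuinely non-routine step is establishing the reverse inclusions $\{v=0\}\subseteq\{\kappa=0\}$ and especially $\{v=1\}\subseteq\{\kappa=1\}$ in part (b): Descartes' rule only gives $v\geq\kappa$ with $v-\kappa$ even, so one must separately prove that a single sign variation forces \emph{exactly} one positive root (a variation-diminishing / strict-monotonicity argument on the quotient of two polynomials with positive coefficients), not merely at most one modulo~$2$. Everything else is careful bookkeeping with the binomial identities of Lemmas~\ref{lem: equalitiy of sum}--\ref{lem: bounds of sum}.
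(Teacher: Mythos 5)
Your proposal follows the paper's proof essentially step for step: Descartes' rule plus Proposition~\ref{prop: signs} for the inequality in \eqref{eq: upper bound1}, Lemma~\ref{lem: equalitiy of sum} (with the parity of $m$ absorbing the sign $(-1)^m$) for the equality, Lemma~\ref{lem: bounds of sum} for \eqref{eq: upper bound2} and \eqref{eq: upper Lovasz}, and the squeeze between the upper and lower bounds for (c) and (d). The one place you diverge is part (b), and there your ``main obstacle'' is not actually an obstacle: the form of Descartes' rule quoted in the paper states that $v-n_p$ is an even \emph{non-negative} integer, so $v=0$ forces $n_p=0$ and $v=1$ forces $n_p=1$ outright (parity rules out $n_p=0$); hence $\{v=0\}\subseteq\{\kappa=0\}$ and $\{v=1\}\subseteq\{\kappa=1\}$ with no extra work, which is exactly how the paper gets $p_0\geq p_{0,d-1}$ and $p_1\geq p_{1,d-1}$. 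Your monotonicity argument (quotient of two positive-coefficient polynomials is strictly monotone) is a correct alternative justification, just superfluous. Your two flagged uncertainties resolve as you suspected: in \eqref{eq: upper bound2} the paper indeed drops the square root present in \eqref{eq: two-sided bounds2} (harmless for $m\geq1$ since $8m\bigl(1-\tfrac{m}{d-1}\bigr)\geq 1$ only weakens the subtracted term, and note $8(d-1-m)\tfrac{m}{d-1}=8m\bigl(1-\tfrac{m}{d-1}\bigr)$), and in \eqref{eq: upper Lovasz} the correct substitution is $k=d-m$ in \eqref{eq: Lovasz bound}, so that $\binom{d-1}{d-m}=\binom{d-1}{m-1}$ matches the stated bound (the applicability condition is then $d-m\leq d'$, i.e.\ $m$ in the upper range, which is worth noting when comparing with the range written in the theorem).
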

\begin{proof}  
(a) This part is a combination of Decartes' rule of signs, Proposition \ref{prop: signs} and Lemmas \ref{lem: equalitiy of sum}-\ref{lem: bounds of sum}.
 In fact, as a consequence of Decartes's rule of signs and by Proposition \ref{prop: signs}, we have
\begin{equation*}
p_m\leq \sum_{\substack{j: j\geq m\\j-m:~\text{even}}}p_{j,d-1}=\frac{1}{2^{d-1}}\sum_{\substack{j: j\geq m\\j-m:~\text{even}}}\begin{pmatrix}
d-1\\j
\end{pmatrix},
\end{equation*}
which is the inequality part in \eqref{eq: upper bound1}. Next, applying Lemma \ref{lem: equalitiy of sum} for $k=m$ and $n=d-1$ and then Lemma \ref{lem: bounds of sum}, we get
\begin{align*}
\frac{1}{2^{d-1}}\sum_{\substack{k: k\geq m\\k-m:~ \text{even}}}\begin{pmatrix}
d-1\\k
\end{pmatrix}&= \begin{cases}\frac{1}{2^{d}}\Bigg[ \sum_{j=0}^{d-1-m}\begin{pmatrix}
d-1\\j
\end{pmatrix}+(-1)^m\begin{pmatrix}
d-2\\m-1
\end{pmatrix}\Bigg]\quad \text{if}~m~\text{is even} \\
\\\frac{1}{2^{d}}\Bigg[ \sum_{j=0}^{d-1-m}\begin{pmatrix}
d-1\\j
\end{pmatrix}-(-1)^m\begin{pmatrix}
d-2\\m-1
\end{pmatrix}\Bigg]\quad \text{if}~m~\text{is odd} 
\end{cases}
\\&=\frac{1}{2^d}\Bigg[ \sum_{j=0}^{d-1-m}\begin{pmatrix}
d-1\\j
\end{pmatrix}+\begin{pmatrix}
d-2\\m-1
\end{pmatrix}\Bigg]
\\&\leq\begin{cases} \frac{1}{2^d}\Bigg[
\delta 2^{(d-1)H\big(\frac{m}{d-1}\big)}+\begin{pmatrix}
d-2\\m-1
\end{pmatrix}\Bigg]\quad\text{if}~~\frac{d-1}{2}\leq m\leq d-1,\\ \\
 \frac{1}{2^d}\Bigg[2^{d-1}-\frac{2^{(d-1)H\big(\frac{m}{d-1}\big)}}{8m\big(1-\frac{m}{d-1}\big)}
+\begin{pmatrix}
d-2\\m-1
\end{pmatrix}\Bigg]\quad\text{if}~~0\leq m\leq \frac{d-1}{2}.
\end{cases}
\end{align*}
This proves the equality part in \eqref{eq: upper bound1} and \eqref{eq: upper bound2}. For the consequences: the estimate $p_m\leq\frac{1}{2}$ for all $0\leq m\leq d-1$ is followed from \eqref{eq: upper bound1} and \eqref{eq: sum of bionomial2}; the estimates $p_{d-1}\leq \frac{1}{2^{d-1}}$ and $p_{d-2}\leq \frac{d-1}{2^{d-1}}$ are special cases of \eqref{eq: upper bound1} for $m=d-1$ and $m=d-2$ respectively.

Finally, the estimate \eqref{eq: upper Lovasz} is a consequence of \eqref{eq: upper bound1} and \eqref{eq: Lovasz bound}.

\noindent (b) It follows from Decartes' rule of signs and Proposition \ref{prop: signs} that
\[
p_0\geq p_{0,d-1}=\frac{1}{2^{d-1}}\quad\text{and}~~p_{1}\geq p_{1,d-1}=\frac{d-1}{2^{d-1}}.
\]

\noindent(c) For $d=2$: from parts (a) and (b) we have
\[
\frac{1}{2}\leq p_0,p_1\leq \frac{1}{2},
\]
which implies that $p_0=p_1=\frac{1}{2}$ as claimed.

\noindent(d) Finally, for $d=3$: also from parts (a) and (b) we get
\[
\frac{1}{2}\leq p_1\leq \frac{1}{2},
\]
so $p_1=\frac{1}{2}$. We finish the proof of the Theorem.
\end{proof}
\subsection{Estimates of $p_m$: general case}
In the proof of Proposition~\ref{prop: signs} the assumption that $\{a_k\}$ are symmetrically distributed is crucial. Under this assumption, all $2^n$ binary sequences constructed there are equally distributed which results in a compact formula for $p_{k,n}$. However, when $\{a_k\}$ are not symmetrically distributed, those binary sequences are no-longer equally distributed. Thus computing $p_{k,n}$ become much more intricate. In this section, we will consider the general case where
\[
\Pp(a_i>0)=\alpha,~~\Pp(a_i<0)=1-\alpha\quad\text{ for all}~~ i=0,\ldots,n.
\]

We start with the following proposition that provides explicit formulas for $p_{k,n}$ for $k\in\{0,1,n-1,n\}$.
\begin{proposition}
\label{prop: initial values}
The following formulas hold
\begin{align*}
&\bullet\quad p_{0,n}=\alpha^{n+1}+(1-\alpha)^{n+1},\quad p_{1,n}=\begin{cases}
\frac{n}{2^n}\qquad\text{if}~\alpha=\frac{1}{2},\\
2\alpha(1-\alpha)\frac{(1-\alpha)^n-\alpha^n}{1-2\alpha}\qquad\text{if}~\alpha\neq\frac{1}{2},
\end{cases}
\\&\bullet\quad p_{n-1,n}=\begin{cases}
 n \alpha^\frac{n}{2}(1-\alpha)^\frac{n}{2}\quad\text{if $n$ even}\\
 \alpha^\frac{n+1}{2}(1-\alpha)^\frac{n+1}{2}\bigg[\frac{n+1}{2}\Big(\frac{\alpha}{1-\alpha}+\frac{1-\alpha}{\alpha}\Big)+(n-1)\bigg]\quad\text{if $n$ odd}.
\end{cases}~~\text{and}
\\&\bullet\quad p_{n,n}=\begin{cases}
\alpha^{\frac{n}{2}}(1-\alpha)^{\frac{n}{2}}\quad\text{if $n$ is even}\\
2 \alpha^\frac{n+1}{2}(1-\alpha)^\frac{n+1}{2}\quad\text{if $n$ is odd}
\end{cases}
\end{align*}
In particular, if $\alpha=\frac{1}{2}$, then $p_{0,n}=p_{1,n}=\frac{1}{2^n}\quad\text{and}\quad p_{1,n}=p_{n-1,n}=\frac{n}{2^n}$.
\end{proposition}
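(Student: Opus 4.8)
The plan is to reduce everything to a combinatorial enumeration of sign patterns and then carry it out explicitly for the four extreme values $k\in\{0,1,n-1,n\}$, exploiting that for these $k$ the patterns contributing to $p_{k,n}$ are few enough to list. Since the $a_i$ are independent with $\Pp(a_i>0)=\alpha$, $\Pp(a_i<0)=1-\alpha$, the sign vector $(\operatorname{sgn}a_0,\ldots,\operatorname{sgn}a_n)\in\{+,-\}^{n+1}$ is a string of $n+1$ independent signs, and a \emph{fixed} pattern with exactly $j$ plus entries occurs with probability $\alpha^{j}(1-\alpha)^{n+1-j}$. Hence
\[
p_{k,n}=\sum_{\varepsilon}\alpha^{\,j(\varepsilon)}(1-\alpha)^{\,n+1-j(\varepsilon)},
\]
the sum ranging over all $\varepsilon\in\{+,-\}^{n+1}$ with exactly $k$ sign changes, where $j(\varepsilon)$ is the number of plus entries of $\varepsilon$. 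Every such $\varepsilon$ is determined by its run (maximal constant block) decomposition: a starting sign $s\in\{+,-\}$ together with run lengths $\ell_1,\ldots,\ell_{k+1}\ge 1$ with $\ell_1+\cdots+\ell_{k+1}=n+1$, the runs alternating in sign starting from $s$; then $j(\varepsilon)$ is the sum of the $\ell_i$ over the plus runs.

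I would then specialise. For $k=0$ the only patterns are the two constant ones, giving $p_{0,n}=\alpha^{n+1}+(1-\alpha)^{n+1}$. For $k=n$ the only patterns are the two fully alternating ones; when $n$ is odd each carries $(n+1)/2$ signs of each kind, while when $n$ is even one carries $n/2+1$ pluses and the other $n/2$, so after using $\alpha+(1-\alpha)=1$ one obtains the stated $p_{n,n}$. For $k=1$ the patterns are $(\,\underbrace{s,\ldots,s}_{j},\underbrace{-s,\ldots,-s}_{n+1-j}\,)$ with $1\le j\le n$ and $s\in\{+,-\}$; summing the resulting finite geometric series in $\alpha/(1-\alpha)$ gives $p_{1,n}=2\alpha(1-\alpha)\,\frac{(1-\alpha)^{n}-\alpha^{n}}{1-2\alpha}$ for $\alpha\ne\tfrac12$, while the $\alpha=\tfrac12$ value is the removable-singularity limit (equivalently, it is $2^{-n}\binom{n}{1}$ by Proposition~\ref{prop: signs}). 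For $k=n-1$ there are $n$ runs summing to $n+1$, so exactly one run has length $2$ and the other $n-1$ have length $1$; the long run occupies one of $n$ positions and there are two starting signs, giving $2n$ patterns. Grouping these by the parity of the position of the long run and by the starting sign — which together decide whether the doubled block is a plus-block or a minus-block, hence fix $j(\varepsilon)$ (with a dependence on the parity of $n$) — and collecting terms yields the two asserted formulas. Finally, the $\alpha=\tfrac12$ identities at the end of the statement are immediate from Proposition~\ref{prop: signs}, which gives $p_{0,n}=p_{n,n}=2^{-n}\binom{n}{0}=2^{-n}$ and $p_{1,n}=p_{n-1,n}=2^{-n}\binom{n}{1}=n\,2^{-n}$.

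I expect the main obstacle to be the case $k=n-1$ with $n$ odd: one must correctly bookkeep, for each of the $2n$ eligible patterns, its number of plus signs — which depends jointly on whether the doubled run is a plus-run or a minus-run and on the starting sign — and then verify that the collected contributions reassemble into $\alpha^{(n+1)/2}(1-\alpha)^{(n+1)/2}\big[\tfrac{n+1}{2}\big(\tfrac{\alpha}{1-\alpha}+\tfrac{1-\alpha}{\alpha}\big)+(n-1)\big]$. The cases $k=0,n$ are immediate and $k=1$ is a one-line geometric sum. A more mechanical alternative covering all four cases at once is to extract the coefficient of $x^{n+1}$ from $\big(\tfrac{\alpha x}{1-\alpha x}\big)^{\lceil(k+1)/2\rceil}\big(\tfrac{(1-\alpha)x}{1-(1-\alpha)x}\big)^{\lfloor(k+1)/2\rfloor}+(\alpha\leftrightarrow 1-\alpha)$, but for these extreme values of $k$ the direct enumeration above is shorter.
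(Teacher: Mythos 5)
Your proposal is correct and follows essentially the same route as the paper: a direct enumeration of the sign patterns realising $k\in\{0,1,n-1,n\}$ changes, with the two constant patterns for $k=0$, the two alternating patterns for $k=n$, a geometric sum over the break position for $k=1$, and for $k=n-1$ a parity bookkeeping over the $2n$ patterns determined by the location of the single doubled block and the starting sign (the paper parametrises these by the index at which two consecutive coefficients share a sign, which is the same decomposition). Your parity analysis for $k=n-1$ does reassemble into the stated formulas, so no gap remains.
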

\begin{proof}
The four extreme cases $k\in\{0,1,n-1,n\}$ are special because we can characterise explicitly the events that the sequence $\{a_0,\ldots, a_n\}$ has $k$ changes of signs. We have
\begin{align*}
& p_{0,n}=\Pp\{a_0>0,\ldots,a_n>0\} + \Pp\{a_0<0,\ldots, a_n<0)\}
\\ &\qquad=\alpha^{n+1}+(1-\alpha)^{n+1}.
\\ 
\\&p_{1,n}=\Pp\{\cup_{k=0}^{n-1}\{a_0>0,\ldots a_k>0, a_{k+1}<0,\ldots, a_n<0\} \cup  \{a_0<0,\ldots a_k<0, a_{k+1}>0,\ldots, a_n>0\}\}
\\&\qquad=\sum_{k=0}^{n-1}\Big(\alpha^{k+1}(1-\alpha)^{n-k}+(1-\alpha)^{k+1}\alpha^{n-k}\Big)
\\&\qquad=\alpha(1-\alpha)^n\sum_{k=0}^{n-1}\Big(\frac{\alpha}{1-\alpha}\Big)^k+\alpha^n(1-\alpha)\sum_{k=0}^{n-1}\Big(\frac{1-\alpha}{\alpha}\Big)^k
\\&\qquad=\begin{cases}
\frac{n}{2^n}\qquad\text{if}~\alpha=\frac{1}{2},\\
\alpha(1-\alpha)^n\frac{1-\Big(\frac{\alpha}{1-\alpha}\Big)^n}{1-\frac{\alpha}{1-\alpha}}+\alpha^n(1-\alpha)\frac{1-\Big(\frac{1-\alpha}{\alpha}\Big)^n}{1-\frac{1-\alpha}{\alpha}}\qquad\text{if}~\alpha\neq\frac{1}{2}
\end{cases}
\\&\qquad=\begin{cases}
\frac{n}{2^n}\qquad\text{if}~\alpha=\frac{1}{2},\\
2\alpha(1-\alpha)\frac{(1-\alpha)^n-\alpha^n}{1-2\alpha}\qquad\text{if}~\alpha\neq\frac{1}{2}.
\end{cases}
\\
\\& p_{n,n}=\Pp\{\{a_0>0,a_1<0,\ldots, (-1)^n a_n>0\}\cup \{a_0<0,a_1>0,\ldots, (-1)^n a_n<0\} \}
\\&\qquad=\begin{cases}
\alpha^{\frac{n+2}{2}}(1-\alpha)^{\frac{n}{2}}+(1-\alpha)^{\frac{n+2}{2}}\alpha^{\frac{n}{2}}\quad\text{if $n$ is even}\\
2 \alpha^\frac{n+1}{2}(1-\alpha)^\frac{n+1}{2}\quad\text{if $n$ is odd}
\end{cases}
\\&\qquad=\begin{cases}
\alpha^{\frac{n}{2}}(1-\alpha)^{\frac{n}{2}}\quad\text{if $n$ is even}\\
2 \alpha^\frac{n+1}{2}(1-\alpha)^\frac{n+1}{2}\quad\text{if $n$ is odd}
\end{cases}
\end{align*}
It remains to compute $p_{n-1,n}$.
\begin{align*}
p_{n-1,n}&=\sum_{k=0}^{n-1}\Pp\{a_k~\text{and}~a_{k+1}~\text{have the same signs}~\text{and there are $n-1$ changes of signs in}~ (a_0,\ldots,a_k,a_{k+1},\ldots, a_n)\}
\\&=:\sum_{k=0}^{n-1}\gamma_{k}.
\end{align*}
We now compute $\gamma_k$. This depends on the parity of $n$ and $k$. If both $n$ and $k$ are even, then
\begin{align*}
\gamma_k&=\Pp(a_0>0,a_1<0,\ldots, a_k>0, a_{k+1}>0,\ldots a_n<0)+\Pp(a_0<0,a_1>0,\ldots, a_k<0, a_{k+1}<0,\ldots a_n>0)
\\&=(1-\alpha)^\frac{n}{2}\alpha^\frac{n+2}{2}+(1-\alpha)^{\frac{n+2}{2}}\alpha^\frac{n}{2}.
\end{align*}
If $n$ is even and $k$ is odd, then
\begin{align*}
\gamma_k&=\Pp(a_0>0,a_1<0,\ldots, a_k<0, a_{k+1}<0,\ldots a_n<0)+\Pp(a_0<0,a_1>0,\ldots, a_k>0, a_{k+1}>0,\ldots a_n>0)
\\&=\alpha^\frac{n+2}{2}(1-\alpha)^\frac{n}{2}+(1-\alpha)^\frac{n+2}{2}\alpha^\frac{n}{2}.
\end{align*}
Therefore, in both cases, i.e., if $n$ is even we get
\[
\gamma_k=\alpha^\frac{n}{2}(1-\alpha)^\frac{n}{2}.
\]
From this we deduce $p_{n-1,n}= n \alpha^\frac{n}{2}(1-\alpha)^\frac{n}{2}$.
Similarly if $n$ is odd and $k$ is even
\begin{align*}
\gamma_k&=\Pp(a_0>0,a_1<0,\ldots, a_k>0, a_{k+1}>0,\ldots a_n>0)+\Pp(a_0<0,a_1>0,\ldots, a_k<0, a_{k+1}<0,\ldots a_n<0)
\\&=(1-\alpha)^{\frac{n+3}{2}}\alpha^\frac{n-1}{2}+(1-\alpha)^\frac{n-1}{2}\alpha^\frac{n+3}{2}.
\end{align*}
If both $n$ and $k$ are odd
\begin{align*}
\gamma_k&=\Pp(a_0>0,a_1<0,\ldots, a_k<0, a_{k+1}<0,\ldots a_n>0)+\Pp(a_0<0,a_1>0,\ldots, a_k>0, a_{k+1}>0,\ldots a_n<0)
\\&=\alpha^\frac{n+1}{2}(1-\alpha)^\frac{n+1}{2}+(1-\alpha)^\frac{n+1}{2}\alpha^\frac{n+1}{2}.
\end{align*}
Then when $n$ is odd, we obtain
\begin{align*}
p_{n-1,n}&=\frac{n+1}{2}\Big[(1-\alpha)^{\frac{n+3}{2}}\alpha^\frac{n-1}{2}+(1-\alpha)^\frac{n-1}{2}\alpha^\frac{n+3}{2}\Big]+(n-1)\alpha^\frac{n+1}{2}(1-\alpha)^\frac{n+1}{2}
\\&=\alpha^\frac{n+1}{2}(1-\alpha)^\frac{n+1}{2}\bigg[\frac{n+1}{2}\Big(\frac{\alpha}{1-\alpha}+\frac{1-\alpha}{\alpha}\Big)+(n-1)\bigg].
\end{align*}
In conclusion,
\begin{equation*}
p_{n-1,n}=\begin{cases}
 n \alpha^\frac{n}{2}(1-\alpha)^\frac{n}{2}\quad\text{if $n$ even}\\
 \alpha^\frac{n+1}{2}(1-\alpha)^\frac{n+1}{2}\bigg[\frac{n+1}{2}\Big(\frac{\alpha}{1-\alpha}+\frac{1-\alpha}{\alpha}\Big)+(n-1)\bigg]\quad\text{if $n$ odd}.
\end{cases}
\end{equation*}
\end{proof}
The computations of $p_{k,n}$ for others $k$ are more involved. We will employ combinatorial techniques and derive recursive formulas for $p_{k,n}$. We define
\begin{align*}
&u_{k,n}=\Pp(\text{there are $k$ variations of signs in}~\{a_0,\ldots,a_n\}\big\vert a_{n}>0) \quad \text{and}
\\& v_{k,n}=\Pp(\text{there are $k$ variations of signs in}~\{a_0,\ldots,a_n\}\big\vert a_{n}<0).
\end{align*}
We have the following lemma.
\begin{lemma} The following recursive relations hold
\label{lem: u and v}
\begin{equation}
u_{k,n}=\alpha u_{k,n-1}+(1-\alpha)v_{k-1,n-1}\label{Heq1}
\quad \text{and}\quad v_{k,n}=\alpha u_{k-1,n-1}+(1-\alpha)v_{k,n-1}.
\end{equation}
\end{lemma}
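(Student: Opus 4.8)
The plan is to derive the two recursions by conditioning on the sign of the last-but-one coefficient $a_{n-1}$, exploiting independence of the $a_i$'s. Concretely, consider the event $E_{k,n}$ that the sequence $(a_0,\ldots,a_n)$ has exactly $k$ sign changes. Since $a_n$ is independent of $(a_0,\ldots,a_{n-1})$, I would write $u_{k,n} = \Pp(E_{k,n}\mid a_n>0)$ and split according to whether $a_{n-1}>0$ or $a_{n-1}<0$. If $a_{n-1}>0$, then there is no sign change between $a_{n-1}$ and $a_n$, so $(a_0,\ldots,a_n)$ has $k$ sign changes iff $(a_0,\ldots,a_{n-1})$ has $k$ sign changes; conditioning further on $a_{n-1}>0$ this contributes $\alpha\,u_{k,n-1}$. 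If $a_{n-1}<0$, there is one sign change at the last step, so $(a_0,\ldots,a_n)$ has $k$ sign changes iff $(a_0,\ldots,a_{n-1})$ has $k-1$ sign changes, contributing $(1-\alpha)\,v_{k-1,n-1}$. Adding these gives \eqref{Heq1}, namely $u_{k,n}=\alpha u_{k,n-1}+(1-\alpha)v_{k-1,n-1}$.

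The second relation is entirely symmetric: for $v_{k,n}=\Pp(E_{k,n}\mid a_n<0)$, if $a_{n-1}>0$ there is a sign change at the last step, so we need $k-1$ changes in $(a_0,\ldots,a_{n-1})$ given $a_{n-1}>0$, contributing $\alpha\,u_{k-1,n-1}$; if $a_{n-1}<0$ there is no final sign change, so we need $k$ changes in $(a_0,\ldots,a_{n-1})$ given $a_{n-1}<0$, contributing $(1-\alpha)\,v_{k,n-1}$. Summing yields $v_{k,n}=\alpha u_{k-1,n-1}+(1-\alpha)v_{k,n-1}$.

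The key points to make rigorous are: (i) that $a_n$ being independent of $(a_0,\ldots,a_{n-1})$ lets us drop the conditioning on $a_n$ when we look only at the prefix — more precisely, $\Pp(\,\cdot\mid a_n>0,\,a_{n-1}>0)$ restricted to an event depending only on $(a_0,\ldots,a_{n-1})$ equals $\Pp(\,\cdot\mid a_{n-1}>0)$, and similarly with $a_{n-1}<0$; (ii) the total probability decomposition over the disjoint events $\{a_{n-1}>0\}$ and $\{a_{n-1}<0\}$, whose conditional probabilities given $a_n$ are just $\alpha$ and $1-\alpha$ respectively by independence; and (iii) the combinatorial observation that appending one more coefficient either preserves or increments the sign-change count by exactly one according to whether $a_{n-1}$ and $a_n$ agree in sign. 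None of these steps is deep; the only thing requiring a little care is bookkeeping the boundary conventions (e.g. $u_{-1,n}=v_{-1,n}=0$ and the base cases $u_{0,0}=v_{0,0}=1$), which I would state explicitly so the recursion is well-defined for all relevant $k$ and $n$.

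I do not anticipate a genuine obstacle here — this is a straightforward first-step analysis. The one subtlety worth flagging is that the $a_i$ are assumed to take value $0$ with probability zero (continuous or at least atomless at $0$), so that $\Pp(a_i>0)+\Pp(a_i<0)=1$ and "sign change" is unambiguously defined; this is implicit in the standing hypothesis $\Pp(a_i>0)=\alpha$, $\Pp(a_i<0)=1-\alpha$. With that in hand the proof is just the conditioning argument above, and $p_{k,n}$ itself is then recovered as $p_{k,n}=\alpha\,u_{k,n}+(1-\alpha)v_{k,n}$, which feeds into the subsequent estimates.
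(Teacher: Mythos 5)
Your argument is exactly the paper's: condition on the sign of $a_{n-1}$ via the law of total probability, use independence of $a_{n-1}$ and $a_n$ to replace the conditional probabilities $\Pp(a_{n-1}\gtrless 0\mid a_n>0)$ by $\alpha$ and $1-\alpha$, and note that the last step either preserves or increments the sign-change count. Your added remarks on boundary conventions and atomlessness at $0$ are sensible but not a departure from the paper's proof.
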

\begin{proof}
Applying the law of total probability $P(A|B)=P(A|B,C)P(C|B)+P(A|B,\bar{C})P(\bar{C}|B)$,
we have:
\begin{align*}
&\Pp(k~\text{sign switches in} \{a_0,\ldots,a_n\}\big\vert a_{n}>0)
\\&\quad=\Pp(k~\text{ sign switches in}~\{a_0,\ldots, a_n\}\big\vert a_{n}>0, a_{n-1}>0)\Pp(a_{n-1}>0|a_{n}>0)
\\ &\qquad\quad+\Pp(k~\text{sign switches in }\{a_0,\ldots,a_n\}\big\vert a_{n}>0,a_{n-1}<0)\Pp(a_{n-1}<0|a_{n}>0).
\end{align*}
Since $a_{n-1}$ and $a_{n}$ are independent, we have $\Pp(a_{n-1}>0\big\vert a_{n}>0)=\Pp(a_{n-1}>0)$ and $P(a_{n-1}<0\big\vert a_{n}>0)=P(a_{n-1}<0)$. Therefore,
\begin{align*}
&P(k~\text{ sign switches in}~\{a_0,\ldots,a_n\}\big\vert a_{n}>0)
\\&\qquad= \Pp(k~\text{sign switches in}~\{a_0,\ldots,a_n\}\big\vert a_{n}>0,a_{n-1}>0)\Pp(a_{n-1}>0)
\\ &\qquad\qquad +\Pp(k\text{ sign switches in}~\{a_0,\ldots,a_n\}\big\vert a_{n}>0,a_{n-1}<0)P(a_{n-1}<0)
\\&\qquad=\Pp(k~\text{ sign switches in}~\{a_0,\ldots,a_{n-1}\}\big\vert a_{n-1}>0)\Pp(a_{n-1}>0)
\\& \qquad\qquad +\Pp(k-1~\text{sign switches in}~\{a_0,\ldots,a_{n-1}\}\big\vert  a_{n-1}<0)\Pp(a_{n-1}<0).
\end{align*}
Therefore we obtain the first relationship in \eqref{Heq1}. The second one is proved similarly.
\end{proof}
We can decouple the recursive relations in Lemma~\ref{lem: u and v} to obtain recursive relations for $\{u_{k,n}\}$ and $v_{k,n}$ separately. 
\begin{lemma}
\label{lem: u and v 2}
The following recursive relations hold
\begin{equation}
u_{k,n} =\alpha(1-\alpha)(u_{k-2,n-2}-u_{k,n-2})+u_{k,n-1}\quad\text{and}\quad v_{k,n}  =\alpha(1-\alpha)(v_{k-2,n-2}-v_{k,n-2})+v_{k,n-1}
\end{equation}
\end{lemma}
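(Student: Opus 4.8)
The plan is to \emph{decouple} the coupled system of Lemma~\ref{lem: u and v} by eliminating the cross-family terms, reducing each of $\{u_{k,n}\}$ and $\{v_{k,n}\}$ to a recursion involving only its own quantities. The whole proof is a linear elimination, so I expect no genuine obstacle beyond keeping the index shifts straight and stating the boundary conventions $u_{k,n}=v_{k,n}=0$ for $k<0$ (and the degenerate small-$n$ values), which guarantee that the substitutions below stay valid at the edges $k\in\{0,1\}$.

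First I would isolate the $v$-term in the first relation of~\eqref{Heq1}, namely $(1-\alpha)v_{k-1,n-1}=u_{k,n}-\alpha u_{k,n-1}$, and then the same identity with $n$ replaced by $n-1$, namely $(1-\alpha)v_{k-1,n-2}=u_{k,n-1}-\alpha u_{k,n-2}$. Next I apply the \emph{second} relation of~\eqref{Heq1} with indices $(k-1,n-1)$, which gives $v_{k-1,n-1}=\alpha u_{k-2,n-2}+(1-\alpha)v_{k-1,n-2}$; multiplying through by $(1-\alpha)$ and substituting the two expressions just obtained yields
\[
u_{k,n}-\alpha u_{k,n-1}=\alpha(1-\alpha)\,u_{k-2,n-2}+(1-\alpha)\bigl(u_{k,n-1}-\alpha u_{k,n-2}\bigr).
\]
Collecting the $u_{k,n-1}$ terms and using $\alpha+(1-\alpha)=1$ gives precisely $u_{k,n}=u_{k,n-1}+\alpha(1-\alpha)(u_{k-2,n-2}-u_{k,n-2})$, the first claimed identity.

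The recursion for $v_{k,n}$ then follows by the symmetric computation: isolate $\alpha u_{k-1,n-1}=v_{k,n}-(1-\alpha)v_{k,n-1}$ from the second relation of~\eqref{Heq1}, expand $u_{k-1,n-1}$ via the first relation at $(k-1,n-1)$ to get $\alpha u_{k-1,n-1}=\alpha^2 u_{k-1,n-2}+\alpha(1-\alpha)v_{k-2,n-2}$, and then eliminate $\alpha u_{k-1,n-2}$ using the second relation of~\eqref{Heq1} with $n$ replaced by $n-1$, i.e.\ $\alpha u_{k-1,n-2}=v_{k,n-1}-(1-\alpha)v_{k,n-2}$. Substituting and simplifying exactly as above produces $v_{k,n}=v_{k,n-1}+\alpha(1-\alpha)(v_{k-2,n-2}-v_{k,n-2})$, which completes the argument.
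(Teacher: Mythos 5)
Your proof is correct and takes essentially the same route as the paper: both eliminate the cross-family terms from the coupled system in Lemma~\ref{lem: u and v} by solving one relation for the $v$-terms (resp.\ $u$-terms) and substituting into the other, yielding the decoupled second-order recursions. The only cosmetic difference is that the paper performs the elimination at shifted indices $(k+1,n+1)$ and re-indexes (and treats the $v$-case as "similarly"), while you work directly at $(k,n)$ and write out both cases.
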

\begin{proof}
From \eqref{Heq1}, it follows that
\begin{equation}
v_{k-1,n-1}=\frac{u_{k,n}-\alpha u_{k,n-1}}{1-\alpha},\quad v_{k,n-1}=\frac{u_{k+1,n}-\alpha u_{k+1,n-1}}{1-\alpha}
\label{Heq1.2}
\end{equation}
Replace \eqref{Heq1.2} to \eqref{Heq1} we have
\[
\frac{u_{k+1,n+1}-\alpha u_{k+1,n}}{1-\alpha}=\alpha u_{k-1,n-1}+(1-\alpha)\frac{u_{k+1,n}-\alpha u_{k+1,n-1}}{1-\alpha}
\]
which implies that
\begin{align*}
u_{k+1,n+1}& =(1-\alpha)\alpha u_{k-1,n-1}+(1-\alpha)(u_{k+1,n}-\alpha u_{k+1,n-1})+\alpha u_{k+1,n}
\\& =(1-\alpha)\alpha u_{k-1,n-1}-\alpha(1-\alpha)u_{k+1,n-1}+u_{k+1,n}.
\end{align*}
Re-indexing we get $ u_{k,n} =(1-\alpha)\alpha(u_{k-2,n-2}-u_{k,n-2})+u_{k,n-1}$. Similarly we obtain the recursive formula for $v_{k,n}$.
\end{proof}
Using the recursive equations for $u_{k,n}$ and $v_{k,n}$ we can also derive  a recursive relation for $p_{k,n}$.
\begin{proposition}
\label{prop: recursive p}
$\{p_{k,n}\}$ satisfies the following recursive relation.
\begin{equation}
\label{eq: recursive formula p}
p_{k,n}=\alpha(1-\alpha)(p_{k-2,n-2}-p_{k,n-2})+p_{k,n-1}.
\end{equation}
\end{proposition}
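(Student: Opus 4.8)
The plan is to express $p_{k,n}$ as a convex combination of the conditional probabilities $u_{k,n}$ and $v_{k,n}$ and then simply transfer the recursion already proved for those two quantities in Lemma~\ref{lem: u and v 2}. Concretely, by the law of total probability, conditioning on the sign of the top-index coefficient $a_n$ and using $\Pp(a_n>0)=\alpha$, $\Pp(a_n<0)=1-\alpha$,
\[
p_{k,n} = \alpha\,u_{k,n} + (1-\alpha)\,v_{k,n},
\]
and the same decomposition applied at degrees $n-1$ and $n-2$ gives $p_{k,n-1}=\alpha u_{k,n-1}+(1-\alpha)v_{k,n-1}$, $p_{k,n-2}=\alpha u_{k,n-2}+(1-\alpha)v_{k,n-2}$, and $p_{k-2,n-2}=\alpha u_{k-2,n-2}+(1-\alpha)v_{k-2,n-2}$.

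Next I would take the two recursions of Lemma~\ref{lem: u and v 2}, multiply the one for $u_{k,n}$ by $\alpha$ and the one for $v_{k,n}$ by $(1-\alpha)$, and add them. Both recursions have exactly the same shape $w_{k,n}=\alpha(1-\alpha)(w_{k-2,n-2}-w_{k,n-2})+w_{k,n-1}$, so the linear combination $\alpha u+(1-\alpha)v$ satisfies the identical relation; substituting the four displayed identities from the first step then rewrites this as the claimed formula~\eqref{eq: recursive formula p}.

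The only points needing a little care are bookkeeping ones: checking that the boundary conventions (namely $u_{k,n}=v_{k,n}=0=p_{k,n}$ when $k<0$ or $k>n$, together with the initial values at small $n$) are consistent across all three quantities so that the recursion holds on the intended range, and making sure the independence of $a_n$ from $a_0,\dots,a_{n-1}$ — which underpins both the total-probability splitting used here and Lemma~\ref{lem: u and v} — is invoked correctly. There is no genuine analytic obstacle: once the decomposition $p_{k,n}=\alpha u_{k,n}+(1-\alpha)v_{k,n}$ is in place, the proposition is a two-line consequence of Lemma~\ref{lem: u and v 2}, so the "hard part" is merely organising the index shifts cleanly.
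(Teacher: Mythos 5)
Your proposal is correct and is essentially the paper's own argument: the paper likewise writes $p_{k,n}=\alpha u_{k,n}+(1-\alpha)v_{k,n}$ and substitutes the recursions of Lemma~\ref{lem: u and v 2} for $u_{k,n}$ and $v_{k,n}$, regrouping the terms to obtain \eqref{eq: recursive formula p}. Your explicit remarks on the total-probability decomposition, the independence of the coefficients, and the boundary conventions are only minor elaborations of the same route.
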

\begin{proof} From Lemmas \ref{lem: u and v} and \ref{lem: u and v 2} we have
\begin{align*}
p_{k,n}&=\alpha u_{k,n}+(1-\alpha)v_{k,n}
\\&= \alpha[\alpha(1-\alpha)(u_{k-2,n-2}-u_{k,n-2})+u_{k,n-1}]
\\\qquad & +(1-\alpha)[\alpha(1-\alpha)(v_{k-2,n-2}-v_{k,n-2})+v_{k,n-1}]
\\&= \alpha(1-\alpha)[\alpha(u_{k-2,n-2}-u_{k,n-2})+(1-\alpha)(v_{k-2,n-2}-v_{k,n-2})]
\\ \qquad& +\alpha u_{k,n-1}+(1-\alpha)v_{k,n-1}
\\&= \alpha(1-\alpha)(p_{k-2,n-2}-p_{k,n-2})+p_{k,n-1}.
\end{align*}
This finishes the proof.
\end{proof}

In the next main theorem we will find explicit formulas for $p_{k,n}$ from the recursive formula in the previous lemma using the method of generating function. The case $\alpha=\frac{1}{2}$ will be a special one.
\begin{theorem}
\label{theo: explicit p}
$p_{k,n}$ is given explicitly by: for $\alpha=\frac{1}{2}$,
\[
p_{k,n}=\frac{1}{2^n}\begin{pmatrix}
n\\k
\end{pmatrix}
\]
and for $\alpha\neq \frac{1}{2}$,

If $k$ is even, $k=2k'$, then
\[
p_{k,n}=\begin{cases}
\sum_{m=\lceil\frac{n}{2}\rceil}^n \frac{n-k+1}{2m-n+1}\begin{pmatrix}
m\\
k',n-k'-m,2m-n
\end{pmatrix}(-1)^{n-k'-m}(\alpha(1-\alpha))^{n-m}~~\text{if $n$ even},\\ \\
\sum_{m=\lceil\frac{n}{2}\rceil}^n \frac{n-k+1}{2m-n+1}\begin{pmatrix}
m\\
k',n-k'-m,2m-n
\end{pmatrix}(-1)^{n-k'-m}(\alpha(1-\alpha))^{n-m}\\
\qquad+2\begin{pmatrix}
\lceil\frac{n-1}{2}\rceil \\
k'
\end{pmatrix}(-1)^{\lceil\frac{n-1}{2}\rceil-k'+1} (\alpha(1-\alpha))^{\frac{n+1}{2}}~~\text{if $n$ odd}.
\end{cases}
\]

If $k$ is odd, $k=2k'+1$, then
\[
p_{k,n}=2\,\sum_{m=\lceil\frac{n-1}{2}\rceil}^n\begin{pmatrix}
m\\
k', n-k'-m-1,2m-n+1
\end{pmatrix}(-1)^{n-k'-m-1} (\alpha(1-\alpha))^{n-m}.
\]
\end{theorem}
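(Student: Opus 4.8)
The plan is to encode the whole array $\{p_{k,n}\}$ in a bivariate generating function and extract its coefficients. Write $\beta:=\alpha(1-\alpha)$ and set $F(x,y):=\sum_{n\ge 0}\sum_{k=0}^{n}p_{k,n}x^ky^n$. The recursion of Proposition~\ref{prop: recursive p} is valid for $n\ge 2$, while the only nonzero low-order data are $p_{0,0}=1$, $p_{0,1}=\alpha^2+(1-\alpha)^2=1-2\beta$ and $p_{1,1}=2\alpha(1-\alpha)=2\beta$ by Proposition~\ref{prop: initial values}. Multiplying the recursion by $x^ky^n$, summing over $n\ge 2$ and $0\le k\le n$, and recognising each resulting sum as a shift of $F$ leads to a linear equation for $F$ whose solution is
\[
F(x,y)=\frac{1-2\beta y(1-x)}{1-y+\beta(1-x^2)y^2}.
\]
When $\alpha=\tfrac12$ we have $\beta=\tfrac14$, the numerator $1-\tfrac12y(1-x)$ divides the denominator with quotient $1-\tfrac12y(1+x)$, so $F=\bigl(1-\tfrac12y(1+x)\bigr)^{-1}=\sum_n\bigl(\tfrac12y(1+x)\bigr)^n$ and hence $p_{k,n}=2^{-n}\binom nk$.

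For $\alpha\ne\tfrac12$ I would write $F=(1-2\beta y+2\beta xy)\,G$ with $G:=\bigl(1-y+\beta(1-x^2)y^2\bigr)^{-1}$, so that, writing $g_{k,n}:=[x^ky^n]G$, one has $p_{k,n}=g_{k,n}-2\beta g_{k,n-1}+2\beta g_{k-1,n-1}$. Expanding $G=\sum_{j\ge 0}\bigl(y-\beta(1-x^2)y^2\bigr)^{j}$ and applying the binomial theorem twice, then substituting $m=n-i$ where $i$ is the exponent of $\beta$ and collapsing $\binom{m}{n-m}\binom{n-m}{k'}=\binom{m}{k',\,n-m-k',\,2m-n}$, gives $g_{k,n}=0$ for $k$ odd and
\[
g_{2k',n}=\sum_{m=\lceil n/2\rceil}^{\,n-k'}\binom{m}{k',\,n-m-k',\,2m-n}(-1)^{n-m+k'}\beta^{\,n-m}.
\]
Since $G$ is even in $x$, for odd $k=2k'+1$ the first two terms of $p_{k,n}$ drop out and $p_{2k'+1,n}=2\beta g_{2k',n-1}$, which after reindexing is exactly the stated formula (extending the sum to $m=n$ costs nothing because the multinomial then vanishes). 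For even $k=2k'$ the third term drops out and $p_{2k',n}=g_{2k',n}-2\beta g_{2k',n-1}$, so it remains to add these two sums term by term in $m$.

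The combinatorial core is the identity
\[
\binom{m}{k',\,n-m-k',\,2m-n}+2\binom{m}{k',\,n-m-k'-1,\,2m-n+1}=\frac{n-2k'+1}{2m-n+1}\,\binom{m}{k',\,n-m-k',\,2m-n},
\]
immediate from $\binom{m}{a,b-1,c+1}=\tfrac{b}{c+1}\binom{m}{a,b,c}$ together with $(2m-n+1)+2(n-m-k')=n-2k'+1$; the signs agree because $(-1)^{n-m+k'}=(-1)^{n-m-k'}$. I expect the real obstacle to be not this identity but the bookkeeping of summation ranges, so I would treat $n$ even and $n$ odd separately. For $n$ even the ranges of $m$ in $g_{2k',n}$ and in $\beta g_{2k',n-1}$ dovetail so that every $m$ is paired (apply the identity) except the endpoint $m=n-k'$, where $\tfrac{n-2k'+1}{2m-n+1}=1$ and the "paired" expression already reproduces the lone surviving term, so no correction is needed. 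For $n$ odd the same happens at $m=n-k'$, but now $\beta g_{2k',n-1}$ contributes one unpaired term at $m=\tfrac{n-1}{2}$, where $2m-n+1=0$, the multinomial degenerates to $\binom{(n-1)/2}{k'}$ and the $\beta$-power is $\tfrac{n+1}{2}$; this term is precisely the extra summand in the statement. Assembling these cases gives the claimed formulas, and as a consistency check one may re-substitute them into the recursion of Proposition~\ref{prop: recursive p}.
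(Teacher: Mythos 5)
Your proposal is correct and follows essentially the same route as the paper: a bivariate generating function built from the recursion of Proposition~\ref{prop: recursive p}, a closed rational form for it (yours, in the variable $\beta=\alpha(1-\alpha)$, is exactly the paper's $F$ after the rescaling $a_{k,n}=A^np_{k,n}$, $A^2=1/\beta$), multinomial expansion of the denominator, and the same multinomial-coefficient identity producing the factor $\frac{n-k+1}{2m-n+1}$ together with the unpaired $m=\frac{n-1}{2}$ term for $n$ odd. The only substantive deviation is that you seed the generating function with just the $n\le 1$ data and apply the recursion also at $k=0,1$ and at the boundary $k\in\{n-1,n\}$ with out-of-range terms set to zero --- this does hold (a short check against Proposition~\ref{prop: initial values} confirms it) but should be stated and verified, whereas the paper sidesteps it by feeding in the full $k=0$ and $k=1$ columns and using the recursion only for $k\ge 2$.
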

\begin{proof}

Set $1/A^2:=\alpha(1-\alpha)$. By Cauchy-Schartz inequality $\alpha(1-\alpha)\leq \frac{(\alpha+1-\alpha)^2}{4}=\frac{1}{4}$, it follows that $A^2\geq 4$. Define $a_{k,n}:=A^n p_{k,n}$. Substituting this relation into \eqref{eq: recursive formula p} we get the following recursive formula for $a_{k,n}$
\[
a_{k,n}=a_{k-2,n-2}-a_{k,n-2}+A a_{k,n-1}.
\]
According to Proposition \ref{prop: initial values}
\begin{align}
a_{0,n}&=A^n p_{0,n}=A^n\Big(\alpha^{n+1}+(1-\alpha)^{n+1}\Big)=\alpha\Big(\frac{\alpha}{1-\alpha}\Big)^\frac{n}{2}+(1-\alpha)\Big(\frac{1-\alpha}{\alpha}\Big)^\frac{n}{2},\label{eq: a0n}
\\ a_{1,n}&=A^n p_{1,n}=\begin{cases}
n\quad\text{if}~~\alpha=\frac{1}{2}\\
\frac{2\alpha(1-\alpha)}{1-2\alpha}\Big[\big(\frac{1-\alpha}{\alpha}\big)^\frac{n}{2}-\big(\frac{\alpha}{1-\alpha}\big)^\frac{n}{2}\Big].
\end{cases}\label{eq: a1n}
\end{align}
Also $a_{k,n}=0$ for $k>n$. Let $F(x,y)$ be the generating function of $a_{k,n}$, that is
\[
F(x,y):=\sum_{k=0}^\infty\sum_{n=0}^\infty a_{k,n}x^k y^n.
\]
Define
\[
g(x,y)=\sum_{n=0}^\infty a_{0,n} y^n+\sum_{n=0}^\infty a_{1,n} x y^n.
\]
From \eqref{eq: a0n}-\eqref{eq: a1n} we have: for $\alpha=\frac{1}{2}$
\begin{equation*}
g(x,y)=\sum_{n=0}^\infty y^n+xy\sum_{n=0}^\infty ny^{n-1}=\frac{1}{1-y}+xy\frac{d}{dy}\Big(\frac{1}{1-y}\Big)=\frac{1-y+xy}{(1-y)^2}, 
\end{equation*}
and for $\alpha\neq \frac{1}{2}$
\begin{align*}
g(x,y)&=\sum_{n=0}^\infty\bigg[\alpha\Big(\frac{\alpha}{1-\alpha}\Big)^\frac{n}{2}+(1-\alpha)\Big(\frac{1-\alpha}{\alpha}\Big)^\frac{n}{2}\bigg] y^n+\frac{2\alpha(1-\alpha)x}{1-2\alpha} \sum_{n=1}^\infty\bigg[\Big(\frac{1-\alpha}{\alpha}\Big)^\frac{n}{2}-\Big(\frac{\alpha}{1-\alpha}\Big)^\frac{n}{2}\bigg]\, y^n
\\&=\Big[\alpha-\frac{2\alpha(1-\alpha)x}{1-2\alpha}\Big]\sum_{n=0}^\infty \Big(\frac{\alpha}{1-\alpha}\Big)^\frac{n}{2}\,y^n+\Big[1-\alpha+\frac{2\alpha(1-\alpha)x}{1-2\alpha}\Big]\sum_{n=0}^\infty \Big(\frac{1-\alpha}{\alpha}\Big)^\frac{n}{2}\, y^n
\\&=\Big[\alpha-\frac{2\alpha(1-\alpha)x}{1-2\alpha}\Big]\sum_{n=0}^\infty (\alpha A)^n y^n+\Big[1-\alpha+\frac{2\alpha(1-\alpha)x}{1-2\alpha}\Big]\sum_{n=0}^\infty ((1-\alpha)A)^n y^n
\\&=\Big[\alpha-\frac{2\alpha(1-\alpha)x}{1-2\alpha}\Big]\frac{1}{1-\alpha A y}+\Big[1-\alpha+\frac{2\alpha(1-\alpha)x}{1-2\alpha}\Big]\frac{1}{1-(1-\alpha)A y}
\\&=\frac{\Big(\alpha(1-2\alpha)-2\alpha(1-\alpha)x\Big)\Big(1-(1-\alpha)Ay\Big)+\Big((1-\alpha)(1-2\alpha)+2\alpha(1-\alpha)x\Big)\Big(1-\alpha Ay\Big)}{(1-2\alpha)(1-\alpha y)(1-(1-\alpha)Ay)}
\\&=\frac{1-\frac{2y}{A}+\frac{2xy}{A}}{1-Ay+y^2}.
\end{align*}
Note that in the above computations we have the following identities
\[
\frac{1}{A^2}=\alpha(1-\alpha),\quad \frac{\alpha}{1-\alpha}=(\alpha A)^2,\quad \frac{1-\alpha}{\alpha}=(1-\alpha)^2A^2,\quad (1-\alpha Ay)(1-(1-\alpha)Ay)=1-Ay+y^2.
\]
Now we have
\begin{align}
F(x,y)&=\sum_{k=0}^\infty\sum_{n=0}^\infty a_{k,n}x^k y^n\nonumber
\\&=g(x,y)+\sum_{k=2}^\infty\sum_{n=2}^\infty (a_{k-2,n-2}-a_{k,n-2}+A a_{k,n-1})x^k y^n\nonumber
\\&=g(x,y)+\sum_{k=2}^\infty\sum_{n=2}^\infty a_{k-2,n-2} x^k y^n-\sum_{k=2}^\infty\sum_{n=2}^\infty a_{k,n-2} x^k y^n+ A \sum_{k=2}^\infty\sum_{n=2}^\infty a_{k,n-1}x^k y^n
\\&=g(x,y)+(I)+(II)+(III).\label{eq: F1}
\end{align}
We rewrite the sums (I), (II) and (III) as follow. For the first sum
\begin{align*}
(I)=\sum_{k=2}^\infty\sum_{n=2}^\infty a_{k-2,n-2} x^k y^n=x^2y^2\sum_{k=0}^\infty\sum_{n=0}^\infty a_{k,n} x^k y^n=x^2y^2 F(x,y).
\end{align*}
For the second sum
\begin{align*}
(II)=\sum_{k=2}^\infty\sum_{n=2}^\infty a_{k,n-2} x^k y^n&=\sum_{k=0}^\infty\sum_{n=2}^\infty a_{k,n-2} x^k y^n-\sum_{n=2}^\infty a_{0,n-2}y^n-\sum_{n=2}^\infty a_{1,n-2} x y^n
\\&=y^2 \sum_{k=0}^\infty\sum_{n=0}^\infty a_{k,n} x^k y^n-y^2\sum_{n=0}^\infty a_{0,n}y^n-y^2\sum_{n=1}^\infty a_{1,n} xy^n
\\&=y^2 (F(x,y)-g(x,y)).
\end{align*}
And finally for the last sum
\begin{align*}
(III)=\sum_{k=2}^\infty\sum_{n=2}^\infty a_{k,n-1} x^k y^n&=y(F(x,y)-g(x,y)).
\end{align*}
Substituting these sums back into \eqref{eq: F1} we get
\[
F(x,y)=g(x,y)+x^2y^2 F(x,y)-y^2 (F(x,y)-g(x,y))+A y(F(x,y)-g(x,y)),
\] 
which implies that
\[
F(x,y)=\frac{g(x,y)(1-Ay+y^2)}{(1-Ay+y^2-x^2y^2)}.
\]

For $\alpha=\frac{1}{2}$, we get
\begin{align*}
F(x,y)&=\frac{1-y+xy}{(1-y)^2}\frac{(1-y)^2}{(1-y)^2-x^2y^2}=\frac{1}{1-y-xy}
\\&=\sum_{n=0}^\infty (1+x)^n y^n
\\&=\sum_{n=0}^\infty \sum_{k=0}^n \begin{pmatrix}
n\\k
\end{pmatrix} x^k y^n,
\end{align*}
which implies that $\alpha_{k,n}=\begin{pmatrix}
n\\k
\end{pmatrix}$. Hence for the case $\alpha=\frac{1}{2}$, we obtain $p_{k,n}=\frac{1}{2^n}\begin{pmatrix}
n\\k
\end{pmatrix}$. 

For the case $\alpha\neq \frac{1}{2}$ we obtain
\begin{align*}
F(x,y)=\frac{1-\frac{2y}{A}+\frac{2xy}{A}}{1-Ay+y^2}\frac{1-Ay+y^2}{1-Ay+y^2-x^2y^2}=\frac{1-\frac{2y}{A}+\frac{2xy}{A}}{1-Ay+y^2-x^2y^2}.
\end{align*}
Finding the series expansion for this case is much more involved than the previous one. Using the multinomial theorem we have
\begin{align*}
\frac{1}{1-Ay+y^2-x^2y^2}&=\sum_{m=0}^\infty (x^2y^2-y^2+Ay)^m
\\&=\sum_{m=0}^\infty~\sum_{\substack{0\leq i,j,l\leq m\\i+j+l=m}}\begin{pmatrix}
m\\i,j,l
\end{pmatrix}(x^2y^2)^i(-y^2)^j(Ay)^l
\\&=\sum_{m=0}^\infty~\sum_{\substack{0\leq i,j,l\leq m\\i+j+l=m}}\begin{pmatrix}
m\\i,j,l
\end{pmatrix}(-1)^jA^l x^{2i}y^{2i+2j+l}
\\&=\sum_{m=0}^\infty~\sum_{\substack{0\leq i,l\leq m\\i+l\leq m}}\begin{pmatrix}
m\\i,m-i-l,l
\end{pmatrix}(-1)^{m-i-l}A^l x^{2i}y^{2m-l}.
\end{align*}
Therefore
\begin{align}
\label{eq: F2}
F(x,y)&=\frac{1}{A}(A-2y+2xy)\sum_{m=0}^\infty~\sum_{\substack{0\leq i,l\leq m\\i+l\leq m}}\begin{pmatrix}
m\\i,m-i-l,l
\end{pmatrix}(-1)^{m-i-l}A^l x^{2i}y^{2m-l}\nonumber
\\&=\sum_{m=0}^\infty~\sum_{\substack{0\leq i,l\leq m\\i+l\leq m}}\begin{pmatrix}
m\\i,m-i-l,l
\end{pmatrix}(-1)^{m-i-l}A^{l-1} \Big( A x^{2i}y^{2m-l}-2x^{2i}y^{2m-l+1}+2x^{2i+1}y^{2m-l+1}\Big).
\end{align}
From this we deduce that:

If $k$ is even, $k=2k'$, then to obtain the coefficient of $x^ky^n$ on the right-hand side of \eqref{eq: F2}, we select $(i,m,l)$ such that 
\[
(i=k'~\&~ 2m-l=n ~ \&~0\leq  i, l\leq m)\quad\text{or}\quad (i=k'~\&~ 2m-l+1=n~ \&~0\leq  i, l\leq m) 
\]
Then we obtain
\begin{align*}
a_{k,n}&=\sum_{m=\lceil\frac{n}{2}\rceil}^n\begin{pmatrix}
m\\
k',m-k'-(2m-n),2m-n
\end{pmatrix}(-1)^{m-k'-(2m-n)}A^{2m-n}
\\&\qquad+ 2\,\sum_{m=\lceil\frac{n-1}{2}\rceil}^n\begin{pmatrix}
m\\
k',m-k'-(2m-n+1),2m-n+1
\end{pmatrix}(-1)^{m-k'-(2m-n+1)+1}  A^{2m-n}
\\&=\sum_{m=\lceil\frac{n}{2}\rceil}^n\begin{pmatrix}
m\\
k',n-k'-m,2m-n
\end{pmatrix}(-1)^{n-k'-m}A^{2m-n}
\\&\qquad+ 2\,\sum_{m=\lceil\frac{n-1}{2}\rceil}^n\begin{pmatrix}
m\\
k', n-k'-m-1,2m-n+1
\end{pmatrix}(-1)^{n-k'-m} A^{2m-n}
\\&=\begin{cases}
\sum_{m=\lceil\frac{n}{2}\rceil}^n\Bigg[\begin{pmatrix}
m\\
k',n-k'-m,2m-n
\end{pmatrix}+2\begin{pmatrix}
m\\
k', n-k'-m-1,2m-n+1
\end{pmatrix}\Bigg](-1)^{n-k'-m}A^{2m-n}~~\text{if $n$ even}\\ \\
\sum_{m=\lceil\frac{n}{2}\rceil}^n\Bigg[\begin{pmatrix}
m\\
k',n-k'-m,2m-n
\end{pmatrix}+2\begin{pmatrix}
m\\
k', n-k'-m-1,2m-n+1
\end{pmatrix}\Bigg](-1)^{n-k'-m}A^{2m-n}\\
\qquad+2\begin{pmatrix}
\lceil\frac{n-1}{2}\rceil \\
k'
\end{pmatrix}(-1)^{\lceil\frac{n-1}{2}\rceil-k'+1} A^{-1}~~\text{if $n$ odd}
\end{cases}
\\&=\begin{cases}
\sum_{m=\lceil\frac{n}{2}\rceil}^n \frac{n-k+1}{2m-n+1}\begin{pmatrix}
m\\
k',n-k'-m,2m-n
\end{pmatrix}(-1)^{n-k'-m}A^{2m-n}~~\text{if $n$ even}\\ \\
\sum_{m=\lceil\frac{n}{2}\rceil}^n \frac{n-k+1}{2m-n+1}\begin{pmatrix}
m\\
k',n-k'-m,2m-n
\end{pmatrix}(-1)^{n-k'-m}A^{2m-n}\\
\qquad+2\begin{pmatrix}
\lceil\frac{n-1}{2}\rceil \\
k'
\end{pmatrix}(-1)^{\lceil\frac{n-1}{2}\rceil-k'+1} A^{-1}~~\text{if $n$ odd}
\end{cases}
\end{align*}
Similarly, if $k$ is odd, $k=2k'+1$, then to obtain the coefficient of $x^ky^n$ on the right-hand side of \eqref{eq: F2}, we select $(i,m,l)$ such that 
\[
(i=k'~\&~ 2m-l+1=n~ \&~0\leq  i, l\leq m) 
\]
and obtain
\begin{align*}
a_{k,n}=2\,\sum_{m=\lceil\frac{n-1}{2}\rceil}^n\begin{pmatrix}
m\\
k', n-k'-m-1,2m-n+1
\end{pmatrix}(-1)^{n-k'-m-1} A^{2m-n}.
\end{align*}
From $a_{k,n}$ we compute $p_{k,n}$ using the relations $p_{k,n}=\frac{a_{k,n}}{A^n}$ and $A^2=\frac{1}{\alpha(1-\alpha)}$ and obtain the claimed formulas. This finishes the proof of this theorem. 
\end{proof}
\begin{remark}
We can find $a_{k,n}$ by establishing a recursive relation. We have
\begin{align*}
\frac{1}{F(x,y)}&=\frac{1-Ay+y^2-x^2y^2}{1-\frac{2y}{A}+\frac{2xy}{A}}
\\&=-\frac{Axy}{2}-\frac{Ay}{2}+\frac{A^2}{4}+\frac{1-A^2/4}{1-\frac{2y}{A}+\frac{2xy}{A}}
\\&=-\frac{Axy}{2}-\frac{Ay}{2}+\frac{A^2}{4}+(1-A^2/4)\sum_{n=0}^\infty\Big(\frac{2y}{A}(1-x)\Big)^n
\\&=-\frac{Axy}{2}-\frac{Ay}{2}+\frac{A^2}{4}+(1-A^2/4)\sum_{n=0}^\infty \Big(\frac{2}{A}\Big)^n (1-x)^ny^n
\\&=-\frac{Axy}{2}-\frac{Ay}{2}+\frac{A^2}{4}+(1-A^2/4)\sum_{n=0}^\infty\sum_{k=0}^n (-1)^k C_{k,n}\Big(\frac{2}{A}\Big)^n x^ky^n
\\&=1+\big(\frac{2}{A}-A\big)y-\frac{2}{A}xy+(1-A^2/4)\sum_{n=2}^\infty\sum_{k=0}^n (-1)^k C_{k,n}\Big(\frac{2}{A}\Big)^n x^ky^n
\\&=:\sum_{n=0}^\infty\sum_{k=0}^n b_{k,n}x^k y^n:=B(x,y).
\end{align*}
where
\begin{equation}
b_{0,0}=1, \quad b_{0,1}=\frac{2}{A}-A, \quad b_{1,1}=-\frac{2}{A}\quad \text{and}\quad b_{k,n}=(1-A^2/4)(-1)^k C_{k,n}\Big(\frac{2}{A}\Big)^n ~~\text{for}~~0\leq k\leq n, n\geq 2.
\end{equation}
Using the relation that $F(x,y)B(x,y)=\Big(\sum_{n=0}^\infty \sum_{k=0}^\infty a_{k,n} x^k y^n\Big)\Big(\sum_{n'=0}^\infty \sum_{k'=0}^\infty b_{k'n'} x^{k'} y^{n'}\Big)=1$ we get the following recursive formula to determine $a_{K,N}$
\begin{align*}
a_{0,0}=\frac{1}{b_{0,0}}=1, \quad
a_{0,N}=-\sum_{n=0}^{N-1}a_{0,n}b_{0,N-n}
, \quad a_{K,N}=-\sum_{k=0}^{K-1}\sum_{n=0}^{N-1} a_{k,n}b_{K-k,N-n}.
\end{align*}
It is not trivial to obtain an explicit formula from this recursive formula. However, it is easily implemented using a computational software such as Mathematica or Mathlab.
\end{remark}
\begin{remark} Proposition \ref{prop: recursive p} provides a second-order recursive relation for the probabilities $p_{n,k}$. This relation resembles the well-known Chu-Vandermonde identity for binomial coefficient, $b_{k,n}:=\begin{pmatrix}
n\\k
\end{pmatrix}$, which is for $0<m<n$
\[
b_{k,n}=\sum\limits_{j=0}^k \begin{pmatrix}
m\\j
\end{pmatrix}b_{k-j,n-m}.
\] 
Particularly for $m=2$ we obtain
\begin{align*}
b_{k,n}&=b_{k,n-2}+2b_{k-1,n-2}+b_{k-2,n-2}
\\&=b_{k-2,n-2}-b_{k,n-2}+2(b_{k,n-2}+b_{k-1,n-2})
\\&=b_{k-2,n-2}-b_{k,n-2}+2b_{k,n-1},
\end{align*}
where the last identity is the Pascal' rule for binomial coefficients.

On the other hand, the recursive formula $p_{n,k}$ for $\alpha=\frac{1}{2}$ becomes
\[
p_{k,n}=\frac{1}{4}(p_{k-2,n-2}-p_{k,n-2})+p_{k,n-1}.
\]
Using the transformation $a_{k,n}:=\frac{1}{2^n}p_{k,n}$ as in the proof of Theorem \ref{theo: explicit p}, then 
\[
a_{k,n}=a_{k-2,n-2}-a_{k,n-2}+2a_{k,n-1},
\]
which is exactly the Chu-Vandermonde identity for $m=2$ above. Then it is no surprising that in Theorem \ref{theo: explicit p} we obtain that $a_{k,n}$ is exactly the same as the binomial coefficient $a_{k,n}=\begin{pmatrix}
n\\k
\end{pmatrix}$.
\end{remark}

\begin{example}
\begin{align*}
&\bullet\quad n=1:\quad p_{0,1}=\alpha^2+(1-\alpha)^2; \quad p_{1,1}=2\alpha(1-\alpha);
\\&\bullet\quad n=2:\quad p_{0,2}=\alpha^3+(1-\alpha)^3, \quad p_{1,2}=2\alpha(1-\alpha),\quad p_{2,2}=\alpha(1-\alpha);
\\&\bullet\quad n=3:\quad p_{0,3}=\alpha^4+(1-\alpha)^4,~~p_{1,3}=2\alpha(1-\alpha)(\alpha^2-\alpha+1),~~p_{2,3}=2\alpha(1-\alpha)(\alpha^2-\alpha+1),~~p_{3,3}=2\alpha^2(1-\alpha)^2;
\\&\bullet\quad n=4:~~p_{0,4}=\alpha^5+(1-\alpha)^5,~~p_{1,4}=2\alpha(1-\alpha)(2\alpha^2-2\alpha+1),~~p_{2,4}=3\alpha(1-\alpha)(2\alpha^2-2\alpha+1),
\\&\qquad\qquad\qquad p_{3,4}=4\alpha^2(1-\alpha)^2,~~p_{4,4}=\alpha^2(1-\alpha)^2.
\end{align*}
Direct computations verify the recursive formula for $k=2,n=4$
\[
p_{2,4}=\alpha(1-\alpha)(p_{0,2}-p_{2,2})+p_{2,3}
\]
\end{example}
We now apply Theorem~\ref{theo: explicit p} to the polynomial $P$ to obtain estimates for $p_m, 0\leq m\leq d-1$ which is the probability that a $d$-player two-strategy random evolutionary game has $m$ internal equilibria. This theorem extends Theorem~\ref{theo: estimates} for $\alpha=1/2$ to the general case although we do not get an explicit upper bound in terms of $d$ as in Theorem~\ref{theo: estimates}.
\begin{theorem}
The following assertions hold
\begin{enumerate}[(i)]
\item Upper-bound for $p_m$
\[
p_m\leq \sum_{\substack{k\geq m\\ k-m~\text{even}}} p_{k,d-1},
\]
where $p_{k,d-1}$ can be computed explicitly according to Theorem \ref{theo: explicit p} with $n$ replaced by $d-1$.
\item Lower-bound for $p_0$: $p_0\geq \alpha^{d}+(1-\alpha)^{d}\geq \frac{1}{2^{d-1}}$,
\item Lower-bound for $p_1$: $p_1\geq \begin{cases}
\frac{d-1}{2^{d-1}}\qquad\text{if}~\alpha=\frac{1}{2},\\
2\alpha(1-\alpha)\frac{(1-\alpha)^{d-1}-\alpha^{d-1}}{1-2\alpha}\qquad\text{if}~\alpha\neq\frac{1}{2},
\end{cases}$
\item Upper-bound for $p_{d-2}$: 
\begin{align*}
 p_{d-2}&\leq\begin{cases}
(d-1) \alpha^\frac{d-1}{2}(1-\alpha)^\frac{d-1}{2}\quad\text{if $d$ odd}\\
 \alpha^\frac{d}{2}(1-\alpha)^\frac{d}{2}\bigg[\frac{d}{2}\Big(\frac{\alpha}{1-\alpha}+\frac{1-\alpha}{\alpha}\Big)+(d-2)\bigg]\quad\text{if $d$ even}\end{cases}\nonumber
\\& \leq \frac{d-1}{2^{d-1}}\quad\text{when}~~d\geq 3.\label{eq: pd-2}
\end{align*}
\item Upper-bound for $p_{d-1}$: 
\begin{equation*}
q_{d-1}\leq\begin{cases}
\alpha^{\frac{d-1}{2}}(1-\alpha)^{\frac{d-1}{2}}\quad\text{if $d$ is odd}\\
2 \alpha^\frac{d}{2}(1-\alpha)^\frac{d}{2}\quad\text{if $d$ is even}
\end{cases}\leq \frac{1}{2^{d-1}}.\label{eq: pd-1}
\end{equation*}
\end{enumerate}
As consequences
\begin{enumerate}[(a)]
\item For $d=2$: $p_0=\alpha^2+(1-\alpha)^2$ and $p_1=2\alpha(1-\alpha)$.
\item For $d=3$, $p_1=2\alpha(1-\alpha)$.
\end{enumerate}
\end{theorem}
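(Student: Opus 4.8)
The plan is to transfer, via Descartes' rule of signs, all the information about sign patterns of coefficients established in Proposition~\ref{prop: initial values} and Theorem~\ref{theo: explicit p} (for the abstract polynomial \eqref{eq: poly}) to the polynomial $P$ of \eqref{eq: P1}. First I would observe that the $k$-th coefficient $\binom{d-1}{k}\beta_k$ of $P$ has the same sign as $\beta_k$ (as $\binom{d-1}{k}>0$), and that reversing a finite sequence does not change its number of sign variations; hence the number $v$ of sign changes in the coefficient sequence of $P$ equals the number of sign changes in $(\beta_0,\ldots,\beta_{d-1})$. Since $\{\beta_k\}$ are i.i.d.\ with $\Pp(\beta_k>0)=\alpha$ (and $\Pp(\beta_k=0)=0$), Theorem~\ref{theo: explicit p} with $n=d-1$ gives $\Pp(v=k)=p_{k,d-1}$, and Proposition~\ref{prop: initial values} provides the closed forms for $k\in\{0,1,d-2,d-1\}$; because $\beta_{d-1}\neq0$ almost surely, $\deg P=d-1$ and Descartes' rule is applicable.

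For Part~(i), let $\kappa$ be the number of internal equilibria, i.e.\ of positive roots of $P$. Descartes' rule gives that $v-\kappa$ is a nonnegative even integer, so $\{\kappa=m\}\subseteq\{\,v\geq m,\ v-m\ \text{even}\,\}$; passing to probabilities and summing over the admissible values of $v$ yields $p_m\leq\sum_{k\geq m,\ k-m\ \text{even}}p_{k,d-1}$, each summand being explicit by Theorem~\ref{theo: explicit p}. For the lower bounds in Parts~(ii) and~(iii) I would use the reverse inclusions available at the extremes: $v=0$ forces $\kappa=0$, whence $p_0\geq p_{0,d-1}=\alpha^{d}+(1-\alpha)^{d}$, and the remaining inequality $\alpha^{d}+(1-\alpha)^{d}\geq 2^{1-d}$ is the convexity of $t\mapsto t^{d}$ at $t=\alpha,1-\alpha$; similarly $v=1$ forces $\kappa=1$, so $p_1\geq p_{1,d-1}$, the stated formula with $n=d-1$.

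For Parts~(iv) and~(v), the key point is that since $k\leq d-1$ the sum in~(i) reduces to a single term when $m\in\{d-2,d-1\}$: $p_{d-1}\leq p_{d-1,d-1}$ and $p_{d-2}\leq p_{d-2,d-1}$. Substituting the formulas for $p_{n,n}$ and $p_{n-1,n}$ from Proposition~\ref{prop: initial values} (with $n=d-1$) gives the first inequalities of~(iv)--(v). To obtain the clean bounds $p_{d-1}\leq 2^{1-d}$ and $p_{d-2}\leq(d-1)2^{1-d}$, I would set $t:=\alpha(1-\alpha)\in(0,\tfrac14]$: when $d$ is odd both bounds follow at once from $t\leq\tfrac14$, and when $d$ is even the identity $\tfrac{\alpha}{1-\alpha}+\tfrac{1-\alpha}{\alpha}=\tfrac{1-2t}{t}$ collapses $p_{d-2,d-1}$ to $t^{d/2-1}\big(\tfrac d2-2t\big)$, which one checks is increasing on $(0,\tfrac14]$ (its derivative is $t^{d/2-2}\big(\tfrac d2(\tfrac d2-1)-dt\big)\geq0$ for $d\geq4$, $t\leq\tfrac14$) and hence maximal at $t=\tfrac14$, where it equals $(d-1)2^{1-d}$; the bound for $p_{d-1}$ in the even case is immediate from $t\leq\tfrac14$. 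This monotonicity check is the only step that is not pure bookkeeping with Descartes' rule and the formulas of Proposition~\ref{prop: initial values} and Theorem~\ref{theo: explicit p}.

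Finally, for the consequences I would intersect the upper and lower bounds. For $d=2$ (so $n=1$), Part~(i) gives $p_0\leq p_{0,1}$ and $p_1\leq p_{1,1}$, which are matched by Parts~(ii)--(iii), yielding $p_0=\alpha^2+(1-\alpha)^2$ and $p_1=2\alpha(1-\alpha)$; for $d=3$ (so $n=2$), Part~(i) gives $p_1\leq p_{1,2}$, matched by Part~(iii)'s $p_1\geq p_{1,2}=2\alpha(1-\alpha)$, so $p_1=2\alpha(1-\alpha)$. Specialising $\alpha=\tfrac12$ throughout recovers the corresponding assertions of Theorem~\ref{theo: estimates}.
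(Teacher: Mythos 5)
Your proposal is correct and follows essentially the same route as the paper: Descartes' rule of signs combined with Proposition~\ref{prop: initial values} and Theorem~\ref{theo: explicit p} applied with $n=d-1$, the extreme cases $v=0,1$ giving the lower bounds, the single-term reductions at $m=d-2,d-1$ giving the upper bounds, and the same optimisation of $\beta=\alpha(1-\alpha)\in(0,\tfrac14]$ (your $t^{d/2-1}(\tfrac d2-2t)$ is the paper's $f(\beta)=\tfrac d2\beta^{d/2-1}-2\beta^{d/2}$, with the same derivative computation) to get the clean bounds $\tfrac{d-1}{2^{d-1}}$ and $\tfrac1{2^{d-1}}$, then intersecting upper and lower bounds for $d=2,3$. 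Your explicit remark that the coefficients $\binom{d-1}{k}\beta_k$ of $P$ have the same signs as $\beta_k$ is a point the paper leaves implicit, but it does not change the argument.
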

\begin{proof}
We will apply Decartes' rule of signs, Proposition \ref{prop: initial values} and \ref{theo: explicit p} for the random polynomial \eqref{eq: eqn for y}. 
It follows from Decartes' rule of signs that
\[
p_m\leq\sum_{\substack{k\geq m\\ k-m~\text{even}}} p_{k,d-1},
\]
where $p_{k,d-1}$ is given explicitly in Theorem \ref{theo: explicit p} with $n$ replaced by $d-1$. This proves the first statement. In addition, we can also deduce from Decartes' rule of signs and Proposition \ref{prop: initial values} the following estimates for special cases $m\in\{0,1,d-2,d-1\}$:
\begin{align*}
\\&\bullet~~p_0\geq p_{0,d-1}=\alpha^d+(1-\alpha)^d\geq \min_{0\leq\alpha\leq 1}[\alpha^d+(1-\alpha)^d]=\frac{1}{2^{d-1}},
\\&\bullet~~ p_1\geq p_{1,d-1}=\begin{cases}
\frac{d-1}{2^{d-1}}\qquad\text{if}~\alpha=\frac{1}{2},\\
2\alpha(1-\alpha)\frac{(1-\alpha)^{d-1}-\alpha^{d-1}}{1-2\alpha}\qquad\text{if}~\alpha\neq\frac{1}{2},
\end{cases}
\\&\bullet~~  p_{d-2}\leq p_{d-2,d-1}=\begin{cases}
(d-1) \alpha^\frac{d-1}{2}(1-\alpha)^\frac{d-1}{2}\quad\text{if $d$ odd}\\
 \alpha^\frac{d}{2}(1-\alpha)^\frac{d}{2}\bigg[\frac{d}{2}\Big(\frac{\alpha}{1-\alpha}+\frac{1-\alpha}{\alpha}\Big)+(d-2)\bigg]\quad\text{if $d$ even}.\end{cases}
 \\&\hspace*{3cm}= \begin{cases}
(d-1) (\alpha(1-\alpha))^\frac{d-1}{2}\quad\text{if $d$ odd}\\
  \frac{d}{2}(\alpha(1-\alpha))^{d/2-1}-2(\alpha(1-\alpha))^{d/2}=:\frac{d}{2}\beta^{d/2-1}-2\beta^{d/2}=:f(\beta)\quad\text{if $d$ even}
  \end{cases}
 \\&\hspace*{3cm}\leq\begin{cases}
  (d-1)(1/4)^\frac{d-1}{2}=\frac{d-1}{2^{d-1}}\quad\text{if $d$ odd}\\
  \max_{0\leq\beta\leq \frac{1}{4}} f(\beta)=\frac{d-1}{2^{d-1}}\quad\text{if $d\geq 3$ even}
  \end{cases}
  \\ &\text{where to obtain the last inequality, we have used the fact that} \\&0\leq \beta=\alpha(1-\alpha)\leq \frac{1}{4} ~\text{and}~ f'(\beta)=d\beta^{d/2-2}\Big(\frac{d}{4}-\frac{1}{2}-\beta\Big)\geq 0~~\text{when}~~0\leq\beta\leq \frac{1}{4}~~\text{and}~~ d\geq 3.
\\& \bullet~~ p_{d-1}\leq p_{d-1,d-1}=\begin{cases}
\alpha^{\frac{d-1}{2}}(1-\alpha)^{\frac{d-1}{2}}\quad\text{if $d$ is odd}\\
2 \alpha^\frac{d}{2}(1-\alpha)^\frac{d}{2}\quad\text{if $d$ is even}
\end{cases}
\\&\hspace{3cm}\leq\begin{cases}
(1/4)^\frac{d-1}{2}=\frac{1}{2^{d-1}}\quad\text{if $d$ is odd}\\
2(1/4)^\frac{d}{2}=\frac{1}{2^{d-1}} \quad\text{if $d$ is even}.
\end{cases}
 \end{align*}
This finishes the proof of the estimates $(ii)-(v)$.
\noindent For the consequences: 
for $d=2$, in this case the above estimates $(ii)-(v)$ respectively become: 
\begin{align*}
&p_0\geq \alpha^2+(1-\alpha)^2, \quad p_1\geq \begin{cases}
\frac{1}{2}\quad \text{if}~ \alpha=\frac{1}{2},\\
2\alpha (1-\alpha)\quad\text{if}~ \alpha\neq\frac{1}{2}
\end{cases}=2\alpha(1-\alpha)
\\&\text{and}\quad p_0\leq \alpha (1-\alpha)\Big[\frac{\alpha}{1-\alpha}+\frac{1-\alpha}{\alpha}\Big]=\alpha^2+(1-\alpha)^2, \quad q_1\leq 2\alpha(1-\alpha).
\end{align*}
which imply that $p_0=\alpha^2+(1-\alpha)^2, p_1=2\alpha(1-\alpha)$.

Similarly for $d=3$, estimates $(ii)$ and $(iii)$ respectively become
\[
p_1\geq \begin{cases}
\frac{1}{2}\quad\text{if}~~\alpha=\frac{1}{2},\\
2\alpha(1-\alpha)\text{if}~~\alpha\neq\frac{1}{2}
\end{cases}=2\alpha(1-\alpha),\quad\text{and}\quad p_1\leq 2\alpha(1-\alpha)
\]
from which we deduce that $p_1=2\alpha(1-\alpha)$.
\end{proof}

\section{Numerical simulations}
In this section, we perform several  numerical (sampling) simulations and  calculations  to illustrate the analytical results obtained in the previous sections.
\label{sec: numerics}
Figure \ref{fig:pm theory vs samplings} shows the values of $p_m$ for different values of $d$, for the three cases studied in Theorem \ref{prop: pm}, i.e., when $\beta_k$ are i.i.d. standard normally distributed (GD), uniformly distributed (UD1) and when $\beta_k=a_k-b_k$ with $a_k$ and $\beta_k$ being uniformly distributed (UD2). We compare results obtained from analytical formulas in Theorem \ref{prop: pm} and from samplings. The figure shows that they are in accordance with each other agreeing at least 2 digits after the decimal points. 
\begin{figure}[ht]
\centering
\includegraphics[width = \linewidth]{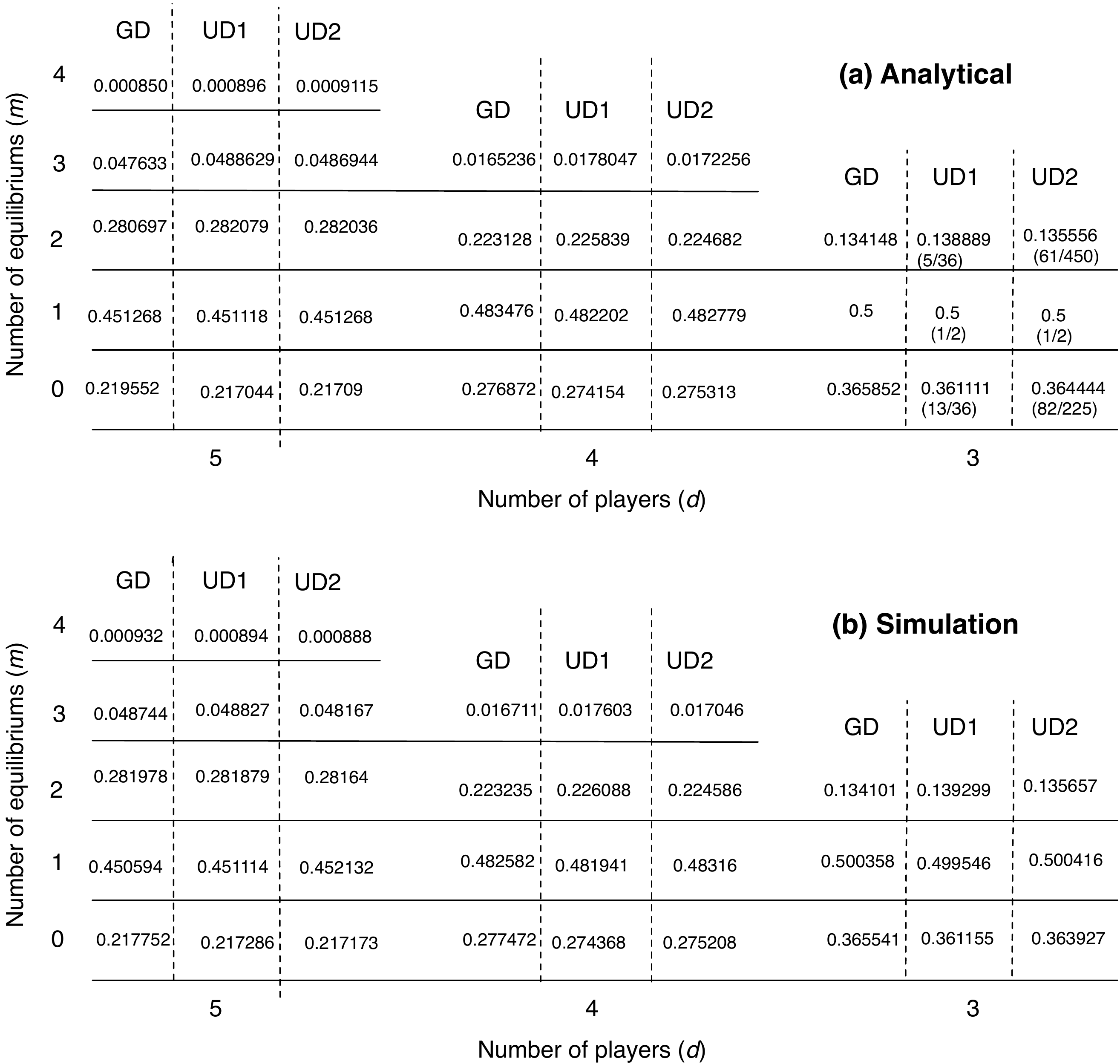}
\caption{Probability of having a certain number ($m$) of internal equilibria,  $p_m$,  for different values of $d$. 
The payoff entries $a_k$ and $b_k$ were drawn from a Gaussian distribution with variance 1 and mean 0 (GD) and from a standard uniform distribution (UD2). We also study the case where $\beta_k = a_k - b_k$ itself  is drawn from a standard uniform distribution (UD1). 
Results are obtained from analytical formulas (Theorem \ref{theo: main theo 2}) (panel a) and are based on sampling   $10^6$ payoff matrices where payoff entries are drawn from the corresponding distributions.
Analytical and simulations results are in accordance with each other. All results are obtained using Mathematica. }
\label{fig:pm theory vs samplings}
\end{figure}

\section{Further discussions and future research}
\label{sec: discussion}
In this paper, we have provided closed formulas and universal estimates for the probability distribution of the number of internal equilibria in a $d$-player two-strategy random evolutionary game. We have explored further connections between the evolutionary game theory and the random polynomial theory as discovered in our previous works~\cite{DH15,DuongHanJMB2016,DuongTranHan2017a}. We believe that the results
reported in the present work open up a new exciting avenue of research in the study of
equilibrium properties of random evolutionary games. We now provide  further discussions on these issues and possible directions for future research.

\textit{Computations of probabilities $p_m$}. Although we have found analytical formulas for $p_m$ it is computationally challenging to deal with them because of their complexity. Obtaining an effective computational method for $p_m$ would be an interesting problem for future investigation. 

\textit{Mean-field approximation theory}. Consider a general polynomial $\Pp$ as given in \eqref{eq: general polynomial} with dependent coefficients, and let $P_m([a,b],n)$ be the probability that $\Pp$ has $m$ real roots in the interval $[a,b]$ (recall that  $n$ is the degree of the polynomial, which is equal  to $d -1$ in Equation \eqref{eq: P1}). The mean-field theory in \cite{SM08} neglects the correlations between the real roots and simply consider that these roots are randomly and independently distributed on the real axis with some local
density $f(t)$ at a point $t$ where $f(t)$ being the density that can be computed from the Edelman-Kostlan theorem \cite{EK95}. Within this approximation in the large $n$ limit, the probability $P_m([a, b],n)$ is given by a non-homogeneous Poisson distribution, see \cite[Section 3.2.2 \& Equation (70)]{SM08},
\begin{equation}
\label{approximate k equilibria}
P_m([a,b],n)\approx\frac{\mu^{m}}{m!}e^{-\mu}\quad\text{where}\quad\mu=\int_a^b f(t)\,dt.
\end{equation}
Now let us apply this theory to approximate the probability $p_m$ that a random $d$-player two-strategy evolutionary game has $m$ internal equilibria. Suppose that $\beta_k$ are i.i.d. normal Gaussians, then, for large $d$, $p_m$ can be approximated by 
\begin{equation*}
p_m\approx\frac{E(d)^m}{m!}e^{-E(d)},
\end{equation*}
where $E(d)$ is the expected number of internal equilibria. According to \cite{DuongTranHan2017a}, for large $d$, we have $E(d)\sim \frac{\sqrt{2d-1}}{2}$. Therefore, we obtain the following approximate formula for $p_m$ for large $d$,
\begin{equation} 
\label{eq: approximation pm}
p_m\approx \frac{1}{m!}\Big(\frac{\sqrt{2d-1}}{2}\Big)^m \ e^{-\frac{\sqrt{2d-1}}{2}}.
\end{equation}
This formula is simple and can be easily computed; however, it is unclear to us how to quantify the errors of approximation since we do not have an efficient method to compute $p_m$ when $d$ is large, see the first point of this section. We leave this topic for future research.
\section*{Acknowledgments} 
This paper was written partly when M. H. Duong was at the Mathematics Institute, University of Warwick and was supported by ERC Starting Grant 335120.  M. H. Duong and T. A. Han acknowledge Research in Pairs Grant (No. 41606) by the London Mathematical Society to support their collaborative research. We would like to thank Dr. Dmitry Zaporozhets for his useful discussions on \cite{Zap06, GKZ2017TM}.
\bibliographystyle{alpha}
\bibliography{GTbib}

\begin{thebibliography}{GRLD09}

\bibitem[Abe24]{able:1824aa}
N.~H. Abel.
\newblock M\'emoire sur les \'equations alg\'ebriques, o\`u l'on d\'emontre
  l'impossibilit\'e de la r\'esolution de l'\'equation g\'en\'erale du
  cinqui\'eme degr\'e.
\newblock {\em Abel's Ouvres}, (1):28--33, 1824.

\bibitem[Axe84]{axelrod:1984yo}
R.~Axelrod.
\newblock {\em The Evolution of Cooperation}.
\newblock Basic Books, New York, 1984.

\bibitem[BCV97]{broom:1997aa}
M.~Broom, C.~Cannings, and G.T. Vickers.
\newblock Multi-player matrix games.
\newblock {\em Bull. Math. Biol.}, 59(5):931--952, 1997.

\bibitem[BP32]{BP32}
A.~Bloch and G.~P{\'o}lya.
\newblock On the {R}oots of {C}ertain {A}lgebraic {E}quations.
\newblock {\em Proc. London Math. Soc.}, S2-33(1):102, 1932.

\bibitem[BR13]{broom2013game}
M.~Broom and J.~Rychtar.
\newblock {\em Game-theoretical models in biology}.
\newblock CRC Press, 2013.

\bibitem[BR16]{Broom2016}
M.~Broom and J.~Rycht{\'a}{\v{r}}.
\newblock {\em Nonlinear and Multiplayer Evolutionary Games}, pages 95--115.
\newblock Springer International Publishing, Cham, 2016.

\bibitem[Bro00]{Broom2000}
M.~Broom.
\newblock Bounds on the number of esss of a matrix game.
\newblock {\em Mathematical Biosciences}, 167(2):163 -- 175, 2000.

\bibitem[Bro03]{broom:2003aa}
M.~Broom.
\newblock The use of multiplayer game theory in the modeling of biological
  populations.
\newblock {\em Comments on Theoretical Biology}, 8:103--123, 2003.

\bibitem[BZ17]{ButezZeitouni2017}
R.~Butez and O.~Zeitouni.
\newblock Universal large deviations for {K}ac polynomials.
\newblock {\em Electron. Commun. Probab.}, 22:Paper No. 6, 10, 2017.

\bibitem[Cur18]{Curtiss1918}
D.~R. Curtiss.
\newblock Recent extentions of descartes' rule of signs.
\newblock {\em Annals of Mathematics}, 19(4):251--278, 1918.

\bibitem[DH15]{DH15}
M.~H. Duong and T.~A. Han.
\newblock On the expected number of equilibria in a multi-player multi-strategy
  evolutionary game.
\newblock {\em Dynamic Games and Applications}, pages 1--23, 2015.

\bibitem[DH16]{DuongHanJMB2016}
M.~H. Duong and T.~A. Han.
\newblock Analysis of the expected density of internal equilibria in random
  evolutionary multi-player multi-strategy games.
\newblock {\em Journal of Mathematical Biology}, 73(6):1727--1760, 2016.

\bibitem[DT17]{DuongTran2017}
M.~H. Duong and H.~M. Tran.
\newblock On the fundamental solution and a variational formulation of a
  degenerate diffusion of kolmogorov type.
\newblock 2017.

\bibitem[DTH17a]{DuongTranHan2017a}
M.~H. Duong, H.~M. Tran, and T.~A. Han.
\newblock On the expected number of internal equilibria in random evolutionary
  games with correlated payoff matrix.
\newblock {\em \url{arXiv:1708.01672}}, 2017.

\bibitem[DTH17b]{DuongTranHan2017c}
M.~H. Duong, H.~M. Tran, and T.~A. Han.
\newblock Statistics of the number of internal equilibria: evolutionary game
  theory meets random polynomial theory.
\newblock {\em In preparation}, 2017.

\bibitem[EK95]{EK95}
A.~Edelman and E.~Kostlan.
\newblock How many zeros of a random polynomial are real?
\newblock {\em Bull. Amer. Math. Soc. (N.S.)}, 32(1):1--37, 1995.

\bibitem[FH92]{fudenberg1992evolutionary}
D.~Fudenberg and C.~Harris.
\newblock Evolutionary dynamics with aggregate shocks.
\newblock {\em Journal of Economic Theory}, 57(2):420--441, 1992.

\bibitem[Fri98]{friedman1998economic}
D.~Friedman.
\newblock On economic applications of evolutionary game theory.
\newblock {\em Journal of Evolutionary Economics}, 8(1):15--43, 1998.

\bibitem[GF13]{Galla2013}
T.~Galla and J.~D. Farmer.
\newblock Complex dynamics in learning complicated games.
\newblock {\em Proceedings of the National Academy of Sciences},
  110(4):1232--1236, 2013.

\bibitem[GKM12]{GOTTLIEB2012}
L.-A. Gottlieb, A.~Kontorovich, and E.~Mossel.
\newblock Vc bounds on the cardinality of nearly orthogonal function classes.
\newblock {\em Discrete Mathematics}, 312(10):1766 -- 1775, 2012.

\bibitem[GKZ17]{GKZ2017TM}
F.~Götze, D.~Koleda, and D.~Zaporozhets.
\newblock Joint distribution of conjugate algebraic numbers: a random
  polynomial approach.
\newblock {\em arXiv:1703.02289}, 2017.

\bibitem[GRLD09]{gross2009generalized}
T.~Gross, L.~Rudolf, S.~A Levin, and U.~Dieckmann.
\newblock Generalized models reveal stabilizing factors in food webs.
\newblock {\em Science}, 325(5941):747--750, 2009.

\bibitem[GT10]{gokhale:2010pn}
C.~S. Gokhale and A.~Traulsen.
\newblock Evolutionary games in the multiverse.
\newblock {\em Proc. Natl. Acad. Sci. U.S.A.}, 107(12):5500--5504, 2010.

\bibitem[GT14]{gokhale2014evolutionary}
C.~S. Gokhale and A.~Traulsen.
\newblock Evolutionary multiplayer games.
\newblock {\em Dynamic Games and Applications}, 4(4):468--488, 2014.

\bibitem[Han13]{HanBook2013}
T.~A. Han.
\newblock {\em Intention Recognition, Commitments and Their Roles in the
  Evolution of Cooperation: From Artificial Intelligence Techniques to
  Evolutionary Game Theory Models}, volume~9.
\newblock Springer SAPERE series, 2013.

\bibitem[HPL17]{HanJaamas2016}
T.A Han, L.~M. Pereira, and T.~Lenaerts.
\newblock Evolution of commitment and level of participation in public goods
  games.
\newblock {\em Autonomous Agents and Multi-Agent Systems}, 31(3):561--583,
  2017.

\bibitem[HS98]{hofbauer:1998mm}
J.~Hofbauer and K.~Sigmund.
\newblock {\em Evolutionary Games and Population Dynamics}.
\newblock Cambridge University Press, Cambridge, 1998.

\bibitem[HTG12]{HTG12}
T.~A. Han, A.~Traulsen, and C.~S. Gokhale.
\newblock On equilibrium properties of evolutionary multi-player games with
  random payoff matrices.
\newblock {\em Theoretical Population Biology}, 81(4):264 -- 272, 2012.

\bibitem[LPV03]{LovaszBook}
L.~Lov{\'a}sz, J.~Pelik{\'a}n, and K.~Vesztergombi.
\newblock {\em Discrete Mathematics: Elementary and Beyond}.
\newblock Undergraduate Texts in Mathematics. Springer New York, 2003.

\bibitem[May01]{may2001stability}
R.~M. May.
\newblock {\em Stability and complexity in model ecosystems}, volume~6.
\newblock Princeton university press, 2001.

\bibitem[MS77]{MacWilliamsBook}
F.J. MacWilliams and N.J.A. Sloane.
\newblock {\em The Theory of Error-correcting Codes}.
\newblock North-Holland mathematical library. North-Holland Publishing Company,
  1977.

\bibitem[MS82]{maynard-smith:1982to}
J.~Maynard-Smith.
\newblock {\em Evolution and the Theory of Games}.
\newblock Cambridge University Press, Cambridge, 1982.

\bibitem[MSP73]{SP73}
J.~Maynard~Smith and G.~R. Price.
\newblock The logic of animal conflict.
\newblock {\em Nature}, 246:15--18, 1973.

\bibitem[Nas50]{nash:1950ef}
John~F. Nash.
\newblock Equilibrium points in n-person games.
\newblock {\em Proc. Natl. Acad. Sci. U.S.A.}, 36:48--49, 1950.

\bibitem[Now06]{nowak:2006bo}
M.~A. Nowak.
\newblock {\em Evolutionary Dynamics}.
\newblock Harvard University Press, Cambridge, MA, 2006.

\bibitem[Pen05]{Pennisi93}
E.~Pennisi.
\newblock How did cooperative behavior evolve?
\newblock {\em Science}, 309(5731):93--93, 2005.

\bibitem[PS10]{Perc2010109}
M.~Perc and A.~Szolnoki.
\newblock Coevolutionary games -- a mini review.
\newblock {\em Biosystems}, 99(2):109 -- 125, 2010.

\bibitem[PSSS09]{Pacheco2009}
J.~M. Pacheco, F.~C. Santos, M.~O. Souza, and B.~Skyrms.
\newblock Evolutionary dynamics of collective action in n-person stag hunt
  dilemmas.
\newblock {\em Proceedings of the Royal Society of London B: Biological
  Sciences}, 276(1655):315--321, 2009.

\bibitem[San10]{sandholm2010population}
W.~H. Sandholm.
\newblock {\em Population games and evolutionary dynamics}.
\newblock MIT press, 2010.

\bibitem[SCP15]{Sasaki2015}
T.~Sasaki, X.~Chen, and M.~Perc.
\newblock Evolution of public cooperation in a monitored society with
  implicated punishment and within-group enforcement.
\newblock {\em Scientific Reports}, 5:1–12, 2015.

\bibitem[Sig10]{sigmund:2010bo}
K.~Sigmund.
\newblock {\em The calculus of selfishness}.
\newblock Princeton Univ. Press, 2010.

\bibitem[SM07]{SM07}
G.~Schehr and S.~N. Majumdar.
\newblock Statistics of the number of zero crossings: From random polynomials
  to the diffusion equation.
\newblock {\em Phys. Rev. Lett.}, 99:060603, Aug 2007.

\bibitem[SM08]{SM08}
G.~Schehr and S.~Majumdar.
\newblock Real roots of random polynomials and zero crossing properties of
  diffusion equation.
\newblock {\em Journal of Statistical Physics}, 132(2):235--273, 2008.

\bibitem[SPS09]{Souza2009}
M.~O. Souza, J.~M. Pacheco, and F.~C. Santos.
\newblock Evolution of cooperation under n-person snowdrift games.
\newblock {\em Journal of Theoretical Biology}, 260(4):581 -- 588, 2009.

\bibitem[SS83]{schuster:1983le}
P.~Schuster and K.~Sigmund.
\newblock Replicator dynamics.
\newblock {\em J. Theo. Biol.}, 100:533--538, 1983.

\bibitem[TJ78]{taylor:1978wv}
P.~D. Taylor and L.~Jonker.
\newblock Evolutionary stable strategies and game dynamics.
\newblock {\em Math. Biosci.}, 40:145--156, 1978.

\bibitem[TP07]{tuyls2007evolutionary}
K.~Tuyls and S.~Parsons.
\newblock What evolutionary game theory tells us about multiagent learning.
\newblock {\em Artificial Intelligence}, 171(7):406--416, 2007.

\bibitem[Zap06]{Zap06}
D.~N. Zaporozhets.
\newblock On the distribution of the number of real zeros of a random
  polynomial.
\newblock {\em Journal of Mathematical Sciences}, 137(1):4525--4530, 2006.

\bibitem[Zee80]{zeeman:1980ze}
E.~C. Zeeman.
\newblock Population dynamics from game theory.
\newblock {\em in {\it Lecture Notes in Mathematics}}, 819:471--497, 1980.

\end{thebibliography}
\end{document}